\newcommand{\eome}{{\rm \nabla\kern-.6em \nabla}}
\newcommand{\diag}{\mathrm{diag}}
\newtheorem{definition}{Definition}
\newtheorem{theo}{Theorem}
\newtheorem{lemma}{Lemma}
\newtheorem{coro}{Corollary}
\newtheorem{remarkTh}{Remark}
\newcommand{\Chi}{\mathfrak{X}}
\newcommand{\supeq}{\geqslant}
\newcommand{\infeq}{\leqslant}
\newcommand{\V}{\mathcal{V}}
\newcommand{\R}{\mathbb{R}}
\newenvironment{proof}[1][Proof]{\textbf{#1.} }{\hfill\rule{0.5em}{0.5em}}
\newenvironment{remark}[0]{\begin{remarkTh}}{\hfill$\square$\end{remarkTh}}
\DeclareMathOperator\He{He}
\DeclareMathOperator\sign{sign}
\renewcommand\epsilon{\varepsilon}
\title{\LARGE \bf Practical Stability Analysis of a Drilling Pipe under Friction with a PI-Controller}
\author{Matthieu~Barreau, Fr\'ed\'eric~Gouaisbaut and Alexandre~Seuret% <-this % stops a space
\thanks{M. Barreau, A. Seuret, F. Gouaisbaut are with LAAS - CNRS, Universit\'e de Toulouse, CNRS, UPS, France e-mail: (mbarreau,aseuret,fgouaisb@laas.fr). This work is supported by the ANR project SCIDiS contract number 15-CE23-0014.}
}
\begin{document}

\maketitle
\thispagestyle{empty}
\pagestyle{empty}

\begin{abstract}
This paper deals with the stability analysis of a drilling pipe controlled by a PI controller. 
%Modeling this mechanism by an infinite dimensional system shows characteristics that are not modeled using any finite dimensional system leading to a far more difficult stability analysis. 
The model is a coupled ODE / PDE and is consequently of infinite dimension. Using recent advances in time-delay systems, we derive a new Lyapunov functional based on a state extension made up of projections of the Riemann coordinates. 
%Two cases will be considered. 
First, we will provide an exponential stability result expressed using the LMI framework. This result is dedicated to a linear version of the torsional dynamic. On a second hand, the influence of the nonlinear friction force, which may generate the well-known stick-slip phenomenon, is analyzed through a new stability theorem.
%This new functional is used on the linear system to get the exponential stability in terms of Linear Matrix Inequalities. For the nonlinear system, practical stability is ensured using the so-called S-variable approach. 
Numerical simulations show the effectiveness of the method and that the stick-slip oscillations cannot be weaken using a PI controller.
\end{abstract}

\section{Introduction}

Studying the behavior of complex machineries is a real challenge since they usually present nonlinear and coupled behaviors \cite{weaver1990vibration}. A drilling mechanism is a very good example of this. Many nonlinear effects can occur on the drilling pipe such as bit bouncing, stick-slip or whirling \cite{challamel2000rock}. These phenomena induce generally some vibrations, increasing the drillpipe fatigue and affecting therefore the life expectancy of the well. The first challenge is then to provide a dynamical model which reflects such behaviors.

Looking at the literature, many models exist from the simplest finite-dimensional ones presented in for instance \cite{canudasdewit:hal-00394990,hinfty} to the more complex but more realistic infinite-dimensional systems. Finite-dimensional systems were an important first step since they showed which characteristics are responsible of vibrations in the well. Nevertheless, they are too far from the physical laws which are expressed in terms of partial differential equation. Then, a coupled finite/infinite dimensional model seems more natural in the context of a drilling pipe and it was proposed in \cite{challamel2000rock,fridman2010bounds,marquez2015analysis} for instance. 

The second challenge was then to design a controller to remove, or at least weaken, these undesirable effects. Many control techniques were applied on the finite-dimensional model from the simple PI controller investigated in \cite{canudasdewit:hal-00394990,christoforou2003fully}, to more advance controllers as sliding mode control \cite{navarro2009} or $H_{\infty}$ \cite{hinfty}. Nevertheless, extending these controllers on the coupled finite/infinite dimensional system is not straightforward.
%The next step was to derive similar tools but on the coupled finite / infinite dimensional system.

The last decade has seen many developments regarding the analysis of infinite-dimensional systems. The semigroup theory, investigated in \cite{tucsnak2009observation} for instance, was a great tool to simplify the proof of existence and uniqueness of a solution to this kind of problems. That leads to an extension of the Lyapunov theory to some classes of Partial Differential Equations (PDE) \cite{bastin2016stability,coron2007control,prieur2016wave}. These advances have given rise to the stability analysis of the linearized infinite-dimensional drilling pipe with a PI controller \cite{ATJdrilling,ATJECC}. Since this kind of controller provides only two degree of freedom, to better enhance the performances, slightly different controllers arose. One of the most famous is the modified PI controller in \cite{tucker1999effective}, but there is also a delayed PI or a flatness based control in \cite{marquez2015analysis}. More complex controllers, coming from the backstepping technique for PDE, originally developed in \cite{krstic2009delay}, were also applied in \cite{basturk2017observer,bresch2014output,roman2016backstepping} for instance.

Nevertheless, these techniques almost always use a Lyapunov argument to conclude and they consequently suffer from the lack of an efficient Lyapunov functional for coupled systems. Recent advances in the domain of time-delay systems \cite{seuret:hal-01065142} have lead to a hierarchy of Lyapunov functionals which are very efficient for coupled Ordinary Differential Equation (ODE) / string equation \cite{besselString}. Since it relies on a state extension, the stability analysis cannot be assessed manually but it translates into an optimization problem expressed using Linear Matrix Inequalities (LMIs) and consequently easily solvable.

This paper takes advantage of this enriched Lyapunov functional to revisit the stability analysis of a PI controlled infinite-dimensional model of a drilling pipe for the torsion only. The first contribution of this article results in Theorem~1, which provides a Linear Matrix Inequality (LMI) to ensure the asymptotic stability of the linear closed-loop system. The second theorem deals with the practical stability of the controlled nonlinear plant. It shows for example that if the linear system is stable, the nonlinear system is also stable. Moreover, it provides an accurate bound on the oscillations during the stick-slip.

The article is organized as follows. Section~2 discusses the different models presented in the literature and enlighten the importance of treating the infinite-dimensional problem. Section~3 is the problem statement. Section~4 is dedicated to the study of the linear system. This is a first step before dealing with the nonlinear system, which is the purpose of Section~5. Section~6 finally proposes simulations to demonstrate the effectiveness of this approach and conclude about the design of a PI controller.

\textbf{Notations:} For a multi-variable function $(x,t) \mapsto u(x,t)$, the notation $u_t$ stands for $\frac{\partial u}{\partial t}$ and $u_x = \frac{\partial u}{\partial x}$. We also use the notations $L^2 = L^2((0, 1); \mathbb{R})$ and for the Sobolov spaces: $H^n = \{ z \in L^2; \forall m \infeq n, \frac{\partial^m z}{\partial x^m} \in L^2 \}$. 
The norm in $L^2$ is $\|z\|^2 =  \int_{\Omega} |z(x)|^2  dx = \left<z,z\right>$. 
For any square matrices $A$ and $B$, the following operations are defined: $\text{He}(A) = A + A^{\top}$ and $\text{diag}(A,B) = \left[ \begin{smallmatrix}A & 0\\ 0 & B \end{smallmatrix} \right]$.
The set of positive definite matrices of size $n$ is denoted by $\mathbb{S}^n_+$ and, for simplicity, a matrix $P$ belongs to this set if $P \succ 0$.

\section{Model Description}
\label{sec:model}

A drilling pipe is a mechanism used to pump oil deep under the surface thanks to a drilling pipe as illustrated in Figure~\ref{fig:drilling}. Throughout the thesis, $\Phi(\cdot,t)$ is the twisting angle along the pipe and then $\Phi(0, t)$ and $\Phi(L, t)$ are the angles at the top and at the bottom of the well respectively. The well is a long metal rode of around one kilometer and consequently, the rotational velocity applied at the top using the torque $u_1(t)$ is different from the one at the bottom. Moreover, the interaction of the bit with the rock at the bottom are modeled by torque $T$, which depends on $\Phi_t(L, t)$. 

As the bit drills the rock, an axial compression of the rod occurs and is denoted $\Psi$.  This compression arises because of the propagation along the rode of the vertical force $u_2$ applied at the top to push up and down the well.

This description leads naturally to two control objectives to prevent the mechanism from breaking. The first one is to maintain the rotational speed at the end of the pipe $\Phi(L, t)$ at a constant value denoted here $\Omega_0$, preventing any twisting of the pipe. The other one is to keep the penetration rate constant such that there is no compression along the rode.

Several models have been proposed in the literature to achieve these control objectives. They are of very different natures and lead to a large variety of analysis and control techniques. The book \cite{marquez2015analysis} (chap. 2) and the survey \cite{saldivar:hal-01425845} provide overviews of these techniques, which are, basically, of four kinds. To better motivate the model used in the sequel, a brief overview of the existing modeling tools is proposed but the reader can refer to \cite{saldivar:hal-01425845} and the original papers to get a better understanding of how the models are constructed.

\begin{figure}
	\centering
	\includegraphics[width=7.5cm]{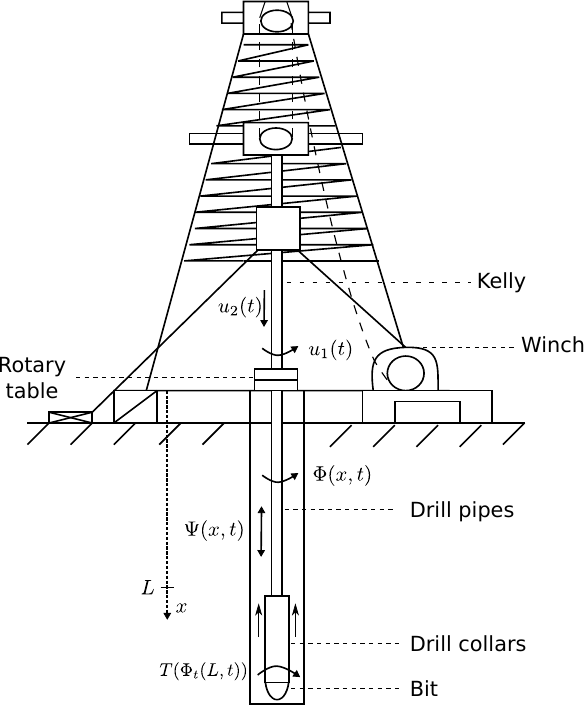}
	\caption{Schematic of a drilling mechanism originally taken from \cite{saldivar2016control}. Data corresponding to physical values are given in Table~\ref{tab:parameters}.}
	\label{fig:drilling}
\end{figure}

\subsection{Lumped Parameter Models (LPM)}

These models are the first obtained in the literature \cite{canudasdewit:hal-00394990,Liu2014,hinfty} and the full mechanism is described by a sequence of harmonic oscillators. They can be classified into two main categories:
\begin{enumerate}
	\item The first kind assumes that the dynamics of the twisting angles $\Phi(0, t) = \Phi_r(t)$ (at the top) and $\Phi(L, t) = \Phi_b(t)$ (at the bottom) are described by two coupled harmonic oscillators. The torque $u_1$ driving the system is applied on the dynamic of $\Phi_r$ and the controlled angle is $\Phi_b$. The axial dynamic is not taken into account in this model. This model can be found in \cite{canudasdewit:hal-00394990,1387580,hinfty} for instance.
	\item The other two degrees of freedom model is described in \cite{Liu2014,78eaa3b25c4343b99eba5cf8f29f04c1} for example. There also are two coupled harmonic oscillators for $\Psi(L,t)$ and $\Phi(L, t)$ representing the axial and torsional dynamics. This model only considers the motions at the end of the pipe and forget about the physics occurring along the rode.
\end{enumerate}
The first class of models can be described by the following set of equations:
\begin{equation} \label{eq:lumpedMassModel}
	\left\{
		\begin{array}{l}
			I_r \ddot{\Phi}_r + \lambda_r (\dot{\Phi}_r - \dot{\Phi}_b) + k (\Phi_r - \Phi_b) + d_r \dot{\Phi}_r = u_1, \\
			I_b \ddot{\Phi}_b + \lambda_b (\dot{\Phi}_b - \dot{\Phi}_r) + k (\Phi_b - \Phi_r) + d_b \dot{\Phi}_b = -T(\dot{\Phi}_b),
		\end{array}
	\right.
\end{equation}
where the parameters are given in Table~\ref{tab:lumpedMassModel}. $T$ is a torque modeled by a nonlinear function of $\dot{\Phi}_b$, it describes the bit-rock interaction\footnote{See \cite{marquez2015analysis} (chap. 3) for a detailed description about various models for $T$.}. A second-order LPM can be derived by only taking into account the two dominant poles of the previous model.
%These preliminary papers focus on this nonlinear law.
%A detailed discussion about the various models of $T$ and how to deal with this phenomenon is presented in \cite{marquez2015analysis} (chap. 3). 

An example of on-field measurements, depicted in Figure~\ref{fig:stickslip}, shows the effect of this torque $T$ on the angular speed. The periodic scheme which arises is called \textit{stick-slip}. It emerges because of the difference between the static and Coulomb friction coefficients making an antidamping on the torque function $T$. Even though the surface angular velocity seems not to vary much, there is a cycle for the downhole one and the angular speed is periodically close to zero, meaning that the bit is stuck to the rock. 
% In Figure~\ref{fig:stickslip}, the frequency is $0.17$Hz and the amplitude is between $10$ and $25$ radian per seconds.

The stick-slip effect appears mostly when dealing with a low desired angular velocity $\Omega_0$ on a controlled drilling mechanism. Indeed, if the angular speed $\Phi_t(L, t)$ is small, the torque provided by the rotary table at $x = 0$ increases the torsion along the pipe. This increase leads to a higher $\Phi_t(L,t)$ but the negative damping on the torque function implies a smaller $T$. Consequently $\Phi_t(L,t)$ increases, this phenomenon is called the \textit{slipping} phase. Then, the control law reduces the torque in order to match $\Phi_t(L, t)$ to $\Omega_0$. Since the torque increases as well, that leads to a \textit{sticking} phase where $\Phi(L,t)$ remains close to $0$. A stick-slip cycle then emerges. Notice that this is not the case for high values of $\Omega_0$ since torque $T$ does not vary much with respect to $\Phi_t(L, t)$ making the system easier to control. In Figure~\ref{fig:stickslip}, one can see that the frequency of the oscillations is $0.17$Hz and its amplitude is between $10$ and $25$ radians per seconds.
\begin{figure}
	\centering
	\includegraphics[width=8.5cm]{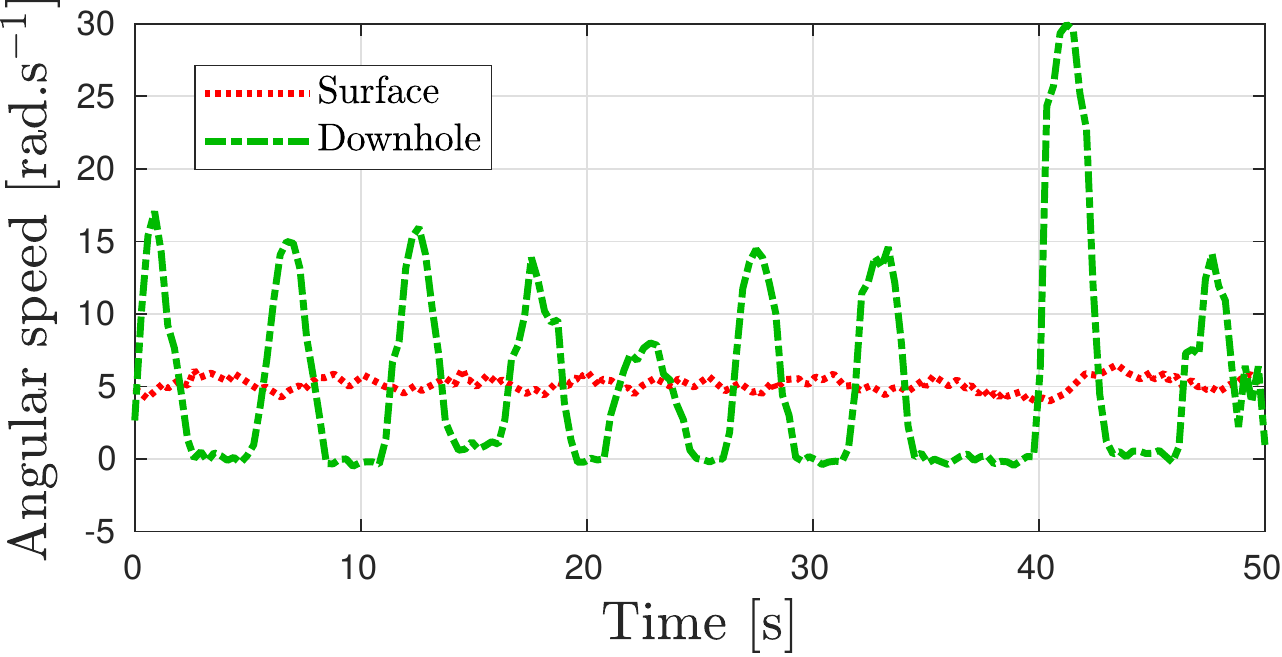}
	\caption{Nonlinear effect on the drilling mechanism due to the friction torque at the bottom of the pipe. These measurements are done on the field \cite{hinfty}. }
	\label{fig:stickslip}
\end{figure} 

Modeling this phenomenon is of great importance as friction effects are quite common when studying mechanical machinery.
In \cite{saldivar:hal-01425845}, the authors compare some models for $T$ and conclude that they produce very similar results. The main characteristic is a decrease of $T$ as $\Phi_t(L,t)$ increases. One standard model refers to the preliminary work of Karnopp \cite{karnopp1985computer} and Armstrong-Helouvry \cite{armstrong1990stick,armstrong1994survey} with an exponential decaying friction term as described in \cite{torqueModel2} for instance. This law is written thereafter where $\theta = \Phi_t(L, \cdot)$ is expressed in rad.s${}^{-1}$:
\begin{equation} \label{eq:torque}
	\begin{array}{l}
		T(\theta) = T_l(\theta) +  T_{nl}(\theta) , \\
		T_l(\theta) = c_b \theta, \\
		T_{nl}(\theta) = T_{sb} \left( \mu_{cb} + \left( \mu_{sb} - \mu_{cb} \right) e^{-\gamma_b |\theta|} \right) \sign(\theta).
	\end{array}
\end{equation}

This model has been used in \cite{1387580,marquez2015analysis} for instance. 
%A smooth approximation of $T_{nl}$ has been later proposed in \cite{basturk2017observer,sagert2013backstepping,zhang2010sliding} and consists in:
%\begin{equation} \label{eq:torqueSmooth}
%	T_{smooth}(\theta) = \frac{2 T_{sb} \mu_{cb}}{\pi} \left( \eta_1 \theta e^{-\eta_2 |\theta|} + \text{arctan}(\eta_3 \theta) \right).
%\end{equation}
%The parameters $\eta_1, \eta_2$ and $\eta_3$ are then obtained from curve fitting with \eqref{eq:torque}. This second model catches the observed behavior around $\theta = 0$ in a better way since it should be linear with respect to the angular velocity $\theta$ but produces an overshoot for a slightly higher $\theta$.
% and is not well-appropriated with our study. 
%The two torques $T_{nl}$ and $T_{smooth}$ are depicted in Figure~\ref{fig:torque} where $T_{max} = T_{sb}\mu_{sb}$ and $T_{min} = T_{sb} \mu_{cb}$. The nonlinear part induces the so-called stick-slip phenomenon.

Notice that an on-field description of this mechanism applied in the particular context of drilling systems is provided in \cite{AARSNES2018712} and concludes that these models are fair approximations of the nonlinear phenomena visible in similar structures. 
%The effectiveness of the two proposed laws to model the torque will be nevertheless shown in the simulation section.

%The torque function is usually divided into two parts: a linear and another nonlinear. In this paper, a first stability condition is derived in the linear case such that the focus is made on the different techniques using Legendre polynomials and Bessel inequality. Then, the nonlinear system is studied with the nonlinearity. That is why the torque function is described at the beginning of each section with the practical consequences.

\begin{figure}
	\centering
	\includegraphics[width=8.5cm]{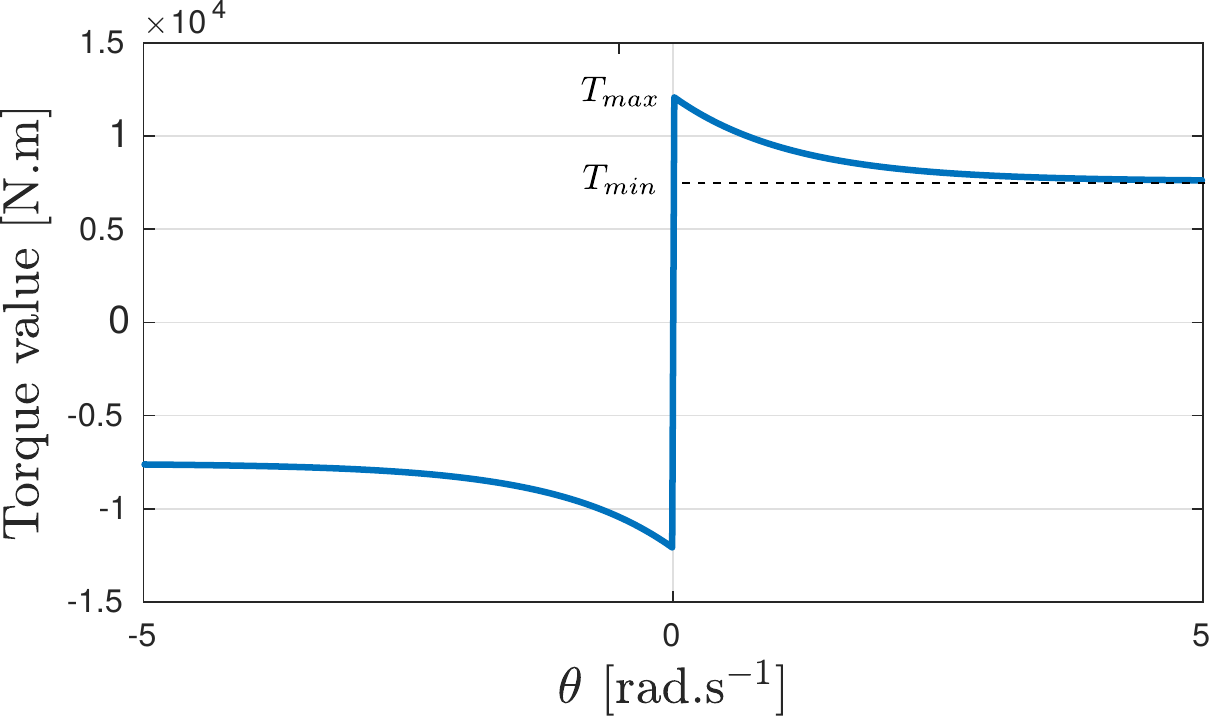}
	\caption{Nonlinear part of the torque. $T_{nl}$ is an approximation of the real friction torque coming from Karnopp's work \cite{karnopp1985computer}.}
	\label{fig:torque}
\end{figure} 

\begin{table}
	\centering
	\begin{tabular}{ll|c}
		\hline
		\multicolumn{2}{l|}{Parameter meaning} & Value \\
		\hline
		$I_r$ & Rotary table and drive inertia & $2122$ kg.m${}^2$ \\
		$I_b$ & Bit and drillstring inertia & $374$ kg.m${}^2$ \\
		$k$ & Drillstring stiffness & $1111$ N.m.rad${}^{-1}$ \\
		$\lambda_r$ & Coupled damping at top & $425$ N.m.s.rad${}^{-1}$\\
		$\lambda_b$ & Coupled damping at bottom & $23.2$ N.m.s.rad${}^{-1}$\\
		$d_r$ & Rotary table damping & $425$ N.m.s.rad${}^{-1}$ \\
		$d_b$ & Bit damping & $50$ N.m.s.rad${}^{-1}$ \\
		\hline
		$\gamma_b$ & Velocity decrease rate & $0.9$ s.rad${}^{-1}$ \\
		$\mu_{cb}$ & Coulomb friction coefficient & $0.5$ \\
		$\mu_{sb}$ & Static friction coefficient & $0.8$ \\
		$c_b$ & Bottom damping constant & $0.03$ N.m.s.rad${}^{-1}$ \\
		$T_{sb}$ & Static / Friction torque & $15~145$ N.m \\
		%\hline
		%$\eta_1$ & Torque decay & $5.5$ \\
		%$\eta_2$ & Velocity decrease rate & $2.2$ s.rad${}^{-1}$ \\
		%$\eta_3$ & Switching time & $3500$ s
	\end{tabular}
	\vspace{0.3cm}
	\caption{Parameters values and their meanings for the lumped parameter model taken from \cite{canudasdewit:hal-00394990,1387580,hinfty}.}
	\label{tab:lumpedMassModel}
\end{table}

At this stage, a lumped parameter model is interesting for its simplicity but does not take into account the infinite-dimensional nature of the problem and, as a consequence, is a good approximation in the case of small vibrations only \cite{saldivar2016control}. 
%A more physical model has been derived using distributed parameter systems.
A deeper modeling can be done considering continuum mechanics and leading to a distributed parameter system. 

\subsection{Distributed parameter models (DPM)}

To tackle the finite-dimensional approximation of the previous model, another one derived from mechanical equations leads to a set of PDEs as described in the works \cite{challamel2000rock,weaver1990vibration}. This model has been enriched in \cite{AARSNES2016239, AARSNES2018712} where the system is presented from a control viewpoint and compared to on-field measurements. In the first papers, the model focuses on the propagation of the torsion only along the pipe. The axial propagation was introduced in the model by \cite{AARSNES2016239,saldivar:hal-01425845}. The new model is made up of two one-dimensional wave equations representing each deformation for $x \in (0, L)$ and $t > 0$:
\begin{subequations} \label{eq:distributed}
	\begin{align}
		\Phi_{tt}(x, t) = c_t^2 \Phi_{xx}(x, t) - \gamma_t \Phi_t(x, t), \label{eq:torsion}  \\
		\Psi_{tt}(x, t) = c_a^2 \Psi_{xx}(x, t) - \gamma_a \Psi_t(x, t), \label{eq:axial}
	\end{align}
	\label{eq:waves}
\end{subequations}
where again $\Phi$ is the twist angle, $\Psi$ is the axial movement, $c_t = \sqrt{G/\rho}$ is the propagation speed of the angle, $\gamma_t$ is the internal damping, $c_a = \sqrt{E/\rho}$ is the axial velocity and $\gamma_a$ is the axial distributed damping. A list of physical parameters and their values is given in Table~\ref{tab:parameters} and Figure~\ref{fig:drilling} helps giving a better understanding of the physical system. In other words, if $\Psi(\cdot, t) = 0$, then there is no compression in the pipe, meaning that the bit is not bouncing; if $\Phi_{tt}(\cdot, t) = 0$, then the angular speed along the pipe is the same, meaning that there is no increase or decrease of the torsion.

\begin{table}
	\centering
	\begin{tabular}{ll|c}
		\hline
		\multicolumn{2}{l|}{Parameter meaning} & Value \\
		\hline
		$L$ & Pipe length & $2000$m \\
		$G$ & Shear modulus & $79.3 \times 10^9$ N.m${}^{-2}$ \\
		$E$ & Young modulus & $200 \times 10^9$ N.m${}^{-2}$ \\
		$\Gamma$ & Drillstring's cross-section & $35 \times 10^{-4}$ m${}^4$ \\
		$J$ & Second moment of inertia & $1.19 \times 10^{-5}$ m${}^4$ \\
		%$I$ & Inertia & $0.095$ kg.m \\
		$I_B$ & Bottom hole lumped inertia & $89$ kg.m${}^2$ \\
		$M_B$ & Bottom hole mass & $40~000$ kg \\
		$\rho$ & Density &  $8000$ kg.m${}^{-3}$ \\
		$g$ & Angular momentum & $2000$ N.m.s.rad${}^{-1}$ \\
		$h$ & Viscous friction coefficient & $200$ kg.s${}^{-1}$ \\
		$\gamma_a$ & Distributed axial damping & $0.69$ s${}^{-1}$ \\
		$\gamma_t$ & Distributed angle damping & $0.27$ s${}^{-1}$ \\
		$\delta$ & Weight on bit coefficient & $1$ m${}^{-1}$
	\end{tabular}
	\vspace{0.3cm}
	\caption{Physical parameters, meanings and their values \cite{AARSNES2016239,saldivar:hal-01425845}.}
	\label{tab:parameters}
\end{table}

For the previous model to be well-posed, top and bottom boundary conditions (at $x = 0$ and $x = L$) must be incorporated in \eqref{eq:distributed}. In this part, only the topside boundary condition is derived. There is a viscous damping at $x = 0$, and consequently a mismatch between the applied torque at the top and the angular speed. The topside boundary condition for the axial part is built on the same scheme and the following conditions are obtained for $t > 0$:
\begin{subequations} \label{eq:topCondition}
	\begin{align}
		G J \Phi_x(0, t) = g \Phi_t(0,t) - u_1(t), \label{eq:topTorsion} \\
		E \Gamma \Psi_x(0, t) = h \Psi_t(0, t) - u_2(t). \label{eq:topAxial}
	\end{align}
\end{subequations}

The downside boundary condition ($x = L$) is more difficult to grasp and is consequently derived later when dealing with a more complex model.

%The downside boundary condition ($x = L$) is derived later for the sake of simplicity.
% but there exist similar downhole boundary conditions which are nevertheless difficult to justify \cite{marquez2015analysis}.

\subsection{Neutral-type time-delay model}

Studying an infinite-dimensional problem stated in terms of PDEs represents a relevant challenge. The equations obtained previously are damped wave equations, but, for the special case where $\gamma_a = \gamma_t = 0$, the system can be converted into a neutral time-delay system as done in \cite{marquez2015analysis}. This new formulation enables to use other tools to analyze its stability as the Lyapunov-Krasovskii Theorem or a frequency domain approach making its stability analysis slightly easier.

Nevertheless, the main drawback of this formulation is the assumption that the damping occurs at the boundary and not all along the pipe. This useful simplification, even if it encountered in many articles \cite{bresch2014output,marquez2015analysis,saldivar2016control}, is known to change in a significant manner the behavior of the system \cite{AARSNES2016239}. Indeed, it appears that without internal damping, the wave equation rephrases easily as a system of transport equations. It is then directly possible to observe with a delay of $c^{-1}$ at the top of the pipe what happened at the bottom of the pipe. This makes the control easier.

\subsection{Coupled ODE/PDE model}

To overcome the issue mentioned previously, a simpler model than the one derived in \eqref{eq:waves} is proposed in \cite{saldivar2016control}, where an harmonic oscillator is used to describe axial vibrations and the model results in a coupled ODE/PDE. \\
A second possibility, reported in \cite{challamel2000rock,marquez2015analysis} for example is to propose a second order ODE as the bottom boundary condition ($x = L$) for $t > 0$:
\begin{subequations} \label{eq:bottomCondition}
	\begin{align}
		GJ \Phi_{x}(L, t) = - I_B \Phi_{tt}(L, t) - T\left( \Phi_t(L, t) \right), \label{eq:bottomTorsion} \\
		E \Gamma \Psi_x(L,t) = - M_B \Psi_{tt}(L, t) - \delta T\left( \Phi_t(L, t) \right), \label{eq:bottomAxial}
	\end{align}
	\label{eq:bottom}
\end{subequations}
where $T$ represents the torque applied on the drilling bit by the rocks, described in equation~\eqref{eq:torque}. Notice that equation \eqref{eq:bottomTorsion} is coming from the conservation of angular momentum where $GJ \Phi_{x}(L, t)$ is the torque coming from the top of the pipe. Equation \eqref{eq:bottomAxial} is the direct application of Newton's second law of motion where $E \Gamma \Psi_x(L, t)$ is the force transmitted from the top to the bit and $\delta T\left( \Phi_t(L, t) \right)$ is the weight on bit due to the rock interaction.
Since \eqref{eq:bottom} is a second order in time differential equation, note that \eqref{eq:waves} together with \eqref{eq:bottom} indeed leads to a coupled ODE/PDE. 

There exist other bottom boundary conditions leading to a more complex coupling between axial and torsional dynamics. They nevertheless introduce delays which requires to have a better knowledge of the drilling bit. To keep the content general, the boundary conditions \eqref{eq:bottom} used throughout this paper is proposed accordingly with \cite{challamel2000rock,saldivar2016control,ATJECC}. \\
As a final remark, using some transformations based on \eqref{eq:torsion}, \eqref{eq:topTorsion} and \eqref{eq:bottomTorsion}, it is possible to derive a system for which backstepping controllers can be used \cite{bresch2014output,sagert2013backstepping}. This is the main reason why this model is widely used today. 

\subsection{Models comparison}

We propose in this subsection to compare the coupled ODE/PDE model and the lumped parameter models for the torsion only. We consider here a linearization of the system for large $\Omega_0$ and consequently we neglect the stick-slip effect by setting $T = 0$. 

First, denote by $\mathcal{H}_{DPM}$ the transfer function from $u_1$ to $\Phi(L,\cdot)$ for the DPM and $\mathcal{H}_{LPM}$ from $u_1$ to $\Phi_b$ for the LPM. We also define by $\mathcal{H}_{LPM2}$ a truncation of $\mathcal{H}_{LPM}$ considering only the two dominant poles. The Bode diagrams of $\mathcal H_{DPM}$, $\mathcal H_{LPM}$ and $\mathcal{H}_{LPM2}$ are drawn in Figure~\ref{fig:bodeModels}. 

Clearly, the LPMs catch the behavior of the DPM at steady states and low frequencies until the resonance, occurring around $\sqrt{k / I_b}$ rad.s${}^{-1}$. From a control viewpoint, the DPM has infinitely many harmonics as it can be seen on the plots but of lower magnitudes and damped as the frequency increases (around $-10$ dB at each decade). The magnitude plots is not sufficient to make a huge difference between the three models. Nevertheless, considering the phase, we see a clear difference. It appears that the DPM crosses the frequency $-180$ many times making the control margins quite difficult to assess. Moreover, that shows that the DPM is harder to control because of the huge difference of behavior after the resonance. They may consequently has a very different behavior when controlled. That is why we focus in this study on the the DPM, even if it is far more challenging to control than the LPMs.

\begin{figure}
	\centering
	\includegraphics[width=8.5cm]{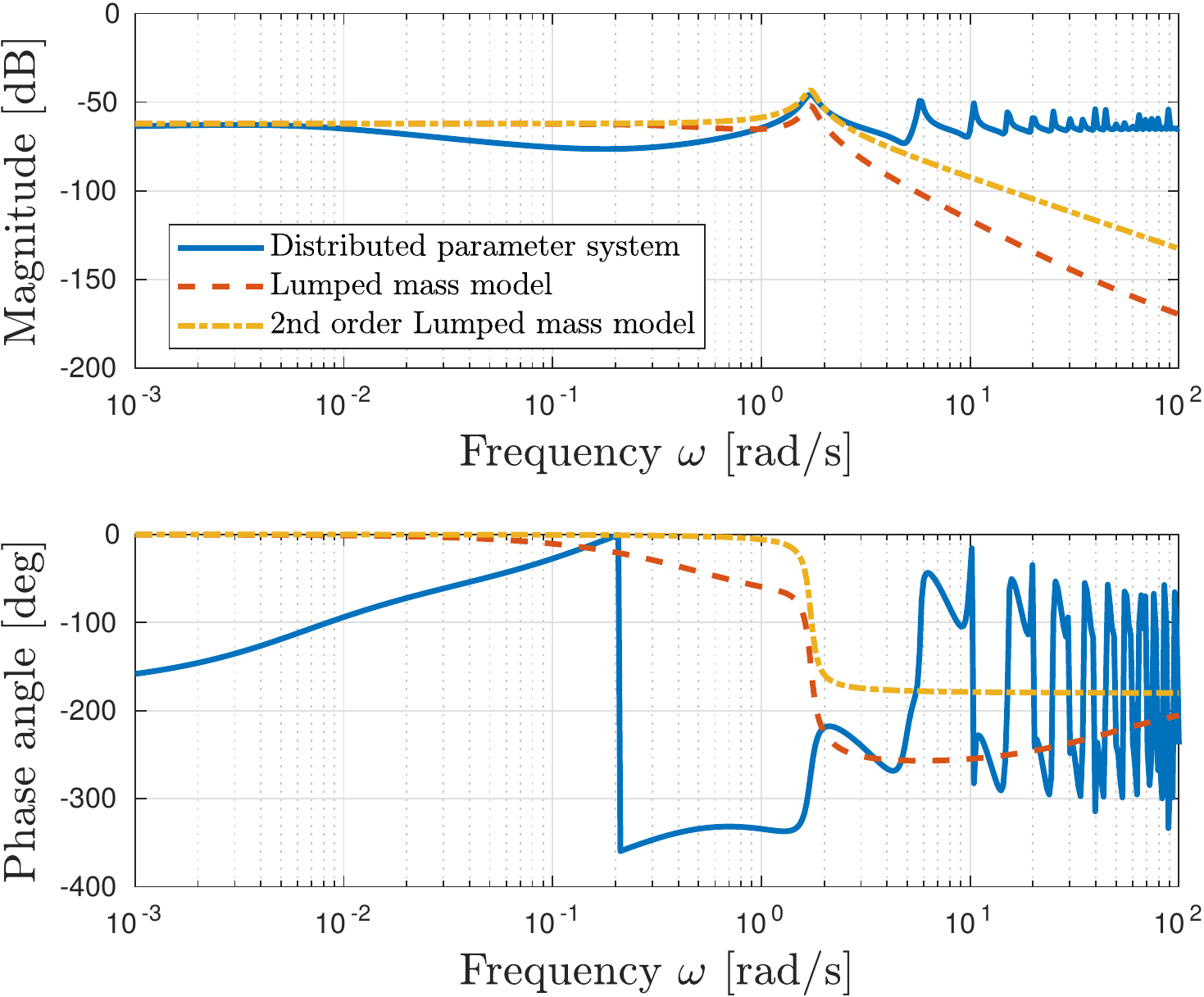}
	\caption[Bode diagram of the linearized models]{Bode diagram of $\mathcal{H}_{DPM}$, $\mathcal{H}_{LPM}$ and $\mathcal{H}_{LPM2}$.}
	\label{fig:bodeModels}
\end{figure} 

\section{Problem statement}

The coupled nonlinear ODE/PDE model derived in the previous subsection can be written as follows for $t > 0$ and $x \in [0, 1]$:
\begin{equation} \label{eq:problem}
	\left\{
		\begin{array}{l}
			\phi_{tt}(x,t) = \tilde{c}_t^2 \phi_{xx}(x,t) - \gamma_t \phi_t(x,t), \\
			\phi_x(0,t) = \tilde{g} \phi_t(0,t) - \tilde{u}_1(t), \\
			\phi_t(1,t) = z_1(t), \\
			\dot{z}_1(t) = - \alpha_1 \phi_x(1,t) - \alpha_2 T(z_1(t)),
		\end{array}
	\right.
\end{equation}
\begin{equation} \label{eq:problem2}
	\left\{
		\begin{array}{l}
			\psi_{tt}(x,t) = \tilde{c}_a^2 \psi_{xx}(x,t) - \gamma_a \psi_t(x,t), \\
			\psi_x(0,t) = \tilde{h} \psi_t(0,t) - \tilde{u}_2(t), \\
			\psi_t(1,t) = y_1(t), \\
			\dot{y}_1(t) = -\beta_1 \psi_x(1,t) - \beta_2 T(z_1(t)),
		\end{array}
	\right.
\end{equation}
where the normalized parameters are given in Table~\ref{tab:normParameters}. Note that the range for the spacial variable is now $x \in [0, 1]$ to ease the calculations. The initial conditions are as follows:
\[
	\left\{
	\begin{array}{ll}
		\phi(x,0) = \phi^0(x), & \phi_t(x,0) = \phi^1(x), \\
		\psi(x, 0) = \psi^0(x), & \psi_t(x,0) = \psi^1(x), \\
		z_1(0) = \phi^1(1), & y_1(0) = \psi^1(1).
	\end{array} 
	\right.
\]

%$z = \left[ z_1 \ z_2 \right]^{\top} = \left[\phi_t(1,\cdot) \ \psi_t(1,\cdot) \right]^{\top}$

\begin{table}
	\centering
	\begin{tabular}{ccc}
		\hline
		% Parameter & Expression & Value & Parameter & Expression & Value \\
		Parameter & Expression & Value \\
		\hline
		$\phi(x,t)$ & $\Phi(x L,t)$ & - \\
		$\psi(x,t)$ & $\Psi(x L,t)$ & - \\
		$\tilde{c}_t$ & $c_t L^{-1}$ & $1.57$ \\
		$\tilde{c}_a$ & $c_a L^{-1}$ & $2.5$ \\
		$\alpha_1$ & $\cfrac{GJ}{L I_B}$ & $5.3$ \\
		$\beta_1$ & $ \cfrac{E \Gamma}{L M_B}$ & $8.75$ \\
		$\alpha_2$ & $I_B^{-1}$ & $1.12 \cdot 10^{-2}$ \\
		$\beta_2$ & $\cfrac{\delta}{M_B}$ & $2.5 \cdot 10^{-5}$ \\
		$\tilde{g}$ & $\cfrac{g}{GJ}$ & $2.1 \cdot 10^{-3}$ \\
		$\tilde{h}$ & $\cfrac{h}{E \Gamma}$ & $2.86 \cdot 10^{-7}$ \\
		$\tilde{u}_1(t)$ & $\cfrac{1}{GJ} u_1(t)$ & - \\
		$\tilde{u}_2(t)$ & $\cfrac{1}{E \Gamma} u_2(t)$ & - \\
	\end{tabular}
	\vspace{0.3cm}
	\caption{Normalized parameters.}
	\label{tab:normParameters}
\end{table}

%A proof of existence and uniqueness of a solution to the linearized version of \eqref{eq:problem}-\eqref{eq:problem2} could be done using Lumer-Philips theorem (see for instance \cite{tucsnak2009observation}, chap. 3). The two necessary conditions for applying this theorem are the range condition on the operator and the dissipativity of \eqref{eq:problem}-\eqref{eq:problem2}. The dissipativity is proven on the linear model while the range condition is assumed to hold. This problem has been widely studied (see \cite{basturk2017observer,bresch2014output,sagert2013backstepping,saldivar2016control,ATJdrilling} among many others) and 
Since the existence and uniqueness of a solution to the previous problem is not the main contribution of this paper, it is assumed in the sequel. This problem has been widely studied (see \cite{basturk2017observer,bresch2014output,sagert2013backstepping,saldivar2016control,ATJdrilling} among many others) and the solution belongs to the following space if the initial conditions $(\phi^0, \phi^1, \psi^0, \psi^1)$ satisfies the boundary conditions (see \cite{bastin2016stability} for more details):
\[
	\begin{array}{l}
		\mathbb{X} = H^1 \times L^2, \ \mathbb{X}_1 = H^2 \times H^1, \\
		(\phi, \phi_t, \psi, \psi_t, z_1, y_1) \in C^0 (\mathbb{X}_1^2 \times \mathbb{R}^2).
		%\cap C^1(\mathbb{X}^2 \times \mathbb{R}^2).
	\end{array}
\]

\begin{remark} One may note that $\theta \mapsto T_{nl}(\theta)$ is not well defined for $\theta = 0$ because of the sign function. Nevertheless, since the nonlinearity acts directly on the variable $z$, it follows that there exists a unique solution to the ODE system in the sense of Filipov \cite{filippov1967classical}. A more detailed discussion on this point is provided in \cite{bisoffi2017global}.\end{remark}

System~\eqref{eq:problem}-\eqref{eq:problem2} is a cascade of two subsystems:
\begin{enumerate}
	\item System \eqref{eq:problem} is a coupled nonlinear ODE/string equation describing the torsion angle $\phi$. 
	\item System \eqref{eq:problem2} is a coupled linear ODE/string equation subject to the external perturbation $T(z_1)$ for $\Psi$.
\end{enumerate}
It appears clearly that the perturbation on the second subsystem depends on the first subsystem in $\phi$. Since they are very similar, the same analysis than the one conducted in this paper applies for the second subproblem. That is why we only study the evolution of the torsion.
%Then, a first step is to study the system on $\phi$ only. In this article, only study on the evolution of the torsion is discussed and $\psi$ is omitted.

\section{Exponential Stability of the Linear System}

System \eqref{eq:problem} is a nonlinear system because of the friction term $T_{nl}$ introduced in \eqref{eq:torque}. Nevertheless, for a high desired angular speed $\Omega_0$, $T_{nl}$ can be assumed constant as seen in Figure~\ref{fig:torque}. 
%Consequently, system \eqref{eq:problem} can be approximated considering a static friction torque, and for any initial condition close enough to the equilibrium, this approximation appears to be valid. 
Moreover, studying this linear system can be seen as a first step before studying the nonlinear system, which relies mostly on the stability theorem derived in this section.

The proposed linear model of $T$ around $\Omega_0 \gg 1$ is 
\begin{equation} \label{eq:torqueLin}
	T(\theta) = c_b \theta + T_{nl}(\Omega_0) = c_b \theta + T_0.
\end{equation}
For high values of $\Omega_0$, $T_0 = T_{nl}(\Omega_0)$ is close to $T_{smooth}(\Omega_0)$ and at the limit when $\Omega_0$ tends to infinity, they are equal. Hence the nonlinear friction term for relatively large angular velocity does not influence much the system.

In our case, the proposed controller for this problem has been studied in a different setting in \cite{canudasdewit:hal-00394990,christoforou2003fully,ATJECC} and is a simple proportional/integral controller based on the single measurement of the angular velocity at the top of the drill (i.e. $\phi_t(0,t)$). The following variables are therefore introduced:
\begin{equation} \label{eq:controller}
	\begin{array}{l}
		\tilde{u}_1(t) = - k_p \left( \phi_t(0,t) - \Omega_0 \right) - k_i z_2(t), \\
		\dot{z}_2(t) = \phi_t(0,t) - \Omega_0,
	\end{array}
\end{equation}
where $k_p$ and $k_i$ are the gains of the PI controller. Combining equations \eqref{eq:problem}, \eqref{eq:torqueLin} and \eqref{eq:controller} leads to:

\begin{equation} \label{eq:problemLin}
	\left\{
		\begin{array}{l}
			\phi_{tt}(x,t) = c^2 \phi_{xx}(x,t) - \gamma_t \phi_t(x,t), \\
			\phi_x(0,t) = (\tilde{g} + k_p) \phi_t(0,t) - k_p \Omega_0 + k_i C_2 Z(t), \\
			\phi_t(1,t) = C_1Z (t), \\
			\dot{Z}(t) = A Z(t) + B \left[ \begin{smallmatrix} \phi_t(0,t) \\ \phi_x(1,t) \end{smallmatrix} \right] + B_2 \left[ \begin{smallmatrix} \Omega_0 \\ T_0 \end{smallmatrix} \right],
		\end{array}
	\right.
\end{equation}
where 
\[
	\begin{array}{lll}
		A \!=\! \left[ \begin{matrix} - \frac{c_b}{I_B} & 0 \\ 0 & 0 \end{matrix} \right], & B_{\hphantom{1}} \!=\! \left[ \begin{matrix} 0 & -\alpha_1 \\ 1 & 0 \end{matrix} \right], & B_2 \!=\! \left[ \begin{matrix} 0 & -\alpha_2 \\ - 1 & 0 \end{matrix} \right], \\
		Z \!=\! \left[ \begin{matrix} z_1 & z_2 \end{matrix} \right]^{\top}\!\!\!\!, & C_1 = \left[ \begin{matrix} 1 & 0 \end{matrix} \right], & C_2 = \left[ \begin{matrix} 0 & 1 \end{matrix} \right].
	\end{array}
\]
To ease the notations, $c = \tilde{c}_t$. We denote by 
\[
	X = (\phi_x, \phi_t, z_1, z_2) \in C^1([0, \infty), \mathcal{H})
\]
with $\mathcal{H} = L^2 \times L^2 \times \mathbb{R}^2$ the infinite dimensional state of system~\eqref{eq:problemLin}. The control objective in the linear case is to achieve the exponential stabilization of an equilibrium point of \eqref{eq:problemLin} in angular speed, i.e. $\phi_t(1)$ is going exponentially to a given constant reference value $\Omega_0$. To this extent, the following norm on $\mathcal{H}$ is introduced:
\[
	\| X \|_{\mathcal{H}}^2 = z_1^2 + z_2^2 + c^2 \| \phi_x(\cdot) \|^2 + \| \phi_t(\cdot) \|^2.
\]
The definition of an equilibrium point of \eqref{eq:problemLin} and its exponential stability follows from the previous definitions.

\begin{definition}$X_{\infty} \in \mathcal{H}$ is an \textbf{equilibrium point} of \eqref{eq:problemLin} if for the trajectory $X \in C^1([0, \infty), \mathcal{H})$ of \eqref{eq:problemLin} with initial condition $X_{\infty}$, the following holds:
\[
	\forall t > 0, \quad \| \dot{X}(t) \|_{\mathcal{H}} = 0.
\]
\end{definition}

\begin{definition}\label{def:stabExpo} Let $X_{\infty}$ be an equilibrium point of \eqref{eq:problemLin}. $X_{\infty}$ is said to be \textbf{$\mu$-exponentially stable} if
\[
	\forall t > 0, \quad \| X(t) - X_{\infty} \|_{\mathcal{H}} \leq \gamma \ \| X_0 - X_{\infty} \|_{\mathcal{H}} \ e^{-\mu t}
\]
holds for $\gamma \supeq 1$, $\mu > 0$ and for any initial conditions $X_0 \in \mathcal{H}$ satisfying the boundary conditions. Here, $X$ is the trajectory of \eqref{eq:problemLin} whose initial condition is $X_0$. 

Hence, $X_{\infty}$ is said to be \textbf{exponentially stable} if there exists $\mu > 0$ such that $X_{\infty}$ is $\mu$-exponentially stable.
\end{definition}

Before stating the main result of this part, a lemma about the equilibrium point is proposed.

\begin{lemma}\label{lem:equilibirum} Assume $k_i \neq 0$, then there exists a unique equilibrium point $X_{\infty} = (\phi^{\infty}_x, \phi^{\infty}_t, z_1^{\infty}, z_2^{\infty}) \in \mathcal{H}$ of \eqref{eq:problemLin} and it satisfies $\phi^{\infty}_t = \Omega_0$.
\end{lemma}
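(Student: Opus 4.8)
The plan is to find the equilibrium point by imposing the stationarity condition $\|\dot X(t)\|_{\mathcal H}=0$ on the trajectory, which forces every time-derivative appearing in \eqref{eq:problemLin} to vanish at the equilibrium. This reduces the coupled ODE/PDE to a purely static boundary-value problem for $\phi^\infty$ together with algebraic conditions on $z_1^\infty,z_2^\infty$. The key structural observation is that setting $\dot z_2=0$ in the integrator dynamics \eqref{eq:controller} gives $\phi_t(0)-\Omega_0=0$, and since at equilibrium $\phi_t$ must be spatially constant (because $\phi_{tt}=0$ combined with the wave dynamics forces $\phi_t$ to be harmonic in a way the boundary conditions pin down), one should extract $\phi_t^\infty\equiv\Omega_0$ directly.

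\medskip
First I would translate the equilibrium definition into pointwise conditions. From $\|\dot X\|_{\mathcal H}^2 = \dot z_1^2+\dot z_2^2+c^2\|\phi_{xt}\|^2+\|\phi_{tt}\|^2 = 0$ we get $\dot z_1=\dot z_2=0$, $\phi_{xt}\equiv 0$ and $\phi_{tt}\equiv 0$ on $(0,1)$. The condition $\phi_{tt}=0$ plugged into the PDE $\phi_{tt}=c^2\phi_{xx}-\gamma_t\phi_t$ yields $c^2\phi_{xx}=\gamma_t\phi_t$, while $\phi_{xt}=0$ says $\phi_x$ is time-independent, so $\phi_t$ is also time-independent; call the equilibrium profiles $\phi_x^\infty(x),\phi_t^\infty(x)$. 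Second, I would use $\dot z_2=0$ in \eqref{eq:controller} to conclude $\phi_t^\infty(0)=\Omega_0$, and use $\dot z_1=0$ in the last line of \eqref{eq:problemLin}, namely $AZ^\infty+B[\phi_t^\infty(0),\phi_x^\infty(1)]^\top+B_2[\Omega_0,T_0]^\top=0$, to obtain two scalar algebraic relations. The first row gives $-\tfrac{c_b}{I_B}z_1^\infty-\alpha_1\phi_x^\infty(1)-\alpha_2 T_0=0$ and the second row gives $\phi_t^\infty(0)-\Omega_0=0$ (consistent with the integrator condition); meanwhile $\phi_t(1)=C_1Z^\infty=z_1^\infty$ forces $z_1^\infty=\phi_t^\infty(1)$.

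\medskip
Next I would solve the ODE $c^2\phi_{xx}^\infty=\gamma_t\phi_t^\infty$ for the profile. Here I expect the cleanest route is to show $\phi_t^\infty$ is actually constant equal to $\Omega_0$: differentiating the relation in $t$ is not available (everything is stationary), so instead I would treat $\phi_t^\infty$ as the unknown and note that the spatial structure, together with the boundary data at $x=0$ and $x=1$, overdetermines it unless $\phi_t^\infty\equiv\Omega_0$. Concretely, integrating $c^2\phi_{xx}^\infty=\gamma_t\phi_t^\infty$ and matching the top boundary condition $\phi_x^\infty(0)=(\tilde g+k_p)\phi_t^\infty(0)-k_p\Omega_0+k_iC_2Z^\infty$ with $\phi_t^\infty(0)=\Omega_0$ collapses the proportional term, leaving $\phi_x^\infty(0)=\tilde g\,\Omega_0+k_i z_2^\infty$. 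The value $z_2^\infty$ is then the unique constant making the whole static boundary-value problem consistent, and its uniqueness is exactly where the hypothesis $k_i\neq 0$ is used, since $k_i$ is the coefficient that lets the integrator state $z_2^\infty$ absorb the residual forcing $T_0$ and close the system.

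\medskip
\textbf{The main obstacle} I anticipate is rigorously proving that the stationary profile satisfies $\phi_t^\infty\equiv\Omega_0$ everywhere on $[0,1]$ rather than only at the endpoints, and correspondingly that $\phi_{xx}^\infty\equiv 0$. If $\gamma_t\neq 0$ this requires showing the only profile compatible with both boundary conditions and the static equation is the constant one; I would handle this by observing that $\phi_t^\infty$ constant forces $\phi_{xx}^\infty=0$ (a linear profile in $\phi_x^\infty$, hence $\phi^\infty$ affine-plus-linear in $x$), then checking the two boundary conditions at $x=0$ and $x=1$ give exactly enough equations to determine the two integration constants and $z_1^\infty,z_2^\infty$ uniquely provided $k_i\neq0$. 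The existence half is then immediate by exhibiting this explicit solution, and uniqueness follows because the linear system of algebraic equations for $(z_1^\infty,z_2^\infty)$ and the integration constants has a nonsingular coefficient matrix precisely when $k_i\neq 0$.
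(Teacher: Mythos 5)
Your proposal follows the same overall route as the paper: impose $\|\dot X\|_{\mathcal H}=0$ to get the pointwise conditions $\dot z_1=\dot z_2=0$, $\partial_t\phi_x\equiv 0$, $\phi_{tt}\equiv 0$; use $\dot z_2=0$ to pin $\phi_t^\infty(0)=\Omega_0$; use the first row of $\dot Z=0$ to fix $\phi_x^\infty(1)$; and use the top boundary condition, with $k_i\neq 0$, to determine $z_2^\infty$ uniquely. However, there is a genuine error in your determination of the static profile. You assert that ``$\phi_t^\infty$ constant forces $\phi_{xx}^\infty=0$,'' which contradicts the equilibrium relation you yourself wrote one line earlier: $0=\phi_{tt}=c^2\phi_{xx}^\infty-\gamma_t\phi_t^\infty$ gives $c^2\phi_{xx}^\infty=\gamma_t\Omega_0$, a \emph{nonzero} constant since $\gamma_t=0.27$ and $\Omega_0>0$ here. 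The correct conclusion is that $\phi_x^\infty$ is affine with slope $\gamma_t\Omega_0/c^2$, and indeed the paper's equilibrium is $\phi_x^\infty(x)=\frac{\gamma_t\Omega_0}{c^2}(x-1)-\frac{c_b}{\alpha_1 I_B}\Omega_0-\frac{\alpha_2}{\alpha_1}T_0$. The point you would construct with $\phi_{xx}^\infty\equiv 0$ satisfies your boundary equations but violates the wave equation, so it is not an equilibrium of \eqref{eq:problemLin}: your existence argument fails, and your ``exactly enough equations'' count is performed on the wrong static ODE. (The paper's proof contains the same slip typographically, writing $\phi_{xx}^\infty=0$, but its stated conclusion and explicit formula carry the correct nonzero slope.)

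Separately, the step you single out as ``the main obstacle'' --- proving $\phi_t^\infty\equiv\Omega_0$ on all of $[0,1]$ rather than only at the endpoints --- is not an obstacle at all, and the vague overdetermination argument you sketch is unnecessary. The equilibrium condition annihilates the term $c^2\|\partial_t\phi_x\|^2$ in $\|\dot X\|^2_{\mathcal H}$, so $\partial_t\phi_x\equiv 0$, and by equality of mixed partials $\partial_x\phi_t=\partial_t\phi_x\equiv 0$: thus $\phi_t^\infty$ is constant in $x$ and equals its value $\Omega_0$ at $x=0$ furnished by $\dot z_2=0$. This is exactly the paper's one-line argument (``$\partial_x\phi_t^\infty=0$ and $\partial_t\phi_t^\infty=0$''), whereas you only extracted the time-independence of $\phi_x$ from $\phi_{xt}\equiv 0$ and overlooked the spatial constancy of $\phi_t$ it simultaneously provides. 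With that observation, and with the profile corrected to the affine one, the remainder of your computation (row one of $\dot Z=0$ giving $\phi_x^\infty(1)$, the top boundary condition giving $z_2^\infty=(\phi_x^\infty(0)-\tilde g\,\Omega_0)/k_i$, unique precisely because $k_i\neq 0$) goes through and recovers the paper's result.
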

\begin{proof}An equilibrium point $X_{\infty}$ of \eqref{eq:problemLin} is such that: $(\phi^{\infty}_{xt}, \phi^{\infty}_{tt}, \frac{d}{dt} z^{\infty}_1, \frac{d}{dt} z^{\infty}_2) = 0$. \\
Since $\frac{d}{dt} z^{\infty}_2 = \phi_t^{\infty}(0, t) - \Omega_0= 0$ and $\partial_x \phi_{t}^{\infty} = 0$ and $\partial_t \phi_t^{\infty} = 0$, $\phi^{\infty}_t = \Omega_0$ holds.\\
We also get from $\phi_{xx}^{\infty} = 0$ and $\partial_t \phi_x^{\infty} = 0$ that $\phi^{\infty}_x$ is a first order polynomial in $x$. Together with the boundary conditions, this system has a unique solution if $k_i \neq 0$ such as:
\[
	X_{\infty} = \left( \phi_x^{\infty}, \Omega_0, \Omega_0, \frac{\phi^{\infty}_x(0) - \tilde{g} \Omega_0}{k_i} \right),
\]
where $\phi_x^{\infty}(x) = \frac{\gamma_t \Omega_0}{c^2}(x-1) - \frac{c_b}{\alpha_1 I_B} \Omega_0 - \frac{\alpha_2}{\alpha_1} T_0$ for $x \in [0, 1]$.
\end{proof}

%As a consequence of Definition~\ref{def:stabExpo} and Lemma~\ref{lem:equilibirum}, the following condition on the torsion angle is satisfied if $X_{\infty}$ is exponentially stable:
%\[
%	\lim_{t \to +\infty} | \phi_t(1,t) - \Omega_0 | = 0,
%\]
%meaning that the first control objective is met.

\subsection{Exponential stability of the closed-loop system~\eqref{eq:problemLin}}

The main result of this part is then stated as follows.
\begin{theo} \label{theo:StabLin}
Let $N \in \mathbb{N}$. Assume there exists $P_N \in \mathbb{S}^{2+2(N+1)}$, $R = \diag(R_1, R_2) \succeq 0, S = \diag(S_1, S_2) \succ 0$, $Q \in \mathbb{S}^2_+$ such that the following LMIs hold:
\begin{equation} \label{eq:LMI}
	\begin{array}{l}
		\Theta_N = c \hspace{0.1cm} \Theta_{1,N} + \Theta_{2,N} - Q_N \prec 0, \\
		P_N + S_N \succ 0,
	\end{array}
\end{equation}
and
\begin{equation} \label{eq:extraCondition}
	\begin{array}{l}
		\Gamma_0 = c R + \frac{\gamma_t}{2} U_0 - Q \succeq 0, \\
		\Gamma_1 = c R + \frac{\gamma_t}{2} U_1 - Q \succeq 0,
	\end{array}
\end{equation}
where
\begin{equation*} 
	\begin{array}{cl}
		\!\!\!\!\!\! \Theta_{1,N} \!\!\!\!& = H_N^{\top} \left[\begin{smallmatrix} S_1 + R_1 & 0 \\ 0 & - S_2 \end{smallmatrix}\right] H_N - G_N^{\top} \left[ \begin{smallmatrix} S_1 & 0 \\ 0 & -S_2-R_2 \end{smallmatrix} \right] G_N, \\
		\!\!\!\!\!\! \Theta_{2,N} \!\!\!\!& = \He \left( D_N^{\top} P_N F_N \right), \\
	\end{array}
\end{equation*}
\begin{equation} \label{eq:defTheo}
	\begin{array}{cl}
		\!\!\!\!\!\! F_N \!\!\!\! &= \left[ \begin{matrix} I_{2+2(N+1)} & 0_{2+2(N+1), 2} \end{matrix} \right], \\
		\!\!\!\!\!\! D_N \!\!\!\! &= \left[ \begin{matrix} J_N^{\top} & c M_{N}^{\top} \end{matrix} \right]^{\!\top}\!\!\!,  
		\quad J_N = \left[ \begin{matrix} A & 0_{2,2(N+1)} & B \end{matrix} \right], \\
		\!\!\!\!\!\! M_{N} \!\!\!\! &= \mathbb{1}_N H_{N}\! -\! \bar{\mathbb{1}}_N G_{N}\! -\! \left[\begin{matrix}  0_{2(N + 1),2} &\!\!\!\! L_{N} &\!\!\!\! 0_{2(N + 1), 2}\end{matrix} \right], \\
		\!\!\!\!\!\! U_0 \!\!\!\! &= \left[ \begin{smallmatrix} 2 S_1 & S_1 + S_2 + R_2 \\ S_1 + S_2 + R_2 & 2 (S_2 + R_2) \end{smallmatrix} \right], \ U_1 = \left[ \begin{smallmatrix} 2 (S_1 + R_1) & S_1 + S_2 + R_1 \\ S_1 + S_2 + R_1 & 2 S_2 \end{smallmatrix} \right], \\
	\end{array}
\end{equation}
\begin{equation*}
	\begin{array}{cl}
		\!\!\!\!\!\! G_{N} \!\!\!\! &= \left[ \begin{matrix} \begin{smallmatrix} c k_i C_2 \\  -c k_i C_2 \end{smallmatrix} & 0_{2, 2(N+1)} & G \end{matrix} \right], \quad G = \left[ \begin{smallmatrix} 1 + c(\tilde{g} + k_p) & 0 \\ 1 - c(\tilde{g} + k_p) & 0 \end{smallmatrix} \right], \\
		%\!\!\!\!\!\! G_{N} \!\!\!\! &= \left[ \begin{matrix} \begin{smallmatrix} - c g C_1 & 0 & 1 + cg \\  0_{1,n+2+2(N+1)} & 1 + c k & 0 \end{smallmatrix}\end{matrix} \right],  \\
		\!\!\!\!\!\! H_{N} \!\!\!\! &= \left[ \begin{matrix} \begin{smallmatrix} C_1 \\  C_1 \end{smallmatrix} & 0_{2, 2(N+1)} & H \end{matrix} \right],  \quad \ \ H = \left[ \begin{smallmatrix} 0 & c  \\ 0 &  -c \end{smallmatrix} \right], \\
		%\!\!\!\!\!\! H_{N} \!\!\!\! &= \left[ \begin{matrix} \begin{smallmatrix} 0_{1,n+2+2(N+1)} & 1 - c k & 0 \\  c g C_1 & 0 & 1 - c g \end{smallmatrix}\end{matrix} \right],  \\
		\!\!\!\!\!\! Q_N \!\!\!\!  &= \diag(0_2, Q, 3 Q, \cdots, (2N+1) Q, 0_{2}), \\
		\!\!\!\!\!\! S_N \!\!\!\!   &= \diag \left( 0_2, S, 3S, \cdots, (2N+1)S \right), \\
		\!\!\!\!\!\! L_N  \!\!\!\!  &= \displaystyle \left[ \ell_{j,k} \Lambda \right]_{j,k \in [0, N]}
		 - \frac{\gamma_t}{2} \diag \left( \left[ \begin{smallmatrix} 1 & 1 \\ 1 & 1 \end{smallmatrix} \right], \dots, \left[ \begin{smallmatrix} 1 & 1 \\ 1 & 1 \end{smallmatrix} \right] \right), \\
		\!\!\!\!\!\! \mathbb{1}_N  \!\!\!\!  &=\left[\begin{smallmatrix} \Lambda  \\ \vdots \\ \Lambda\end{smallmatrix}\right], \quad
		\bar{\mathbb 1}_N = \left[\begin{smallmatrix} \Lambda \\  \vdots \\(-1)^{N} \Lambda \end{smallmatrix}\right], \quad \Lambda = \left[ \begin{matrix} c & 0 \\ 0 & -c \end{matrix} \right],
	\end{array}
\end{equation*}
and 
\[
	\ell_{k,j} = \left\{
		\begin{array}{ll}
			(2j+1)(1 - (-1)^{j+k}), & \text{ if } j \infeq k, \\
			0 & \text{ otherwise},
		\end{array}
	\right.
\]
then the equilibrium point of system \eqref{eq:problemLin} is exponentially stable. As a consequence, $\lim_{t \to +\infty} | \phi_t(1,t) - \Omega_0 | = 0$.
\end{theo}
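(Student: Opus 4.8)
The plan is to translate \eqref{eq:problemLin} into characteristic (Riemann) variables, augment the ODE state with Legendre projections of these variables so that the infinite-dimensional dynamics become a finite linear system driven by boundary traces, and then display a Lyapunov functional whose dissipation rate is \emph{exactly} the matrix $\Theta_N$ of \eqref{eq:LMI}, with the distributed remainder controlled by \eqref{eq:extraCondition}. First I would pass to error coordinates $\bar X = X - X_\infty$; by Lemma~\ref{lem:equilibirum} the equilibrium is unique, and since \eqref{eq:problemLin} is affine the constant forcings ($B_2[\Omega_0;T_0]^\top$ and $k_p\Omega_0$) cancel, so $\bar X$ solves the homogeneous version of \eqref{eq:problemLin}. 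Writing $p=\bar\phi_x$, $q=\bar\phi_t$ and introducing the Riemann invariants $u=q+cp$, $v=q-cp$, the PDE becomes the two damped transport equations $u_t = c\,u_x - \tfrac{\gamma_t}{2}(u+v)$ and $v_t = -c\,v_x - \tfrac{\gamma_t}{2}(u+v)$. A direct computation using $\bar\phi_t(1,t)=\bar z_1$ and the top condition shows the boundary traces satisfy $[u(1,t)\ v(1,t)]^\top = H_N\,\zeta(t)$ and $[u(0,t)\ v(0,t)]^\top = G_N\,\zeta(t)$, where $\zeta = [\,Z^\top, U_0, V_0,\dots,U_N,V_N, \bar\phi_t(0,t), \bar\phi_x(1,t)\,]^\top$ stacks the ODE state, the shifted-Legendre moments $U_k=\int_0^1 u\,L_k$, $V_k=\int_0^1 v\,L_k$, and the two boundary inputs. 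This identifies the blocks $C_1,C_2,G,H$, and the identity $c^2\|\phi_x\|^2+\|\phi_t\|^2 = \tfrac12(\|u\|^2+\|v\|^2)$ shows the Riemann energy is equivalent to $\|\cdot\|_{\mathcal H}$.

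Next I would compute the moment dynamics. Integrating $\dot U_k = \int_0^1 u_t L_k\,dx$ by parts and using $L_k(1)=1$, $L_k(0)=(-1)^k$, together with the Legendre derivative rule $L_k' = \sum_j \ell_{k,j} L_j$ and the damping coupling, reproduces precisely the operator $M_N = \mathbb 1_N H_N - \bar{\mathbb 1}_N G_N - [\,0\ L_N\ 0\,]$ (the boundary traces via $\mathbb 1_N,\bar{\mathbb 1}_N$, the interior derivative coupling and the $\gamma_t$-damping via $L_N$), while $\dot Z = AZ + B[\bar\phi_t(0);\bar\phi_x(1)]$ gives $J_N$. Hence the augmented finite part $\zeta_N = F_N\zeta$ obeys $\dot\zeta_N = D_N\zeta$ with $D_N=[J_N^\top\ cM_N^\top]^\top$ as in \eqref{eq:defTheo}. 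I would then take the candidate
\[
	V(\bar X) = \zeta_N^{\top} P_N \zeta_N + \int_0^1 \begin{bmatrix} u(x)\\ v(x)\end{bmatrix}^{\top} W(x) \begin{bmatrix} u(x)\\ v(x)\end{bmatrix} dx ,
\]
with $W$ affine in $x$ and endpoint values $W(0)=U_0$, $W(1)=U_1$ from \eqref{eq:defTheo}. Positivity $V \geq \alpha\|\bar X\|_{\mathcal H}^2$ follows from $P_N+S_N\succ 0$, $S\succ 0$ and the Bessel--Legendre inequality $\int_0^1 w^\top \Xi w\,dx \geq \sum_{k=0}^N (2k+1)\,m_k^\top \Xi\,m_k$ (valid for $\Xi\succeq0$, $m_k=\int_0^1 w L_k$), which lower-bounds the distributed term by $\zeta_N^\top S_N\zeta_N$ and leaves a strictly positive tail.

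Differentiating along trajectories, the quadratic part gives $2\zeta_N^\top P_N\dot\zeta_N = \zeta^\top \He(D_N^\top P_N F_N)\zeta = \zeta^\top\Theta_{2,N}\zeta$. Integrating the distributed term by parts yields the boundary form $c\,\zeta^\top\Theta_{1,N}\zeta$ (the $S,R$-weighted traces of $u,v$ at $x=0,1$, where the signs $+S_1/-S_1$ on $u$ and $-S_2/+S_2$ on $v$ reflect that $u$ leaves at $x=0$ and $v$ at $x=1$), plus an interior dissipation density $\int_0^1 w^\top\Xi(x)\,w\,dx$ with $\Xi(x)=cR+\tfrac{\gamma_t}{2}W(x)$ gathering the weight slope and the distributed damping. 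Splitting $\Xi = \tfrac12 Q + (\Xi-\tfrac12 Q)$, the first piece collapses via Bessel--Legendre into $-\tfrac12\zeta^\top Q_N\zeta$ for the finite LMI, while $\Gamma_0,\Gamma_1\succeq0$ in \eqref{eq:extraCondition}, being the endpoint values of the affine map $x\mapsto \Xi(x)-Q$, guarantee $\Xi(x)\succeq Q\succ0$ on $[0,1]$, so $\Xi-\tfrac12 Q\succeq\tfrac12 Q$ provides strict decay $-\epsilon\|w\|^2$ of the infinite-dimensional tail. Collecting everything gives $\dot V \leq \zeta^\top\Theta_N\zeta - \epsilon\|w\|^2$; with $\Theta_N\prec0$ and the norm equivalence this upgrades to $\dot V \leq -2\mu V$ for some $\mu>0$, whence Definition~\ref{def:stabExpo} and, in particular, $\phi_t(1,t)\to\Omega_0$.

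The main obstacle is the exact bookkeeping that forces $\dot V$ to coincide with the matrices of \eqref{eq:LMI}: the integration by parts against each $L_k$, combined with the Legendre differentiation coefficients $\ell_{k,j}$ and the cross-coupling induced by the damping $-\tfrac{\gamma_t}{2}(u+v)$, must reproduce $M_N$, $L_N$, $U_0$, $U_1$ and $Q_N$ to the letter, and one must verify that the affine weight $W$ can indeed be realized so that its slope matches $cR$ while its endpoints match $U_0,U_1$. A secondary delicate point is making the strict exponential rate rigorous despite the non-strict conditions $\Gamma_0,\Gamma_1\succeq0$: this rests on the observation that $Q\succ0$ alone forces $\Xi(x)\succ0$, so the distributed tail decays strictly even though the endpoint slacks are only semidefinite.
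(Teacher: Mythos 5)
Your roadmap is, step for step, the paper's own proof: Riemann coordinates $u,v$ (the paper's $\tilde{\chi}$), Legendre moments as a state extension, a functional $\tilde{Z}_N^{\top}P_N\tilde{Z}_N$ plus a weighted $L^2$ term, integration by parts to produce $\Theta_{1,N}$, the moment dynamics (Lemma~\ref{lem:Chi_k}) to produce $\Theta_{2,N}$, the Bessel--Legendre inequality for $Q_N$, convexity for \eqref{eq:extraCondition}, and Lemma~\ref{lemma:epsilon} to conclude. The genuine gap is the one ingredient you deferred to ``verification'': your identification of the distributed weight. You take $W$ affine with $W(0)=U_0$, $W(1)=U_1$; but in the paper $U_0,U_1$ are \emph{not} values of the Lyapunov weight at all. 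The weight is the block-diagonal $\diag(S_1+xR_1,\,S_2+(1-x)R_2)$, and $U_0,U_1$ are the endpoint values of the damping-induced form $U(x)=\He\bigl(W(x)\smallmat{1&1\\1&1}\bigr)$ created by the $-\frac{\gamma_t}{2}\smallmat{1&1\\1&1}\tilde{\chi}$ term of \eqref{eq:chi_t}. Your choice breaks the proof in two concrete ways. First, the verification you flag is impossible: an affine $W$ with those endpoints has slope $U_1-U_0=\smallmat{2R_1 & R_1-R_2\\ R_1-R_2 & -2R_2}$, which never equals $cR$ (its $(2,2)$ block is $-2R_2\preceq 0$). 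Second, and more structurally, $U_0,U_1$ have nonzero off-diagonal blocks, and for a non-diagonal weight the derivative $\frac{d}{dt}\int_0^1\tilde{\chi}^{\top}W\tilde{\chi}\,dx$ contains, besides the symmetric part $\tilde{\chi}^{\top}\He(W\Lambda)\tilde{\chi}_x$ that integrates to boundary terms, the skew part $\tilde{\chi}^{\top}(W\Lambda-\Lambda W)\tilde{\chi}_x=-2c\,b(x)\,(u v_x - v u_x)$, where $b$ is the off-diagonal entry of $W$. This term is neither a boundary trace form nor a quadratic form in $\tilde{\chi}$: integrating it by parts leaves $\int_0^1 b\,u_x v\,dx$, which involves derivatives of the state controlled neither by $\|\cdot\|_{\mathcal{H}}$ nor by any matrix in \eqref{eq:LMI}. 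So with your weight $\dot{V}$ simply does not reduce to $\tilde{\xi}_N^{\top}(c\,\Theta_{1,N}+\Theta_{2,N})\tilde{\xi}_N$ plus an interior density, and \eqref{eq:extraCondition} has nothing to act on.

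The repair is exactly the paper's functional. With the diagonal weight, $W\Lambda=\Lambda W$ is symmetric, so the skew obstruction vanishes; the boundary evaluations give $c\,\diag(S_1+R_1,-S_2)$ at $x=1$ and $c\,\diag(S_1,-(S_2+R_2))$ at $x=0$, which through the trace identities $\tilde{\chi}(1)=H_N\tilde{\xi}_N$, $\tilde{\chi}(0)=G_N\tilde{\xi}_N$ is precisely $c\,\Theta_{1,N}$; the $x$-slope of the weight produces $-c\int_0^1\tilde{\chi}^{\top}R\,\tilde{\chi}\,dx$; and the damping produces $-\frac{\gamma_t}{2}\int_0^1\tilde{\chi}^{\top}U(x)\tilde{\chi}\,dx$ with $U(x)=\smallmat{2a & a+b\\ a+b & 2b}$, $a=S_1+xR_1$, $b=S_2+(1-x)R_2$, which is affine in $x$ with $U(0)=U_0$, $U(1)=U_1$, so \eqref{eq:extraCondition} plus convexity yields $cR+\frac{\gamma_t}{2}U(x)\succeq Q$ on $[0,1]$ and the interior term is dominated by $-\int_0^1\tilde{\chi}^{\top}Q\tilde{\chi}\,dx$. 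From there your argument is the paper's, with one bookkeeping correction: apply Bessel with the full $Q$ (your split $\Xi=\frac12 Q+(\Xi-\frac12 Q)$ would place $-\frac12 Q_N$, not $-Q_N$, in the LMI) and recover the strict rate $\varepsilon_3$ afterwards from $\Theta_N\prec 0$ and $Q\succ 0$, exactly as you suggest in your last remark; Lemma~\ref{lemma:epsilon} then concludes. Everything else in your proposal (error coordinates, boundary traces, moment dynamics $D_N$, norm equivalence, the two-sided bounds on $V$) matches the paper's proof.
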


The proof is given thereafter but some practical consequences and preliminary results are firstly derived.

\begin{remark} \label{rem:hierarchy} The methodology used to derive the previous result has been firstly introduced in \cite{seuret:hal-01065142} to deal with time-delay systems. It has been proved in the former article that this theorem provides a hierarchy of LMI conditions. That means if the conditions of Theorem~\ref{theo:StabLin} are met for $N =N_1 \supeq 0$, then the conditions are also satisfied for all $N > N_1$. Then, the LMIs provide a sharper analysis as the order $N$ of the theorem increases. Nevertheless, the price to pay for a more precise analysis is the increase of the number of decision variables and consequently a higher computational burden. It has been noted in \cite{seuret:hal-01065142} for a time-delay system and in \cite{besselString} for a coupled ODE/string equation system that very sharp results are obtained even for low orders $N$. \end{remark}

%As the problem is linear and using the previous lemma, the regulation problem for $\Omega_0 \neq 0$ and $T_0 \neq 0$ is a consequence of the stabilization problem with $\Omega_0 = 0$ and $T_0 = 0$. For the sequel ,we then assume without loss of generality that $T_0 = 0$ and $\Omega_0 = 0$ and the aim is now to show that the equilibrium point $0_{\mathcal{H}}$ of \eqref{eq:problemLin} is exponentially stable in the sense of Definition~\ref{def:stabExpo}.
As we aim at showing that $X_{\infty}$ is an exponentially stable equilibrium point of system \eqref{eq:problemLin} in the sense of Definition~\ref{def:stabExpo}, the following variable is consequently defined for $t \geq 0$:
\[
	\tilde{X}(t) = X(t) - X_{\infty} = \left( \tilde{\phi}_x(t), \tilde{\phi}_t(t), \tilde{z}_1(t), \tilde{z}_2(t) \right),
\]
where $X$ is a trajectory of system~\eqref{eq:problemLin}. \\
The following lemma, given in \cite{barreauCDC}, provides a way for estimating the exponential decay-rate of system~\eqref{eq:problemLin} as soon as a Lyapunov functional $V$ is known.

\begin{lemma} \label{lemma:epsilon} Let $V$ be a Lyapunov functional for system \eqref{eq:problemLin} and $\mu \geq 0$. Assume there exist $\varepsilon_1, \varepsilon_2, \varepsilon_3 > 0$ such that the following holds:
\begin{equation} \label{eq:epsilon}
	\left\{
	\begin{array}{l}
		 \varepsilon_1 \| \tilde{X} \|^2_{\mathcal H} \infeq V(\tilde{X}) \infeq \varepsilon_2 \| \tilde{X} \|^2_{\mathcal H}, \\
		\dot{V}(\tilde{X}) + 2 \mu V(\tilde{X}) \infeq - \varepsilon_3 \| \tilde{X} \|^2_{\mathcal H},
	\end{array}
	\right.
\end{equation}
then the equilibrium point of system \eqref{eq:problemLin} is $\mu$-exponentially stable. If $\mu = 0$, then it is exponentially stable.
\end{lemma}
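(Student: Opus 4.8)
The plan is to treat this as a textbook Lyapunov argument transported to the infinite-dimensional setting, exploiting the fact that the hypotheses in~\eqref{eq:epsilon} already package everything needed. Because $X \in C^1([0,\infty),\mathcal H)$ and $V$ is a Lyapunov functional for~\eqref{eq:problemLin}, the scalar map $t \mapsto V(\tilde X(t))$ is differentiable along the trajectory, so I may manipulate the differential inequality in~\eqref{eq:epsilon} exactly as for a finite-dimensional ODE. First I would treat the case $\mu > 0$ and introduce the auxiliary function $W(t) = e^{2\mu t} V(\tilde X(t))$. Differentiating and using the second line of~\eqref{eq:epsilon} gives $\dot W(t) = e^{2\mu t}\left( \dot V(\tilde X(t)) + 2\mu V(\tilde X(t)) \right) \infeq - \varepsilon_3 e^{2\mu t} \|\tilde X(t)\|_{\mathcal H}^2 \infeq 0$, so $W$ is non-increasing and therefore $V(\tilde X(t)) \infeq e^{-2\mu t} V(\tilde X(0))$ for all $t \geq 0$. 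This is just the integrated (Gr\"onwall) form of the differential inequality.

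Next I would close the loop with the sandwich bound in the first line of~\eqref{eq:epsilon}. Chaining $\varepsilon_1 \|\tilde X(t)\|_{\mathcal H}^2 \infeq V(\tilde X(t)) \infeq e^{-2\mu t} V(\tilde X(0)) \infeq \varepsilon_2 e^{-2\mu t} \|\tilde X(0)\|_{\mathcal H}^2$ yields $\|\tilde X(t)\|_{\mathcal H}^2 \infeq \frac{\varepsilon_2}{\varepsilon_1} e^{-2\mu t} \|\tilde X(0)\|_{\mathcal H}^2$. Taking square roots and recalling $\tilde X = X - X_\infty$, this is precisely the estimate of Definition~\ref{def:stabExpo} with decay rate $\mu$ and constant $\gamma = \sqrt{\varepsilon_2/\varepsilon_1}$; note that $\gamma \supeq 1$ is automatic since the sandwich forces $\varepsilon_2 \supeq \varepsilon_1$. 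Hence $X_\infty$ is $\mu$-exponentially stable.

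For the case $\mu = 0$ the explicit $2\mu V$ term vanishes, but stability is recovered from the strict negativity $-\varepsilon_3 \|\tilde X\|^2_{\mathcal H}$: combining the second inequality with the upper bound $V(\tilde X) \infeq \varepsilon_2 \|\tilde X\|^2_{\mathcal H}$ gives $\dot V(\tilde X) \infeq -\varepsilon_3 \|\tilde X\|^2_{\mathcal H} \infeq -\frac{\varepsilon_3}{\varepsilon_2} V(\tilde X)$, which is exactly the hypothesis of the previous case with effective rate $\mu' = \frac{\varepsilon_3}{2\varepsilon_2} > 0$. The same argument then applies verbatim, so $X_\infty$ is exponentially stable.

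The computation itself is routine; the only genuine subtlety I would be careful about is the very first step, namely justifying that $t \mapsto V(\tilde X(t))$ is differentiable (or at least absolutely continuous) so that the passage from the differential inequality to its integrated form is legitimate in the infinite-dimensional state space $\mathcal H$. This is where the $C^1$ regularity of the trajectory and the precise meaning of a Lyapunov functional must be invoked; once that is granted, no further functional-analytic difficulty arises, which is exactly why this lemma can be stated abstractly and the real work deferred to constructing a $V$ satisfying~\eqref{eq:epsilon} through the LMIs of Theorem~\ref{theo:StabLin}.
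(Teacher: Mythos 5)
Your proof is correct. Note that the paper itself offers no proof of this lemma: it is imported verbatim from \cite{barreauCDC}, so there is no in-paper argument to compare against; your Gr\"onwall-type comparison argument (monotonicity of $W(t)=e^{2\mu t}V(\tilde X(t))$, then the sandwich $\varepsilon_1\|\tilde X\|^2_{\mathcal H}\infeq V\infeq \varepsilon_2\|\tilde X\|^2_{\mathcal H}$ to get $\gamma=\sqrt{\varepsilon_2/\varepsilon_1}\supeq 1$ in Definition~\ref{def:stabExpo}) is exactly the standard proof this citation stands for, and your flagging of the differentiability of $t\mapsto V(\tilde X(t))$ as the only real subtlety is apt. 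One small imprecision in the $\mu=0$ case: $\dot V\infeq -(\varepsilon_3/\varepsilon_2)V$ gives $\dot V+2\mu'V\infeq 0$ with $\mu'=\varepsilon_3/(2\varepsilon_2)$, which is not \emph{exactly} the hypothesis of the first case, since that hypothesis also demands a residual $-\varepsilon_3'\|\tilde X\|^2_{\mathcal H}$ on the right-hand side. This is harmless, because your first-case argument only ever uses $\dot W\infeq 0$; alternatively, split the bound as $\dot V\infeq -\tfrac{\varepsilon_3}{2}\|\tilde X\|^2_{\mathcal H}-\tfrac{\varepsilon_3}{2\varepsilon_2}V$ to recover the hypothesis verbatim with rate $\mu''=\varepsilon_3/(4\varepsilon_2)$ and margin $\varepsilon_3/2$.
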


\begin{proof}[Proof of Theorem~\ref{theo:StabLin}]
For simplicity, the following notations are used throughout the paper for $k \in \mathbb{N}$: 
\begin{equation} \label{eq:chi}
	\begin{array}{ll}
		\tilde{\chi}(x) = \left[ \begin{matrix} \tilde{\phi}_t(x) + c \tilde{\phi}_x(x) \\ \tilde{\phi}_t(x) - c \tilde{\phi}_x(x) \end{matrix} \right], &
		 \displaystyle \tilde{\Chi}_k(t) = \int_0^1 \tilde{\chi}(x,t) \mathcal{L}_k(x) dx,
	\end{array}
\end{equation}
where $\left\{ \mathcal{L}_k \right\}_{k \in \mathbb{N}}$ is the orthogonal family of Legendre polynomials as defined in Appendix~\ref{sec:bessel}. $\tilde{\Chi}_k$ is then the projection coefficient of $\tilde{\chi}$ along the Legendre polynomial of degree $k$. $\tilde{\chi}$ refers to the Riemann coordinates and presents useful properties as discussed in \cite{besselString} for instance.

\subsubsection{Choice of a Lyapunov functional candidate}

The proposed Lyapunov functional is inspired from \cite{barreauCDC,besselString,safi:hal-013540732} and is divided into two parts as follows:
\begin{equation} \label{eq:lyap}
	V_{N}(\tilde{X}) = \tilde{Z}_N^{\top} P_N \tilde{Z}_N + \V (\tilde{\chi})
\end{equation}
where $\tilde{Z}_N = \left[ \tilde{z}_1 \ \tilde{z}_2 \ \tilde{\Chi}_0^{\top} \ \cdots \ \tilde{\Chi}_{N}^{\top} \right]^{\top}\!\!$ and
\[
	\V (\tilde{\chi}) = \int_0^1 \tilde{\chi}^{\top}(x) \left[ \begin{smallmatrix} S_1 + x R_1 & 0 \\ 0 & S_2 + (1-x) R_2 \end{smallmatrix} \right] \tilde{\chi}(x) dx.
\] 
$\V$ is a traditional Lyapunov functional candidate for a transport system as shown in \cite{coron2007control}.

\subsubsection{Exponential stability} \ 

\textit{Existence of $\varepsilon_1$}: This part is inspired by \cite{barreauCDC}. The inequalities $P_N + S_N \succ 0, R \succeq 0$ imply the existence of $\varepsilon_1 > 0$ such that:
\begin{equation} \label{eq:epsilon1}
	\begin{array}{rcl}
		P_N + S_N & \succeq & \varepsilon_1 \diag \left( I_2, \frac{1}{2} \mathcal{I}_{N} \right), \\
		%\left[ \begin{smallmatrix} S_1 + xR_1 & 0 \\ 0 & S_2 + (1-x)R_2 \end{smallmatrix} \right] 
		S & \succeq & \frac{\varepsilon_1}{2} I_2,
	\end{array}
\end{equation}
with $\mathcal{I}_N = \diag \left\{ (2k+1) I_2\right\}_{k \in [0, N]}$. This statement implies the following on $V_{N}$:
\[
	\begin{array}{rl}
		\!\!\!V_{N}(\tilde{X}) \!\!\!\!&\displaystyle\geq \tilde{Z}_N^{\top} (P_N + S_N) \tilde{Z}_N\! - \!\sum_{k=0}^N (2k+1) \tilde{\Chi}_k^{\top} S \tilde{\Chi}_k \quad \quad \quad \\
		& \displaystyle \hfill + \int_0^1 \!\! \tilde{\chi}(x)^{\top}\! \left( S-\frac{\varepsilon_1}{2} I_2 \right) \tilde{\chi}(x) dx + \frac{\varepsilon_1}{2} \| \tilde{\chi} \|^2 \\
		& \geq \varepsilon_1 \left( \tilde{z}_1^2 + \tilde{z}_2^2 + \frac{1}{2} \| \tilde{\chi} \|^2 \right) \\
		& \displaystyle \hfill - \sum_{k=0}^N (2k+1) \tilde{\Chi}_k^{\top} \tilde{S} \tilde{\Chi}_k + \int_0^1 \tilde{\chi}^{\top}(x) \tilde{S} \tilde{\chi}(x) dx \\
		& \geq \varepsilon_1 \| \tilde{X} \|_{\mathcal{H}}^2,
	\end{array}
\]
where $\tilde{S} = S - \frac{\varepsilon_1}{2} I_2$ which ends the proof of existence of $\varepsilon_1$.

\textit{Existence of $\varepsilon_2$}: Following the same line as previously, the following holds for a sufficiently large $\varepsilon_2 > 0$:
\[
	\begin{array}{rcl}
		P_N & \preceq & \varepsilon_2 \diag \left( I_2, \frac{1}{4} \mathcal{I}_N \right), \\
		\left[ \begin{smallmatrix} S_1 + xR_1 & 0 \\ 0 & S_2 + (1-x)R_2 \end{smallmatrix} \right] 
		%S + R 
		& \preceq & \frac{\varepsilon_2}{4} I_2.
	\end{array}
\]
Using these inequalities in \eqref{eq:lyap} leads to:
\[
	\begin{array}{rl}
		V_{N}(\tilde{X}) \!\!\! & \displaystyle \leq \varepsilon_2 \left( \tilde{z}_1^2 + \tilde{z}_2^2 + \sum_{k=0}^N \frac{2k+1}{4} \tilde{\Chi}_k^{\top} \tilde{\Chi}_k + \frac{1}{4} \| \tilde{\chi} \|^2 \right) \\
		& \leq \varepsilon_2 \left(  \tilde{z}_1^2 + \tilde{z}_2^2 + \frac{1}{2} \| \tilde{\chi} \|^2 \right) = \varepsilon_2 \| \tilde{X} \|_{\mathcal{H}}^2.
	\end{array}
\]
The last inequality is a direct application of Lemma~\ref{lemma:bessel}.% stated in the appendix of this paper.

\textit{Existence of $\varepsilon_3$}: The time derivation of $\tilde{\chi}$ leads to:
\[
	\tilde{\chi}_t(x) = \Lambda \tilde{\chi}_x(x) - \gamma_t \left[ \begin{smallmatrix} 1 \\ 1 \end{smallmatrix} \right] \tilde{\phi}_t(x).
\]
Noting that $\tilde{\phi}_t(x) = \frac{1}{2} \left[ \begin{smallmatrix} 1 & 1 \end{smallmatrix} \right] \tilde{\chi}(x)$, we get:
\begin{equation} \label{eq:chi_t}
	\tilde{\chi}_t(x) = \Lambda \tilde{\chi}_x(x) - \frac{\gamma_t}{2} \left[ \begin{smallmatrix} 1 & 1 \\ 1 & 1 \end{smallmatrix} \right] \tilde{\chi}(x).
\end{equation}
The derivative of $\V$ along the trajectories of \eqref{eq:problem} leads to:
\[
	\dot{\V}(\tilde{\chi}) = 2c \V_1(\tilde{\chi}) - \frac{\gamma_t}{2} \V_2(\tilde{\chi}),
\]	
with
\[
	\begin{array}{cl}
		%\V_1(\chi) \!\!\!&= \displaystyle \int_0^1 \chi^{\top}_x(x) E_{\alpha}(x) \left[\begin{smallmatrix} S_1 + x R_1 & 0 \\ 0 & -S_2 - (1-x) R_2 \end{smallmatrix}\right] \chi(x) dx,\\	
		\V_1(\tilde{\chi}) \!\!\!&= \displaystyle \int_0^1 \tilde{\chi}^{\top}_x(x) \left[\begin{smallmatrix} S_1 + x R_1 & 0 \\ 0 & -S_2 - (1-x) R_2 \end{smallmatrix}\right] \tilde{\chi}(x) dx,\\	
		%\V_2(\chi) \!\!\!&= \displaystyle \int_0^1 \chi^{\top}(x) U(x) \chi(x) dx, \\
		%U(x) \!\!\!&= \left[\begin{smallmatrix} 2 (S_1 + x R_1)  & S_1 + x R_1 \\ \star & 0 \end{smallmatrix}\right] e^{\frac{2 \alpha}{c} x} \\
		%& \hfill + \left[\begin{smallmatrix} 0 & S_2 + (1-x) R_2 \\ \star & 2 ( S_2 + (1-x) R_2) \end{smallmatrix}\right] e^{- \frac{2 \alpha}{c} x}.
		\V_2(\tilde{\chi}) \!\!\!&= \displaystyle \int_0^1 \tilde{\chi}^{\top}(x) U(x) \tilde{\chi}(x) dx, \\
		U(x) \!\!\!&= \left[\begin{smallmatrix} 2 (S_1 + x R_1)  & S_1 + S_2 + x R_1 + (1-x) R_2 \\ S_1 + S_2 + x R_1 + (1-x) R_2 & 2 ( S_2 + (1-x) R_2) \end{smallmatrix}\right].
	\end{array}
\]

An integration by part on $\V_1$ shows that:
\begin{multline*}
	%2\V_1(\chi) = \chi^{\top}(1) E_{\alpha}(1) \left[ \begin{smallmatrix} S_1 + R_1 & 0 \\ 0 & -S_2 \end{smallmatrix} \right] \chi(1)  \\
	%- \chi^{\top}(0) \left[ \begin{smallmatrix} S_1 & 0 \\ 0 & -S_2 - R_2 \end{smallmatrix} \right] \chi(0) \\
	%- \int_0^1 \chi^{\top}(x) E_{\alpha}(x) \left[ \begin{smallmatrix} R_1 & 0 \\ 0 & R_2 \end{smallmatrix} \right] \chi(x) dx - \frac{2 \alpha}{c} \V_{\alpha}(\chi).
	2\V_1(\tilde{\chi}) = \tilde{\chi}^{\top}(1) \left[ \begin{smallmatrix} S_1 + R_1 & 0 \\ 0 & -S_2 \end{smallmatrix} \right] \tilde{\chi}(1)  \\
	- \tilde{\chi}^{\top}(0) \left[ \begin{smallmatrix} S_1 & 0 \\ 0 & -S_2 - R_2 \end{smallmatrix} \right] \tilde{\chi}(0) \\
	- \int_0^1 \tilde{\chi}^{\top}(x) \left[ \begin{smallmatrix} R_1 & 0 \\ 0 & R_2 \end{smallmatrix} \right] \tilde{\chi}(x) dx.
\end{multline*}
The previous calculations lead to the following derivative:
\begin{multline}
	\dot{\V}(\tilde{\chi}) = c \left( \tilde{\chi}^{\top}(1) \left[ \begin{smallmatrix} S_1 + R_1 & 0 \\ 0 & -S_2  \end{smallmatrix} \right] \tilde{\chi}(1) \right.\\
	\left. - \tilde{\chi}^{\top}(0) \left[ \begin{smallmatrix} S_1 & 0 \\ 0 & -S_2 - R_2 \end{smallmatrix} \right] \tilde{\chi}(0) \vphantom{e^{\frac{2 \mu}{c}x}} \right)\\
	- \int_0^1 \tilde{\chi}^{\top}(x) \left( c R + \frac{\gamma_t}{2} U(x) \right) \tilde{\chi}(x) dx.
\end{multline}
\\
By a convexity argument, if \eqref{eq:extraCondition} is verified then $c R + U(x) \succeq Q \succ 0$ holds for $x \in [0, 1]$. Consequently, noticing that $\tilde{\chi}(0) = G_N \tilde{\xi}_N$, $\tilde{\chi}(1) = H_N \tilde{\xi}_N$, we get:
%With the assumptions of Theorem~\ref{theo:StabLin}, we get $c R + U(x) \succeq Q \succ 0$ for $x \in [0, 1]$ and the following holds:
\begin{equation*}
	% \dot{\V}_{\alpha}(\chi) + 2 \alpha \V_{\alpha}(\chi) \leq c \xi^{\top}_N \Theta_{1,N} \xi_N - \int_0^1 \chi^{\top}(x) R^{\circ} \chi(x) dx,
	\dot{\V}(\tilde{\chi}) \leq c\hspace{0.05cm} \tilde{\xi}^{\top}_N \Theta_{1,N} \tilde{\xi}_N - \int_0^1 \tilde{\chi}^{\top}(x) Q \tilde{\chi}(x) dx,
\end{equation*}
with
\[
	\tilde{\xi}_N = \left[ \begin{matrix} \tilde{Z}_N^{\top} & \tilde{\phi}_t(0) & \tilde{\phi}_x(1) \end{matrix} \right]^{\top}.
\]

%Denoting by $W_{N,\alpha} = \dot{V}_{N,\alpha} + 2 \alpha V_{N,\alpha}$ and using Lemma~\ref{lem:Chi_k}, we get the following:
Using Lemma~\ref{lem:Chi_k} and the fact that $\dot{\tilde{Z}}_N = D_N \tilde{\xi}_N$ and $\tilde{Z}_N = F_N \tilde{\xi}_N$, we get the following:
\begin{equation} \label{eq:VNdotLinear}
	\begin{array}{rl}
		%W_{N,\alpha}(X) \!\!\!&= \He \left( \left(\dot{Z}_N + 2 \alpha Z_N\right)^{\top} P_N Z_N \right) \\
 		%	& \hfill + \dot{\V}_{\alpha}(\chi) + 2\alpha \V_{\alpha}(\chi) \\
 		%	& \displaystyle \leq \xi^{\top}_N \Theta_{N} \xi_N + \sum_{k=0}^N (2k+1) \Chi_k R^{\circ} \Chi_k \quad \quad \quad \quad\\
 		%	& \displaystyle \hfill - \int_0^1 \chi^{\top}(x) R^{\circ} \chi(x) dx,
 		\dot{V}_{N}(\tilde{X}) \!\!\!&= \He \left( \dot{\tilde{Z}}_N^{\top} P_N \tilde{Z}_N \right) + \dot{\V}(\tilde{\chi})  \\
 			& \displaystyle \leq \tilde{\xi}^{\top}_N \Theta_{N} \tilde{\xi}_N + \sum_{k=0}^N (2k+1) \tilde{\Chi}_k Q \tilde{\Chi}_k \quad \quad \quad \quad\\
 			& \displaystyle \hfill - \int_0^1 \tilde{\chi}^{\top}(x) Q \tilde{\chi}(x) dx,
	\end{array}
\end{equation}
with $\Theta_N$ defined in \eqref{eq:LMI}. Since $\Theta_N \!\prec\! 0$ and $Q \!\succ\! 0$, we get:
\[
	\begin{array}{rcl}
		\Theta_N & \preceq & - \varepsilon_3 \diag \left( I_2, \frac{1}{2} I_2, \frac{3}{2} I_2, \dots, \frac{2N+1}{2} I_2, 0_2 \right), \\
		Q & \succeq & \frac{\varepsilon_3}{2} I_2.
	\end{array}
\]
Then, $\dot{V}_{N}$ is upper bounded by:
\begin{equation*}
	\begin{array}{rl}
		\dot{V}_{N}(\tilde{X}) \!\!\!& \leq -\varepsilon_3 \left( \tilde{z}_1^2 + \tilde{z}_2^2 + \frac{1}{2} \| \tilde{\chi} \|^2 \right) \hspace{3cm} \\
		& \displaystyle \hfill + \sum_{k=0}^N (2k+1) \tilde{\Chi}_k^{\top} \left( Q - \frac{\varepsilon_3}{2} I_2 \right) \tilde{\Chi}_k \hspace{1cm} \\
		& \displaystyle \hfill - \int_0^1 \tilde{\chi}^{\top}(x) \left( Q - \frac{\varepsilon_3}{2} I_2 \right) \tilde{\chi}(x) dx \\
		& \leq - \varepsilon_3 \| \tilde{X} \|_{\mathcal{H}}^2.
	\end{array}
\end{equation*}
The last inequality comes from a direct application of Bessel's inequality \eqref{eq:bessel}.

\textit{Conclusion :} Using Lemma~\ref{lemma:epsilon}, we indeed get the exponential convergence of all trajectories of \eqref{eq:problemLin} to the desired equilibrium point.
\end{proof}

\subsection{Exponential stability with a guaranteed decay-rate}

It is possible to estimate the decay-rate $\mu$ of the exponential convergence with a slight modification of the LMIs as it is proposed in the following corollary.

\begin{coro}\label{coro:alpha} Let $N \in \mathbb{N}$, $\mu > 0$ and $\gamma_t \geq 0$. If there exist $P_N \in \mathbb{S}^{2+2(N+1)}$, $R = \diag(R_1, R_2) \succeq 0, S = \diag(S_1, S_2) \succ 0, Q \in \mathbb{S}^2_+$ such that \eqref{eq:extraCondition} and the following LMIs hold:
\begin{equation} \label{eq:thetaMu}
	\begin{array}{l}
		\Theta_{N, \mu} = \Theta_N + 2 \mu F_N^{\top} \left( P_N + S_N \right) F_N \prec 0, \\
		P_N + S_N \succ 0,
	\end{array}
\end{equation}
with the parameters defined as in Theorem~\ref{theo:StabLin}, then the equilibrium point of system~\eqref{eq:problemLin} is $\mu$-exponentially stable. \end{coro}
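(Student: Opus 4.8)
The plan is to read this corollary as the decay-rate refinement of Theorem~\ref{theo:StabLin} and to reuse as much of its proof as possible. By Lemma~\ref{lemma:epsilon} it suffices to produce $\varepsilon_1,\varepsilon_2,\varepsilon_3>0$ satisfying \eqref{eq:epsilon}, the only change being that the second inequality now carries the extra term $2\mu V_N$. Since the conditions $P_N+S_N\succ 0$, $S\succ0$, $R\succeq0$ are unchanged, I would first note that the construction of $\varepsilon_1$ and $\varepsilon_2$ (the two-sided bound $\varepsilon_1\|\tilde X\|_{\mathcal H}^2\leq V_N(\tilde X)\leq\varepsilon_2\|\tilde X\|_{\mathcal H}^2$) is verbatim that of Theorem~\ref{theo:StabLin} and needs no modification. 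All the new work is therefore in the dissipation inequality $\dot V_N(\tilde X)+2\mu V_N(\tilde X)\leq-\varepsilon_3\|\tilde X\|_{\mathcal H}^2$.

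To obtain it I would start from the Theorem~\ref{theo:StabLin} computation of $\dot V_N$ — the identities $\dot{\tilde Z}_N=D_N\tilde\xi_N$, $\tilde Z_N=F_N\tilde\xi_N$, the formula for $\dot{\V}$, the integration by parts on $\V_1$, and the boundary substitutions $\tilde\chi(0)=G_N\tilde\xi_N$, $\tilde\chi(1)=H_N\tilde\xi_N$ — and simply add $2\mu V_N=2\mu\,\tilde Z_N^\top P_N\tilde Z_N+2\mu\,\V(\tilde\chi)$. Because $\tilde Z_N=F_N\tilde\xi_N$, the finite-dimensional piece $2\mu\,\tilde Z_N^\top P_N\tilde Z_N$ equals $2\mu\,\tilde\xi_N^\top F_N^\top P_N F_N\tilde\xi_N$; writing $\Theta_N+2\mu F_N^\top P_N F_N=\Theta_{N,\mu}-2\mu F_N^\top S_N F_N$ then promotes the quadratic form to $\tilde\xi_N^\top\Theta_{N,\mu}\tilde\xi_N$ at the price of an extra $-2\mu\,\tilde\xi_N^\top F_N^\top S_N F_N\tilde\xi_N$. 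Since \eqref{eq:thetaMu} gives $\Theta_{N,\mu}\prec0$, I would extract $\varepsilon_3>0$ from it and from $Q\succ0$ exactly as in Theorem~\ref{theo:StabLin}.

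The hard part is to balance this borrowed $-2\mu\,\tilde\xi_N^\top F_N^\top S_N F_N\tilde\xi_N=-2\mu\sum_{k=0}^N(2k+1)\tilde\Chi_k^\top S\tilde\Chi_k$, which involves only the first $N+1$ Legendre modes, against the genuinely infinite-dimensional term $2\mu\,\V(\tilde\chi)$. I would handle it in the spirit of the $\varepsilon_3$ step of Theorem~\ref{theo:StabLin}, keeping the distributed dissipation of $\dot{\V}$ in its sharp form $-\int_0^1\tilde\chi^\top\bigl(cR+\tfrac{\gamma_t}{2}U(x)\bigr)\tilde\chi\,dx$ rather than immediately lower-bounding $cR+\tfrac{\gamma_t}{2}U(x)$ by $Q$. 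The auxiliary conditions \eqref{eq:extraCondition} guarantee $cR+\tfrac{\gamma_t}{2}U(x)\succeq Q$ on $[0,1]$ by convexity, and together with the compatibility $Q\succeq 2\mu S$ (read off from $\Theta_{N,\mu}\prec0$) and Bessel's inequality \eqref{eq:bessel} applied to the tail $\sum_{k>N}$, this should show that $-2\mu\sum_{k=0}^N(2k+1)\tilde\Chi_k^\top S\tilde\Chi_k+2\mu\,\V(\tilde\chi)$, taken with the usual $\sum_{k=0}^N(2k+1)\tilde\Chi_k^\top Q\tilde\Chi_k-\int_0^1\tilde\chi^\top Q\tilde\chi\,dx$ remainder, is nonpositive. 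This is exactly where the decay rate $\mu$ has to be reconciled with the distributed weights $S$, $R$ and $\gamma_t$, and I expect it to be the most delicate point. Once it is settled, $\dot V_N+2\mu V_N\leq\tilde\xi_N^\top\Theta_{N,\mu}\tilde\xi_N\leq-\varepsilon_3\|\tilde X\|_{\mathcal H}^2$, and Lemma~\ref{lemma:epsilon} delivers $\mu$-exponential stability.
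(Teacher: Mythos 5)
Your proof plan is structurally identical to the paper's: $\varepsilon_1,\varepsilon_2$ are indeed taken verbatim from Theorem~\ref{theo:StabLin}, and the dissipation step reduces, exactly as in the paper, to balancing the borrowed finite-dimensional quantity $-2\mu\sum_{k=0}^{N}(2k+1)\tilde{\Chi}_k^{\top}S\tilde{\Chi}_k$ against the infinite-dimensional quantity $2\mu\V(\tilde{\chi})$. The problem is that this balancing step, which you leave as ``this should show \dots is nonpositive'', is precisely where the argument breaks, and the ingredients you list do not close it. Writing $W(x)=\diag(S_1+xR_1,\,S_2+(1-x)R_2)$, the quantity that must be made nonpositive (indeed $\le-\varepsilon_3\|\tilde{\chi}\|^2$) is
\begin{multline*}
\Delta=\int_0^1\tilde{\chi}^{\top}(x)\bigl(2\mu W(x)-Q\bigr)\tilde{\chi}(x)\,dx\\
+\sum_{k=0}^{N}(2k+1)\tilde{\Chi}_k^{\top}\bigl(Q-2\mu S\bigr)\tilde{\Chi}_k .
\end{multline*}
Even granting $Q\succeq2\mu S$, Bessel's inequality \eqref{eq:bessel} applied to $Q-2\mu S$ yields only $\Delta\le 2\mu\int_0^1\tilde{\chi}^{\top}(x)\diag(xR_1,(1-x)R_2)\tilde{\chi}(x)\,dx$, which is nonnegative --- and strictly positive in general when $R\neq0$ --- rather than nonpositive. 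Keeping the dissipation in its sharp form does not help: \eqref{eq:extraCondition} only guarantees the margin $cR+\frac{\gamma_t}{2}U(x)-Q\succeq0$, leaving no room to absorb $2\mu\diag(xR_1,(1-x)R_2)$. The obstruction is structural: for $\tilde{\chi}$ whose energy lies in Legendre modes of degree larger than $N$ (such functions exist with zero boundary traces), all projections $\tilde{\Chi}_k$ vanish, so $\tilde{\xi}_N$ does not see $\tilde{\chi}$, and in those directions one needs $Q\succ2\mu W(x)$ pointwise in $x$ --- a hypothesis that appears nowhere in the corollary. Your side claim that $Q\succeq2\mu S$ can be ``read off'' from $\Theta_{N,\mu}\prec0$ is also unjustified: the relevant diagonal blocks of $\Theta_{N,\mu}$ mix $-(2k+1)Q$ and $2\mu(2k+1)S$ with blocks of $P_N$ (which need not be positive semidefinite, only $P_N+S_N\succ0$) and of $\He(D_N^{\top}P_NF_N)$.

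To be fair, you have put your finger on the soft spot of the published proof itself. The paper disposes of the same residual in one line by ``injecting'' \eqref{eq:VNapprox}; but \eqref{eq:VNapprox} is the lower bound $V_N\ge\tilde{\xi}_N^{\top}F_N^{\top}(P_N+S_N)F_N\tilde{\xi}_N$, whereas replacing $-2\mu\,\tilde{\xi}_N^{\top}F_N^{\top}(P_N+S_N)F_N\tilde{\xi}_N$ by $-2\mu V_N$ inside an upper bound on $\dot V_N$ requires the reverse (upper) bound; the nonnegative residual $2\mu\bigl(\V(\tilde{\chi})-\sum_{k=0}^{N}(2k+1)\tilde{\Chi}_k^{\top}S\tilde{\Chi}_k\bigr)$ is silently dropped there. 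So your proposal reproduces the paper's approach together with its gap rather than repairing it. A rigorous version needs reinforced hypotheses, under which your Bessel-splitting scheme does go through: for instance $R=0$ together with $Q\succ2\mu S$, or more generally $Q\succeq2\mu S$ together with a $\mu$-strengthened \eqref{eq:extraCondition}, namely $cR+\frac{\gamma_t}{2}U_0-Q-2\mu\diag(0,R_2)\succ0$ and $cR+\frac{\gamma_t}{2}U_1-Q-2\mu\diag(R_1,0)\succ0$; affineness in $x$ then gives $cR+\frac{\gamma_t}{2}U(x)-Q-2\mu\diag(xR_1,(1-x)R_2)\succeq\varepsilon I_2$ on $[0,1]$, and the existence of $\varepsilon_3$ follows as in Theorem~\ref{theo:StabLin}.
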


\begin{proof}
To prove the exponential stability with a decay-rate of at least $\mu > 0$, we use Lemma~\ref{lemma:epsilon}. Similarly to the previous proof, we have the existence of $\varepsilon_1, \varepsilon_2 > 0$. The existence of $\varepsilon_3$ is slightly different. First, note that for the Lyapunov functional candidate \eqref{eq:lyap}, we get:
\[
	\begin{array}{rl}
		\displaystyle V_N(\tilde{X}_{\phi}) \!\!\!\!&\displaystyle\geq \tilde{Z}_N^{\top} P_N \tilde{Z}_N + \int_0^1 \tilde{\chi}^{\top}(x) S \tilde{\chi}(x) dx \\
		&\displaystyle\geq \tilde{Z}_N^{\top} P_N \tilde{Z}_N + \sum_{k=0}^N (2k+1) \tilde{\Chi}_k^{\top} S \tilde{\Chi}_k.
	\end{array}
\]
This inequality was obtained using equation~\eqref{eq:bessel}. Using the notations of the previous theorem yields:
\begin{equation} \label{eq:VNapprox}
	V_N(\tilde{X}_{\phi}) \geq \tilde{\xi}_N^{\top} F_N^{\top} \left( P_N + S_N \right) F_N \tilde{\xi}_N.
\end{equation}
Coming back to \eqref{eq:VNdotLinear} and
%, the following holds:
%\[
% 	\dot{V}_{N}(\tilde{X}_{\phi}) \leq \tilde{\xi}^{\top}_N \Theta_{N} \tilde{\xi}_N + \sum_{k=0}^N (2k+1) \tilde{\Chi}_k^{\top} Q \tilde{\Chi}_k  - \int_0^1 \tilde{\chi}^{\top}(x) Q \tilde{\chi}(x) dx,
%\]
using \eqref{eq:thetaMu} leads to:
\begin{multline*}
 	\dot{V}_{N}(\tilde{X}_{\phi}) \leq \tilde{\xi}^{\top}_N \Theta_{N,\mu} \tilde{\xi}_N - 2 \mu \tilde{\xi}_N^{\top} \left( F_N^{\top} P_N F_N + S_N \right) \tilde{\xi}_N \\ + \sum_{k=0}^N (2k+1) \tilde{\Chi}_k^{\top} Q \tilde{\Chi}_k  - \int_0^1 \tilde{\chi}^{\top}(x) Q \tilde{\chi}(x) dx.
\end{multline*}
Injecting inequality \eqref{eq:VNapprox} into the previous inequality leads to:
\begin{multline*}
 	\dot{V}_{N}(\tilde{X}_{\phi}) + 2 \mu V_N(\tilde{X}_{\phi}) \leq \tilde{\xi}^{\top}_N \Theta_{N,\mu} \tilde{\xi}_N  \\
 	+ \sum_{k=0}^N (2k+1) \tilde{\Chi}_k^{\top} Q \tilde{\Chi}_k  - \int_0^1 \tilde{\chi}^{\top}(x) Q \tilde{\chi}(x) dx.
\end{multline*}
Using the same techniques than for the previous proof leads to the existence of $\varepsilon_3 > 0$ such that:
\[
	\dot{V}_N(\tilde{X}_{\phi}) + 2 \mu V_N(\tilde{X}_{\phi}) \leq - \varepsilon_3 \| \tilde{X}_{\phi} \|_{\mathcal{H}}^2.
\]
Lemma~\ref{lemma:epsilon} concludes then on the $\mu$-exponential stability.
\end{proof}

\begin{remark} Wave equations can sometimes be modeled as neutral time-delay systems \cite{barreauInputOutput,marquez2015analysis}. This kind of system is known to possess some necessary stability conditions as noted in \cite{barreauInputOutput,bastin2016stability}. In the book \cite{bastin2016stability}, the following criterion is derived:
\begin{equation} \label{eq:alphamax}
	\mu \leq \frac{c}{2} \log \left| \frac{1+c (\tilde{g} + k_p)}{1-c (\tilde{g} + k_p)} \right| = \mu_{max}.
\end{equation}
This result implies that there exists a maximum decay-rate and if this maximum is negative, then the system is unstable. The LMI $\Theta_{N}^{\mu} \prec 0$ contains the same necessary condition, meaning that the neutral aspect of the system is well-captured. \end{remark}

\subsection{Strong stability against small delay in the control}
\label{sec:kp}

A practical consequence of the neutral aspect of system~\eqref{eq:problemLin} is that it is very sensitive to delay in the control \eqref{eq:controller}.
% and in the measure \cite{hale2001effects}. 
Indeed, if the control is slightly delayed, a new necessary stability condition (equivalent to \eqref{eq:alphamax}) coming from frequency analysis can be derived (Cor 3.3 from \cite{hale2001effects}):
\[
	\left|\frac{1 - c \tilde{g}}{1 + c \tilde{g}}\right| + 2 \left|\frac{c k_p}{1 + c \tilde{g}}\right| < 1.
\]
%\[
%	\left| \frac{\left(1+\tilde{c}_t \tilde{g}\right) (1 + \tilde{c}_t) \right| + \left| (1 + \tilde{c}_t k_p)}{} \right|  > 1.
%\]
It is more restrictive and taking $k_p \neq 0$ practically leads to a decrease of the robustness (even if some other performances might be enhanced). This phenomenon has been studied in many articles \cite{helmicki1991ill,morgul1995stabilization,ATJdrilling}. Hence, to be robust to delays in the loop, one then needs to ensure the following:
\[
	0 \leq k_p \leq \frac{1}{2c} \left( |1 + c \tilde{g}| - |1 - c \tilde{g}| \right) = \tilde{g} = 2.1 \cdot 10^{-3}.
\]
This inequality on $k_p$ comes when considering the infinite dimensional problem and does not arise when dealing with any finite dimensional model. That point enlightens that it is more realistic to consider the infinite dimensional problem. For more information on that point, the interested reader can refer to \cite{barreauThese}.

\section{Practical Stability of system~\eqref{eq:problem}-\eqref{eq:controller}}

The experiments conducted previously shows for $\Omega_0$ large, the trajectory of the nonlinear system~\eqref{eq:problem}-\eqref{eq:controller} goes exponentially to $X_{\infty}$, so does the linear system. In other words, the previous result is a local stability test for the nonlinear system in case of large desired angular velocity $\Omega_0$ and does not extend straightforward to a global analysis. 
%The experiments conducted previously shows that there exists a neighborhood $\mathcal{N}_{X_{\infty}}$ of $X_{\infty}$ such that for any initial condition $X_0 \in \mathcal{N}_{X_{\infty}}$, the trajectory of the nonlinear system~\eqref{eq:problem}-\eqref{eq:controller} goes exponentially to $X_{\infty}$. In other words, the previous result is a local stability test for the nonlinear system and does not extend straightforward to a global analysis. 

In general, for a nonlinear system, ensuring the global exponential stability of an equilibrium point could be complicated. In many engineering situations, global exponential stability is not the requirement. Indeed, considering uncertainties in the system or nonlinearities, it is far more reasonable and acceptable for engineers to ensure that the trajectory remains close to the equilibrium point. This property is called dissipativeness in the sense of Levinson \cite{levinson1944transformation} or practical stability (also called globally uniformly ultimately bounded in \cite{khalil1996nonlinear}).

%\begin{definition} Let $X_{bound} > 0$. System~\eqref{eq:problem} is \textbf{practically stable} if for any initial condition $X_0 \in \mathcal{H}$, there exists $T > 0$ such that the following holds:
%\begin{equation} \label{eq:practical}
%	\forall t \geq T, \quad \| X(t) - X_{\infty} \|_{\mathcal{H}} \leq X_{bound}.
%\end{equation}
%\end{definition}
\begin{definition} 
System~\eqref{eq:problem} is \textbf{practically stable} if there exists $X_{bound} \geq 0$ such that for $X$ the solution of \eqref{eq:problem} with initial condition $X_0 \in \mathcal{H}$:
\begin{equation} \label{eq:practical}
	\forall \eta > 0, \exists T_{\eta} > 0, \forall t \geq T_{\eta}, \quad \| X(t) - X_{\infty} \|_{\mathcal{H}} \leq X_{bound} + \eta.
\end{equation}
\end{definition}

Saying that system~\eqref{eq:problem}-\eqref{eq:controller} is practically stable means that there exists $T_{\eta} > 0$ such that for any $x \in [0, 1]$, $\phi_t(x,t)$ stays close to $\Omega_0$ for $t \geq T_{\eta}$. This property has already been applied to a drilling system in \cite{saldivar2016control} for instance. In the nonlinear case, the aim is then to design a control law reducing the amplitude of the stick-slip when it occurs. 

The objective of this section is to derive an LMI test ensuring the practical stability of nonlinear system~\eqref{eq:problem} together with the control defined in \eqref{eq:controller}:
\begin{equation} \label{eq:problemNonLin}
	\left\{
		\begin{array}{l}
			\phi_{tt}(x,t) = c^2 \phi_{xx}(x,t) - \gamma_t \phi_t(x,t), \\
			\phi_x(0,t) = (\tilde{g} + k_p) \phi_t(0,t) - k_p \Omega_0 + k_i C_2 Z(t), \\
			\phi_t(1,t) = C_1Z (t), \\
			\dot{Z}(t) = A Z(t) + B \left[ \begin{smallmatrix} \phi_t(0,t) \\ \phi_x(1,t) \end{smallmatrix} \right] + B_2 \left[ \begin{smallmatrix} \Omega_0 \\ T_{nl}(z_1(t)) \end{smallmatrix} \right],
		\end{array}
	\right.
\end{equation}

\begin{remark} To simplify the writing, $\phi$ can both means the solution of the linear or nonlinear system depending on the context. In this part, it refers to a solution of nonlinear system~\eqref{eq:problemNonLin}. \end{remark}

\subsection{Practical stability of system~\eqref{eq:problemNonLin}}

The idea behind practical stability is to embed the static nonlinearities by the use of suitable sector conditions, as it has been done for the saturation for instance \cite{tarbouriech2011stability}. Then, the use of robust tools will lead to some LMI tests ensuring the practical stability.
%Practical stability relies mostly on the bounding of $T_{nl}$, that is why the following lemma presented three sector conditions related to the nonlinear friction term \eqref{eq:torque}.

%\begin{lemma} \label{lem:sectorsConditions} The following inequalities are sectors conditions for \eqref{eq:torque}:
%\begin{equation} \label{eq:sectorsConditions}
%	\forall i \in \{ 1, 2, 3 \}, \quad \bar{\xi}_N \ \Pi_i \ \bar{\xi}_N \leq 0,
%\end{equation}
%where 
%\[
%	\begin{array}{clcl}
%		\!\! \bar{\xi}_N \!\!\!\!& = \left[ \begin{matrix} \tilde{\xi}_N^{\top} \!& T_{nl}(\tilde{z}_1 + \Omega_0) \!& 1 \end{matrix} \right]^{\top}\!\!\!, \\
%		 \!\! \Pi_1 \!\!\!\!& = \pi_2^{\top} \pi_2 - T_{max}^2 \pi_3^{\top} \pi_3, & \Pi_2 \!\!\!\!& =  T_{min}^2 \pi_3^{\top} \pi_3 - \pi_2^{\top} \pi_2, \\ 
%		 \!\! \Pi_3 \!\!\!\!& = - \He( (\pi_1 + \Omega_0 \pi_3)^{\top} \pi_2 ), \\
%		\!\! \pi_1 \!\!\!\!& = \left[ 1, 0_{1, 2(N+1)+3}, 0, 0 \right], & \pi_2 \!\!\!\!& = \left[ 0_{1, 2(N+1)+4}, 1, 0 \right], \\
%		\!\! \pi_3 \!\!\!\!& = \left[ 0_{1, 2(N+1)+4}, 0, 1 \right]. & \\
%	\end{array}
%\]
%\end{lemma}
\begin{lemma} \label{lem:sectorsConditions} For almost all $\tilde{z}_1 \in \R$, the following holds:
\begin{equation} \label{eq:sectorsConditions}
	\begin{array}{ll}
		T_{nl}( \tilde{z}_1 + \Omega_0 )^2 \leq T_{max}^2, & T_{min}^2 \leq T_{nl}(\tilde{z}_1 + \Omega_0)^2, \\
		-2 \left( \tilde{z}_1 + \Omega_0 \right) T_{nl}( \tilde{z}_1 + \Omega_0 ) \leq 0.
	\end{array}
\end{equation}
\end{lemma}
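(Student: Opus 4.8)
The plan is to reduce all three inequalities to elementary monotonicity and sign properties of the scalar map $\theta \mapsto T_{nl}(\theta)$ defined in \eqref{eq:torque}, evaluated at $\theta = \tilde{z}_1 + \Omega_0$. First I would observe that $T_{nl}$ fails to be defined only at $\theta = 0$, that is, at the single point $\tilde{z}_1 = -\Omega_0$; since this is a set of measure zero, it accounts for the \emph{almost all} qualifier in the statement, and from here on I would restrict attention to $\theta \neq 0$.

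For such $\theta$, the sign factor can be pulled out, giving $|T_{nl}(\theta)| = T_{sb}\bigl(\mu_{cb} + (\mu_{sb}-\mu_{cb})e^{-\gamma_b|\theta|}\bigr)$. I would then study the factor $g(\theta) = \mu_{cb} + (\mu_{sb}-\mu_{cb})e^{-\gamma_b|\theta|}$. Since $\mu_{sb} > \mu_{cb} > 0$ and $e^{-\gamma_b|\theta|} \in (0,1]$, the function $g$ decreases monotonically in $|\theta|$, from the supremum $\mu_{sb}$ (approached as $|\theta| \to 0$) down to the infimum $\mu_{cb}$ (approached as $|\theta| \to \infty$). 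Consequently $T_{sb}\mu_{cb} \leq |T_{nl}(\theta)| \leq T_{sb}\mu_{sb}$. Setting $T_{min} = T_{sb}\mu_{cb}$ and $T_{max} = T_{sb}\mu_{sb}$ and squaring yields the first two inequalities of \eqref{eq:sectorsConditions}.

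For the third inequality I would exploit that the $\sign(\theta)$ factor forces $T_{nl}(\theta)$ to share the sign of $\theta$, so that $\theta\, T_{nl}(\theta) = |\theta|\,|T_{nl}(\theta)| \geq 0$; multiplying by $-2$ then gives $-2(\tilde{z}_1+\Omega_0)\,T_{nl}(\tilde{z}_1+\Omega_0) \leq 0$, as required.

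The argument is essentially elementary, so I do not anticipate a genuine obstacle. The only points requiring care are the exclusion of the single undefined point $\theta = 0$, which justifies the almost-everywhere qualifier, and the verification that the constants $T_{min}$ and $T_{max}$ are exactly the infimum and supremum of $|T_{nl}|$, so that the resulting sector bounds are as tight as possible rather than merely valid. These tight bounds are what make the subsequent robust embedding of the nonlinearity effective in the practical stability analysis.
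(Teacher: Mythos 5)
Your proof is correct and is exactly the verification the paper leaves implicit: its proof of this lemma is the single line ``These inequalities can be easily verified using \eqref{eq:torque}'', and your argument spells that out—pulling the $\sign$ factor out of $T_{nl}$, bounding $\mu_{cb} \leq \mu_{cb} + (\mu_{sb}-\mu_{cb})e^{-\gamma_b|\theta|} \leq \mu_{sb}$ by monotonicity, and noting $\theta\,T_{nl}(\theta) \geq 0$, with the excluded point $\tilde{z}_1 = -\Omega_0$ accounting for the almost-everywhere qualifier. No gaps; your identification $T_{min} = T_{sb}\mu_{cb}$, $T_{max} = T_{sb}\mu_{sb}$ matches the tight sector bounds the paper relies on (but never writes down explicitly) in Theorem~\ref{theo:practical}.
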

\begin{proof}
% directly leads to \eqref{eq:sectorsConditions}.
These inequalities can be easily verified using \eqref{eq:torque}.
\end{proof}

These new information are the basis of the following theorem.

\begin{theo} \label{theo:practical} Let $N \in \mathbb{N}$ and $V_{max} > 0$. If there exist $P_N \in \mathbb{S}^{2+2(N+1)}$, $R = \diag(R_1, R_2) \succeq 0, S = \diag(S_1, S_2) \succ 0, Q \in \mathbb{S}^2_+, \tau_0, \tau_1, \tau_2, \tau_3 \geq 0$ such that \eqref{eq:extraCondition} holds together with:
\begin{equation} \label{eq:LMIpractical}
	\begin{array}{l}
		\Xi_N = \bar{\Theta}_{N}  - \tau_0 \Pi_0 - \tau_1 \Pi_1 - \tau_2 \Pi_2 - \tau_3 \Pi_3 \prec 0, \\
		P_N + S_N \succ 0,
	\end{array}
\end{equation}
where 
\[
	\begin{array}{cl}
		\!\! \bar{\Theta}_{N} \!\!\!\!& = \diag( \Theta_N, 0_2 )  \\
		& \hfill- \alpha_2 \He\left( (F_{m1} - T_0 F_{m2} )^{\top} e_1^{\top} P_N \tilde{F}_N \right), \\
		\!\! \tilde{F}_N \!\!\!\!& = \left[ F_N \ 0_{2(N+1)+2, 2} \right], \quad e_1 = \left[ 1 \ 0_{1, 2(N+1)+1} \right]^{\top}\!\!\!, \\
		\!\! F_{m1} \!\!\!\!& =  \left[ 0_{1, 2(N+1) + 4} \ 1 \ 0 \right], \quad F_{m2} = \left[ 0_{1, 2(N+1) + 4} \ 0 \ 1 \right], \\
		\!\! \Pi_0 \!\!\!\!& = V_{max} F_{m2}^{\top} F_{m2} - \tilde{F}_N^{\top} ( P_N + S_N) \tilde{F}_N, \\
		\!\! \Pi_1 \!\!\!\!& = \pi_2^{\top} \pi_2 - T_{max}^2 \pi_3^{\top} \pi_3, \quad \Pi_2 = T_{min}^2 \pi_3^{\top} \pi_3 - \pi_2^{\top} \pi_2, \\
		\!\! \Pi_3 \!\!\!\!& = - \He( (\pi_1 + \Omega_0 \pi_3)^{\top} \pi_2 ), \\
		\!\! \pi_1 \!\!\!\!& = \left[ 1, 0_{1, 2(N+1)+3}, 0, 0 \right], \quad \pi_2 = \left[ 0_{1, 2(N+1)+4}, 1, 0 \right], \\
		\!\! \pi_3 \!\!\!\!& = \left[ 0_{1, 2(N+1)+4}, 0, 1 \right], \\
	\end{array}
\]
and all the parameters defined as in Theorem~\ref{theo:StabLin}, then the equilibrium point $X_{\infty}$ of system~\eqref{eq:problemNonLin} is practically stable. More precisely, equation \eqref{eq:practical} holds for $X_{bound} = \sqrt{V_{max} \varepsilon_1^{-1}}$ where $\varepsilon_1$ is defined in \eqref{eq:epsilon1}. \end{theo}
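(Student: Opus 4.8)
The strategy is to reuse the Lyapunov functional $V_N$ from Theorem~\ref{theo:StabLin}, but now the nonlinear term $T_{nl}(z_1)$ enters the $\dot{Z}$ dynamics. The goal is no longer to show $\dot{V}_N \prec 0$ everywhere, but rather to establish an ultimate bound: trajectories enter and remain in the sublevel set $\{ V_N \leq V_{max} \}$. Since Lemma~\ref{lemma:epsilon} gives $\varepsilon_1 \| \tilde{X} \|_{\mathcal{H}}^2 \leq V_N(\tilde{X})$, membership in this sublevel set translates into the bound $\| \tilde{X} \|_{\mathcal{H}} \leq \sqrt{V_{max} \varepsilon_1^{-1}} = X_{bound}$, which is exactly the claimed $X_{bound}$ in \eqref{eq:practical}. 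So the heart of the matter is to show that $\dot{V}_N < 0$ on the complement of this sublevel set, i.e. whenever $V_N > V_{max}$.

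\textbf{Key steps in order.} First I would recompute $\dot{V}_N$ along trajectories of the nonlinear system~\eqref{eq:problemNonLin}, exactly as in the proof of Theorem~\ref{theo:StabLin}, but carrying the extra nonlinear contribution. The linear part reproduces $\Theta_N$, while the term $B_2 \left[ \begin{smallmatrix} \Omega_0 \\ T_{nl}(z_1) \end{smallmatrix} \right]$ in $\dot{Z}$ produces the additional cross-term $-\alpha_2 \He\big( (F_{m1} - T_0 F_{m2})^\top e_1^\top P_N \tilde{F}_N \big)$ appearing in $\bar{\Theta}_N$; here $F_{m1}, F_{m2}$ select the new coordinates $T_{nl}(z_1)$ and a related quantity appended to the augmented state vector. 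This gives a quadratic form $\dot{V}_N \leq \zeta^\top \bar{\Theta}_N \zeta$ in the extended vector $\zeta = [\tilde{\xi}_N^\top, T_{nl}, \ast]^\top$, modulo the usual Bessel inequality argument handling the integral and projection terms. Second, I would invoke the S-procedure: Lemma~\ref{lem:sectorsConditions} supplies three quadratic constraints $\zeta^\top \Pi_1 \zeta \leq 0$, $\zeta^\top \Pi_2 \zeta \leq 0$, $\zeta^\top \Pi_3 \zeta \leq 0$ that any admissible trajectory must satisfy, encoding $T_{min}^2 \leq T_{nl}^2 \leq T_{max}^2$ and the passivity-like sign condition $-2(\tilde z_1 + \Omega_0) T_{nl} \leq 0$. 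The constraint $\zeta^\top \Pi_0 \zeta \leq 0$ encodes precisely the region $V_N \geq V_{max}$ via \eqref{eq:VNapprox}, namely $V_{max} \leq \tilde{\xi}_N^\top F_N^\top (P_N + S_N) F_N \tilde{\xi}_N \leq V_N$. Multiplying each constraint by a nonnegative multiplier $\tau_i$ and subtracting, the LMI $\Xi_N = \bar{\Theta}_N - \sum_i \tau_i \Pi_i \prec 0$ guarantees $\dot{V}_N < 0$ on $\{ V_N \geq V_{max} \}$.

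\textbf{Concluding the ultimate bound.} Once $\dot{V}_N < 0$ strictly outside the sublevel set, a standard invariance argument shows that $V_N(\tilde{X}(t))$ is eventually trapped below $V_{max}$: if the trajectory starts with $V_N > V_{max}$ it decreases monotonically and, by strict negativity and continuity, must reach $V_N \leq V_{max}$ in finite time; thereafter it cannot escape, since on the boundary $V_N = V_{max}$ the derivative is strictly negative. This yields, for every $\eta > 0$, a time $T_\eta$ after which $V_N \leq V_{max} + o(1)$, hence $\| \tilde{X}(t) \|_{\mathcal{H}} \leq X_{bound} + \eta$, which is the definition of practical stability \eqref{eq:practical}.

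\textbf{Main obstacle.} The delicate point is the algebraic bookkeeping in assembling $\bar{\Theta}_N$: one must correctly append the nonlinear coordinate to the augmented state and verify that the cross-term generated by $B_2$ is captured exactly by $-\alpha_2 \He\big( (F_{m1} - T_0 F_{m2})^\top e_1^\top P_N \tilde{F}_N \big)$, including the shift by the linearization offset $T_0$. A second subtlety is checking that the sector-condition multipliers $\Pi_1, \Pi_2, \Pi_3$ are expressed consistently in the \emph{shifted} variable $\tilde z_1 = z_1 - \Omega_0$ while Lemma~\ref{lem:sectorsConditions} is stated in terms of $\tilde z_1 + \Omega_0$; aligning these conventions through the selectors $\pi_1, \pi_2, \pi_3$ is where an error would most easily creep in. The Bessel-inequality step and the $\varepsilon_1$ estimate carry over verbatim from Theorem~\ref{theo:StabLin}, so those require no new work.
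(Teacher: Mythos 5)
Your proposal follows the paper's proof essentially step for step: the same Lyapunov functional $V_N$, the same augmented vector $\bar{\xi}_N = \left[ \tilde{\xi}_N^{\top} \ \, T_{nl}(\tilde{z}_1+\Omega_0) \ \, 1 \right]^{\top}$ (your unnamed last coordinate is the constant $1$, which is what lets the affine terms be written as quadratic forms), the same cross-term generated by $B_2$ giving $\bar{\Theta}_N$, the same sector conditions from Lemma~\ref{lem:sectorsConditions}, the same Bessel-based encoding of the region $V_N \geq V_{max}$ through $\Pi_0$, and the same S-procedure plus invariance/attractivity argument yielding $X_{bound} = \sqrt{V_{max}\varepsilon_1^{-1}}$. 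The one imprecision — the constraint $\bar{\xi}_N^{\top}\Pi_0\bar{\xi}_N \leq 0$ is only a \emph{sufficient} condition for $V_N \geq V_{max}$ (so negativity of $\dot{V}_N$ is in fact obtained on the smaller set $\{\tilde{Z}_N^{\top}(P_N+S_N)\tilde{Z}_N \geq V_{max}\}$, not on all of $\{V_N \geq V_{max}\}$ as you and the paper both state) — is present in the paper's own proof at exactly the same point, so it is not a deviation from the reference argument.
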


%Some practical consequences of the Theorem~\ref{theo:practical} are given after the proof.

\begin{proof}
%To prove the practical stability, some information about $T_{nl}$ must be introduced. In the case of a nonlinearity described by \eqref{eq:torque}, 
First, let do the same change of variable as before:
\[
	\tilde{X} = X - X_{\infty}.
\]
Since the nonlinearity affects only the ODE part of system~\eqref{eq:problemNonLin}, the difference with the previous part lies in the dynamic of $\tilde{Z}$:
\[
	\begin{array}{rl}
		\dot{\tilde{Z}}(t) \!\!\!\!& = \frac{d}{dt} \left( Z(t) - \left[ \begin{smallmatrix} z_1^{\infty} \\ z_2^{\infty} \end{smallmatrix} \right] \right) \\
		&= A \tilde{Z}(t) + B \left[ \begin{smallmatrix} \tilde{\phi}_t(0,t) \\ \tilde{\phi}_x(1,t) \end{smallmatrix} \right] + B_2 \left[ \begin{smallmatrix} 0 \\ T_{nl}(\tilde{z}_1(t) + \Omega_0) - T_0 \end{smallmatrix} \right].
	\end{array}
\]

Using the same Lyapunov functional as in \eqref{eq:lyap}, the positivity is ensured in the exact same way. For the bound on the time derivative, following the same strategy as before, we easily get:
\begin{equation} \label{eq:lyapDot}
	\begin{array}{rl}
		\!\!\!\!\!\!\!\!\dot{V}_N(\tilde{X}) \!\!\!\!& \leq \tilde{\xi}_N^{\top} \Theta_N \tilde{\xi}_N - 2 \alpha_2 (T_{nl}(\tilde{z}_1 + \Omega_0) - T_0) e_1^{\top} P_N \tilde{Z}_N \\
		&\displaystyle \hfill + \sum_{k=0}^N (2k+1) \tilde{\Chi}_k^{\top} Q \tilde{\Chi}_k - \int_0^1 \tilde{\chi}^{\top}(x) Q \tilde{\chi}(x) dx.
	\end{array}
\end{equation}
We introduce a new extended state variable $\bar{\xi}_N = \left[ \begin{matrix} \tilde{\xi}_N^{\top} & T_{nl}(\tilde{z}_1 + \Omega_0) & 1 \end{matrix} \right]^{\top}$.
%\[
%	\bar{\xi}_N = \left[ \begin{matrix} \tilde{\xi}_N^{\top} & T_{nl}(z_1) & 1 \end{matrix} \right]^{\top}\!\!\!\!.
%\]
Using the notation of Theorem~\ref{theo:practical}, equation~\eqref{eq:lyapDot} rewrites as:
\begin{multline*}
	\dot{V}_N(\tilde{X}) \ \leq \ \bar{\xi}_N^{\top} \bar{\Theta}_N \bar{\xi}_N^{\top} + \sum_{k=0}^N (2k+1) \tilde{\Chi}_k^{\top} Q \tilde{\Chi}_k \\
	- \int_0^1 \tilde{\chi}^{\top}(x) Q \tilde{\chi}(x) dx.
\end{multline*}
It is impossible to ensure $\bar{\Theta}_N \prec 0$ since its last $2$ by $2$ diagonal block is $0_2$. 

We then use the definition of practical stability. We want to show that if there exists $\varepsilon_3 > 0$ such that $\dot{V}_N(\tilde{X}) \leq - \varepsilon_3 \| \tilde{X}\|_{\mathcal{H}}^2$ when $V_N(\tilde{X}) \geq V_{max}$ then the system is practically stable. \\
Let $\mathcal{S} = \left\{ \tilde{X} \in \mathcal{H} \ | \ V_N(\tilde{X}) \leq V_{max} \right\}$, the previous assertion implies that this set is invariant and attractive. 
%Moreover, for any $\varepsilon > 0$, the trajectory of \eqref{eq:problemNonLin} converges in finite time to this set. \\
Using \eqref{eq:epsilon1}, we get that $\left\{ \tilde{X} \in \mathcal{H} \ | \ \| \tilde{X} \|_{\mathcal{H}} \leq X_{bound} = V_{max}^{1/2} \varepsilon_1^{-1/2} \right\} \supseteq \mathcal{S}$, meaning that the system is practically stable.
\\
A sufficient condition to be practically stable is that $V_N$ should be strictly decreasing when outside the ball of radius $V_{max}$. This condition rewrites as $V_N(\tilde{X}) \geq V_{max}$ and the following holds:
\[
	\begin{array}{rl}
		\!\!\!\!V_N(\tilde{X}) - V_{max} \!\!\!\!&\displaystyle \geq \tilde{Z}_N^{\top} P_N \tilde{Z}_N + \int_0^1 \!\! \tilde{\chi}^{\top}(x) S \tilde{\chi}(x) dx - V_{max}\\
		& \displaystyle \geq - \bar{\xi}_N^{\top} \Pi_0 \bar{\xi}_N \geq 0.
	\end{array}
\]
%This statement implies the following with $\tau_0 > 0$:
%\[
%	\begin{array}{rl}
%		\!\!\!\!\dot{V}_N(\tilde{X}) \!\!\!\!&\displaystyle \leq \ \bar{\xi}_N^{\top} \bar{\Theta}_N \bar{\xi}_N^{\top}  - \tau_0 \left( V_N(\tilde{X}) - V_{max} \right) + \mathfrak{B}(\tilde{X}) \\
%		&\displaystyle \leq \bar{\xi}_N^{\top} \bar{\Theta}_N \bar{\xi}_N^{\top}  - \tau_0 \!\left( \tilde{Z}_N^{\top} P_N \tilde{Z}_N +\! \int_0^1 \!\! \tilde{\chi}^{\top}(x) S \tilde{\chi}(x) dx \right) \\
%		&\displaystyle \hfill + \tau_0 V_{max} + \mathfrak{B}(\tilde{X}) \\
%		&\leq \ \bar{\xi}_N^{\top} \left( \bar{\Theta}_N - \tau_0 \Pi_0 \right) \bar{\xi}_N^{\top} + \mathfrak{B}(\tilde{X}). \\
%	\end{array}
%\]
The previous inequality is obtained using Bessel inequality~\eqref{eq:bessel} on $\int_0^1 \tilde{\chi}^{\top}(x) S \tilde{\chi}(x) dx$. Hence, $V_N(\tilde{X}) \geq V_{max}$ if $\bar{\xi}_N^{\top} \Pi_0 \bar{\xi}_N \leq 0$.

Noting that $\tilde{z}_1 = \pi_1 \bar{\xi}_N, T_{nl}(\tilde{z}_1 + \Omega_0) = \pi_2 \bar{\xi}_N$ and $1 = \pi_3 \bar{\xi}_N$, Lemma~\ref{lem:sectorsConditions} rewrites as:
\[
	\forall i \in \{ 1, 2, 3 \}, \quad \bar{\xi}_N \ \Pi_i \ \bar{\xi}_N \leq 0.
\]
Consequently, a sufficient condition to be practically stable is:
\begin{equation} \label{eq:LMI1}
	\forall \bar{\xi}_N \neq 0 \text{ s. t. } \forall i \in [0, 3], \bar{\xi}_N^{\top} \Pi_i \bar{\xi}_N \leq 0, \quad \bar{\xi}_N^{\top} \bar{\Theta}_N \bar{\xi}_N < 0.
\end{equation}
The technique called \textit{S-variable}, explained in \cite{Svariable} for instance, translates the previous inequalities into an LMI condition. Indeed, Theorem 1.1 from \cite{Svariable} shows that condition \eqref{eq:LMI1} is verified if there exists $\tau_0, \tau_1, \tau_2, \tau_3 > 0$ such that 
\[
	\bar{\Theta}_N - \sum_{k=0}^3 \tau_i \Pi_i \prec 0.
\]
Consequently, in a similar way than for Theorem~\ref{theo:StabLin}, condition~\eqref{eq:LMIpractical} implies that $\mathcal{S}$ is an invariant and attractive set for system~\eqref{eq:problemNonLin}. Then, the equilibrium point $X_{\infty}$ of system~\eqref{eq:problemNonLin} is practically stable with $X_{bound} = \sqrt{V_{max} \varepsilon_1^{-1}}$.
\end{proof}

%Using the previous lemma and in a similar way than for Theorem~\ref{theo:StabLin}, the condition~\eqref{eq:LMIpractical} implies that when $\tilde{X} \geq X_{bound}$ then there exists $\varepsilon_3 > 0$ such that $\dot{V}_N(\tilde{X}) \leq - \varepsilon_3 \| \tilde{X} \|_{\mathcal{H}}^2$ and consequently, applying Lemma~\ref{lemma:epsilon}, the trajectories are exponentially going to the set $\tilde{X} \leq X_{bound}$ and $X_{\infty}$ is practically stable.

Note that if the torque function is not perfectly known, one can change the lower and upper bounds $T_{min}$ and $T_{max}$ to get a more conservative result but robust to uncertainties on $T_{nl}$.
%In the case where the smooth approximation \eqref{eq:torqueSmooth} of $T_{nl}$ is considered, the constraint $\bar{\xi}_N^{\top} \Pi_2 \bar{\xi}_N \leq 0$ (for the lower bound) is not satisfied and consequently, we choose $\tau_2 = 0$ to remove it. 
For \eqref{eq:LMIpractical} to be feasible, the constraint $\bar{\xi}_N^{\top} \Pi_1 \bar{\xi}_N \leq 0$ must hold, meaning that an upper bound of $T_{nl}$ needs to be proposed.

%\begin{remark} Setting $\tau_1 = \tau_2 = \tau_3 = 0$ and $\tau_0 > 0$ and by doing the same procedure in the reverse way we get that:
%\[
%	\dot{V}_N(\tilde{X}) + \tau_0 \left( V_N(\tilde{X}) - V_{max} \right) \leq 0.
%\]
%In other words, $\frac{1}{2} \tau_0$ is highly related to the exponential decay-rate of the solution in the linear case. As a consequence, we can get a feasible solution if $\tau_0 \leq 2 \alpha$.
%Then, to get a maximal bound on $\tau_0$, one can use the decay-rate obtained in the linear case.
%\end{remark}

\subsection{On the optimization of $X_{bound}$}

The condition~\eqref{eq:LMIpractical} is a Bilinear Matrix Inequality (BMI) since $\tau_0$, $\tau_1$, $\tau_2$ and $\tau_3$ are decision variables and it is therefore difficult to get its global optimum. Nevertheless, the following lemma gives a sufficient condition for the existence of a solution to this problem.

\begin{coro} \label{coro:practical} There exists $V_{max}$ and $\tau_0 > 0$ such that Theorem~\ref{theo:practical} holds if and only if there exists $N > 0$ such that LMIs \eqref{eq:extraCondition} and \eqref{eq:LMI} are satisfied.

In other words, the equilibrium point of system~\eqref{eq:problemNonLin} is practically stable if and only if the linear system~\eqref{eq:problemLin} is exponentially stable.
\end{coro}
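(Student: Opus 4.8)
The statement is an equivalence between the feasibility of the two families of LMIs, so the plan is to prove it as two implications, keeping the \emph{same} matrices $P_N, R, S, Q$ and the \emph{same} order $N$ in both directions; the final ``in other words'' then follows immediately by chaining this equivalence with Theorem~\ref{theo:StabLin} and Theorem~\ref{theo:practical}. Observe first that condition \eqref{eq:extraCondition} and the constraint $P_N + S_N \succ 0$ are common to both theorems, so the only thing to relate is $\Theta_N \prec 0$ (from \eqref{eq:LMI}) on one side and $\Xi_N \prec 0$ (from \eqref{eq:LMIpractical}) on the other. I would work throughout with the quadratic form $\bar{\xi}_N^{\top} \Xi_N \bar{\xi}_N$ written out in the coordinates $\bar{\xi}_N = [\,\tilde{\xi}_N^{\top}\ a\ b\,]^{\top}$, where $a = F_{m1}\bar{\xi}_N$ is the $T_{nl}$-slot and $b = F_{m2}\bar{\xi}_N$ the constant slot, so that the coupling block reads $-2\alpha_2(a - T_0 b)\,w$ with the \emph{single} scalar functional $w = e_1^{\top} P_N F_N \tilde{\xi}_N$, and no other $(a,b)$-to-$\tilde{\xi}_N$ coupling is present.

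For the direction ``Theorem~\ref{theo:practical} feasible $\Rightarrow$ Theorem~\ref{theo:StabLin} feasible'' I would restrict $\bar{\xi}_N^{\top} \Xi_N \bar{\xi}_N$ to the subspace $\{a = b = 0\}$. On that subspace all the sector contributions $\Pi_1, \Pi_2, \Pi_3$ and the whole coupling term vanish, while $-\tau_0\Pi_0$ reduces to $+\tau_0\,\tilde{Z}_N^{\top}(P_N+S_N)\tilde{Z}_N$; what survives is exactly $\tilde{\xi}_N^{\top}\big(\Theta_N + \tau_0 F_N^{\top}(P_N+S_N)F_N\big)\tilde{\xi}_N$. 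Since $\Xi_N \prec 0$, this must be negative definite, and because $\tau_0 > 0$ and $P_N + S_N \succ 0$ the added term $\tau_0 F_N^{\top}(P_N+S_N)F_N$ is positive semidefinite; a matrix that stays negative definite after adding a positive semidefinite term was already negative definite, so $\Theta_N \prec 0$. This is precisely where the hypothesis $\tau_0 > 0$ is used.

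For the converse I would keep $P_N,R,S,Q$ and set $\tau_2 = \tau_3 = 0$, $\tau_1 > 0$. Writing $\Theta_N' = \Theta_N + \tau_0 F_N^{\top}(P_N+S_N)F_N$, the form becomes $\tilde{\xi}_N^{\top}\Theta_N'\tilde{\xi}_N - 2\alpha_2(a - T_0 b)\,w - \tau_1 a^2 + (\tau_1 T_{max}^2 - \tau_0 V_{max})\,b^2$, whose $(a,b)$-block is $\diag(-\tau_1,\ \tau_1 T_{max}^2 - \tau_0 V_{max})$. I would first pick $\tau_0 > 0$ small enough that $\Theta_N' \prec 0$ (possible by continuity, since $\Theta_N \prec 0$), then $V_{max}$ large enough that $\tau_0 V_{max} > \tau_1 T_{max}^2$ so this $(a,b)$-block is negative definite. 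A Schur complement eliminating $a$ and $b$ then leaves the rank-one corrected block
\[
	\Theta_N' + \Big(\tfrac{\alpha_2^2}{\tau_1} + \tfrac{\alpha_2^2 T_0^2}{\tau_0 V_{max} - \tau_1 T_{max}^2}\Big)\, v v^{\top}, \qquad v = (e_1^{\top} P_N F_N)^{\top},
\]
the decisive point being that both $a$ and $b$ couple to $\tilde{\xi}_N$ only through the same functional $w = v^{\top}\tilde{\xi}_N$, so eliminating two auxiliary variables produces merely a single rank-one correction. Taking $\tau_1$ large and $V_{max}$ large drives the scalar coefficient to $0$; since $\Theta_N' \prec 0$ strictly, the perturbed matrix stays negative definite, giving $\Xi_N \prec 0$.

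The main obstacle is this backward construction: one must choose the three free scalings consistently — $\tau_0$ small (to preserve $\Theta_N' \prec 0$), then $V_{max}$ and $\tau_1$ large (to make the $(a,b)$-block negative and the rank-one correction negligible) — and verify these requirements do not conflict, which they do not because each quantity can be fixed in exactly this order. The structural fact that makes the argument clean is that the nonlinearity slot and the constant slot feed the extended state only through the single row $e_1^{\top} P_N F_N$, so the Schur correction is rank one and is easily dominated by $\Theta_N' \prec 0$. Finally, combining the proven LMI equivalence with Theorem~\ref{theo:practical} (practical stability with $X_{bound} = \sqrt{V_{max}\varepsilon_1^{-1}}$) and Theorem~\ref{theo:StabLin} (exponential stability) yields the announced equivalence between practical stability of \eqref{eq:problemNonLin} and exponential stability of \eqref{eq:problemLin}, for the same order $N$.
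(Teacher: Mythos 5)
Your proof is correct and takes essentially the same route as the paper's: both directions fix the same $P_N,R,S,Q$ and order $N$, set $\tau_2=\tau_3=0$ and eliminate the $(a,b)$-block by a Schur complement (choosing $\tau_0$ small, then $\tau_1$ and $V_{max}$ large with $\tau_0 V_{max}>\tau_1 T_{max}^2$) for sufficiency, and read off the leading diagonal block $\Theta_N+\tau_0 F_N^{\top}(P_N+S_N)F_N\prec 0$, hence $\Theta_N\prec 0$, for necessity. The only difference is cosmetic: you spell out the rank-one structure of the coupling term that the paper abbreviates as $\kappa_{P_N}$.
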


\begin{proof} Note first that expending \eqref{eq:LMIpractical} with $\tau_3 = 0$ leads to:
\begin{equation} \label{eq:extendedLMI}
	{\small \Xi_N\! = \!\left[ \begin{array}{c|c|c} 
		\Theta_{N,\tau_0/2} & %\multirow{2}{*}{$\alpha_2 P_N e_1$} & \multirow{2}{*}{$\alpha_2 T_0 P_N e_1$}
		\multicolumn{2}{|c}{\kappa_{P_N}} \\
		%+ \tau_0 F_N^{\top} (P_N + S_N) F_N  \\
		\hline
		\multirow{3}{*}{$\kappa_{P_N}^{\top}$} & \tau_2 - \tau_1 & 0 \\
		\cline{2-3}
		%\hline
		%\multirow{2}{*}{$\star$} 
		& \multirow{2}{*}{0} &  \tau_1 T_{max}^2 - \tau_2 T_{min}^2 \\
		& & - \tau_0 V_{max}
	\end{array} \right],}
\end{equation}
where $\kappa_{P_N} \in \R^{(4+2(N+1)) \times 2}$ depends only on $P_N$. 

\textit{Proof of sufficiency:} Assume there exists $N > 0$ such that LMIs \eqref{eq:extraCondition} and \eqref{eq:LMI} are satisfied. Considering $\tau_2 = 0$ and using Schur complement on $\Xi_N$, $\Xi_N \prec 0$ is equivalent to:
\begin{equation*}
	 \Theta_{N,\tau_0/2}
	 - \kappa_{P_N} \left[ \begin{matrix} -\frac{1}{\tau_1} & 0 \\ 0 & \frac{1}{\tau_1 T_{max}^2 - \tau_{0} V_{max}} \end{matrix} \right] \kappa_{P_N}^{\top} \prec 0,
\end{equation*}
with $\tau_{0} V_{max} > \tau_1 T_{max}^2$. Since $\Theta_N \prec 0$, considering $\tau_0$ small enough, $\tau_1$ large and $V_{max} > \tau_1 \tau_0^{-1} T_{max}^2$ the previous condition is always satisfied and Theorem~\ref{theo:practical} applies.

\textit{Proof of necessity:} Assume $\Xi_N \prec 0$ and \eqref{eq:extraCondition} holds. Then its first diagonal block must be definite negative. Consequently $\Theta_{N} \prec 0$ and, according to Theorem~\ref{theo:StabLin}, system~\eqref{eq:problemLin} is exponentially stable.
\end{proof}

\begin{remark} Note that \eqref{eq:extendedLMI} provides a necessary condition for Theorem~\ref{theo:practical} which is $\Theta_{N, \tau_0/2} \prec 0$. In other words, $\tau_0$ is related to the decay-rate of the linear system and we get the following condition: $\tau_0 < 2 \mu_{max}$.\end{remark}

Thanks to Corollary~\ref{coro:practical}, the following method should help solving the BMI if Theorem~\ref{theo:StabLin} is verified. Assuming that equations \eqref{eq:LMI} and \eqref{eq:extraCondition} are verified for a given $N \in \mathbb{N}$.
\begin{enumerate}
	\item Fix $\tau_0 = 2 \mu_{max}$ as defined in \eqref{eq:alphamax}.
	\item Check that equations \eqref{eq:LMI}, \eqref{eq:extraCondition} and \eqref{eq:LMIpractical} are satisfied for $V_{max}$ a strictly positive decision variable. If this is not the case, then decrease $\tau_0$ and do this step again. 
	\item Thanks to Corollary~\ref{coro:practical}, there exists a $\tau_0$ small enough for which equations \eqref{eq:LMI}, \eqref{eq:extraCondition} and \eqref{eq:LMIpractical} hold. Freeze this value.
	\item Since the problem is unbounded, it is possible to fix a variable without loss of generality, let $V_{max} = 10^4$ for instance and solve the following optimization problem:
	\[
		\hspace*{-0.6cm}\begin{array}{cl}
			\displaystyle \min_{P_N, S, R, Q, \tau_1, \tau_2, \tau_3, \varepsilon_P} & - \varepsilon_P \\
			\text{subject to} & \begin{array}{l}
				\eqref{eq:LMI}, \eqref{eq:extraCondition} \text{ and } \eqref{eq:LMIpractical}, \\
				P_N + S_N - \diag(\varepsilon_P, 0_{2N+3}) \succ 0, \\
				R \succeq 0, S \succ 0, Q \succ 0.
			\end{array}				
		\end{array}
	\]
	\item Then compute $X_{bound} = \sqrt{V_{max} \varepsilon_1^{-1}}$ where $\varepsilon_1$ is defined in \eqref{eq:epsilon1}.
\end{enumerate}

\section{Examples and Simulations}

The section is devoted to numerical simulations\footnote{Numerical simulations are done using a first order approximation with at least $80$ space-discretization points and $9949$ time-discretization points. Simulations are done using Yalmip \cite{1393890} together with the SDP solver SDPT-3 \cite{toh1999sdpt3}. The code is available at {\textit{https://homepages.laas.fr/mbarreau}}.} and draw some conclusions about the PI regulation. In the first subsection, we focus on the linear system and the second one is dedicated to the nonlinear case.

\subsection{On the linear model}

This subsection recaps the result on the linear model.

\subsubsection{Estimation of the decay-rate}

The main result is a direct application of Corollary~\ref{coro:alpha} for $k_p = 10^{-3}$ and $k_i = 10$. Indeed, using a dichotomy-kind algorithm, Table~\ref{tab:alpha} is obtained. It shows the estimated decay-rate $\mu$ at a given order between $0$ and $6$.

The first thing to note in Table~\ref{tab:alpha} is the hierarchy property, the decay-rate is an increasing function of the degree, as noted in Remark~\ref{rem:hierarchy}. Note also that the gap between order $0$ and order $1$ is significant, showing that using projections indeed improves the results. 

For orders higher than $2$, the estimated decay-rate increases slightly, and, up to a four digits precision, it reaches its maximum value at $N = 6$. Since it is then the maximum allowable decay-rate obtained using equation~\eqref{eq:alphamax}, that tends to show that the Lyapunov functional used in this paper together with condition~\eqref{eq:thetaMu} are sharp and provide a good analysis. Figure~\ref{fig:energyLin} represents a simulation on the linear system and it confirms the same observation. Indeed, one can see that the energy of the system is well-bounded by an exponential curve and that the bound becomes more and more accurate as $N$ increases.
%Up to a four digits precision, at order $1$, the estimated decay-rate is the same as the maximum allowable decay-rate obtained using equation~\eqref{eq:alphamax}. That tends to show that the Lyapunov functional used in this paper and condition~\eqref{eq:coro2} are sharp and provide a rather good analysis.

\begin{table}
	\centering
	\begin{tabular}{c|ccccc}
		Order & $N = 0$\! & $N=1$\! & $N=2$\! & $N=3$\! & $N = 6$\! \\
		\hline
		%$\alpha$ $(\times 10^{-3})$ & $0.16$ & $6.39$ & $6.39$ & $6.39$ & $7.73$
		$\mu$ $(\times 10^{-3})$ & $0.87$ & $4.24$ & $7.31$ & $7.59$ & $7.73$
	\end{tabular}
	\vspace{0.5cm}
	\caption{Estimated decay-rate as a function of the order $N$ used. Note that $\mu_{max} = 7.73 \cdot 10^{-3}$ is calculated using \eqref{eq:alphamax} for $k_p = 10^{-3}$, $k_i = 10$.}
	\label{tab:alpha}
\end{table}

\begin{figure}
	\centering
	\includegraphics[width=8.3cm]{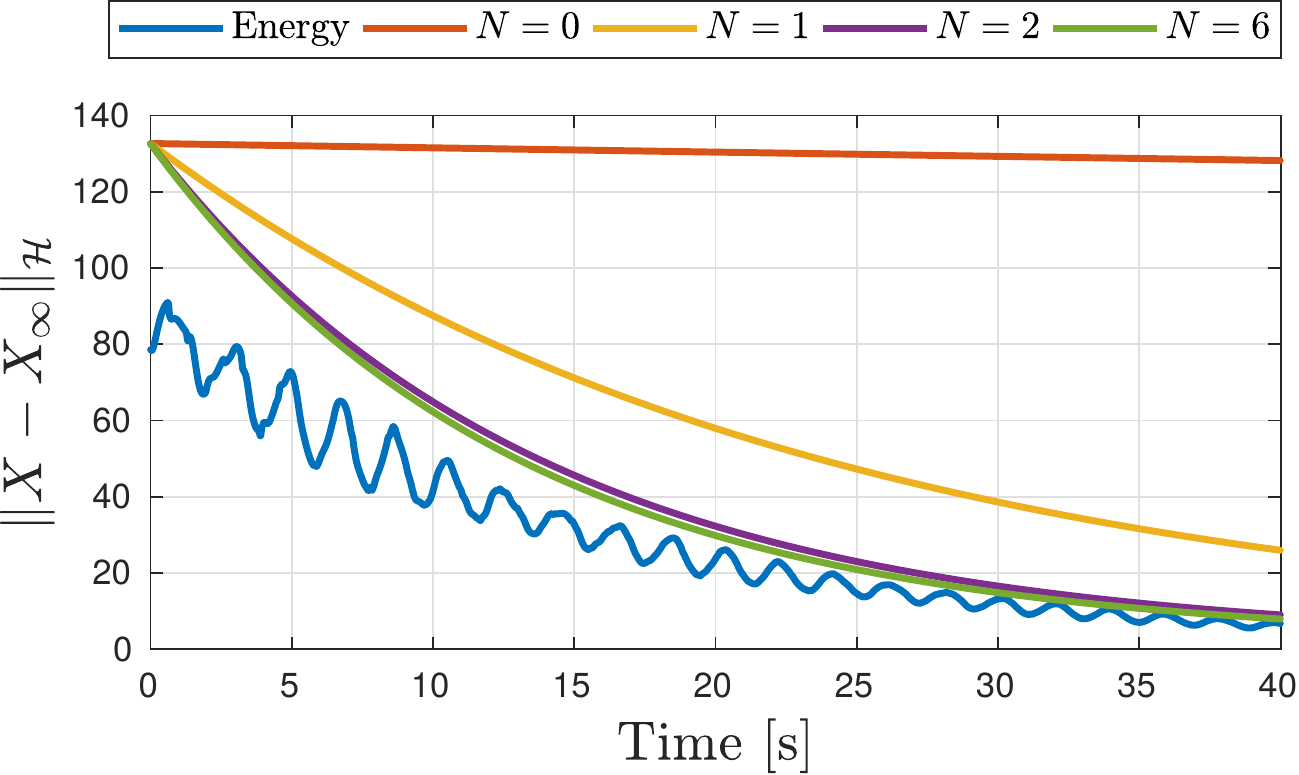}
	\caption{Energy of $X$ with the linear system for $k_p = 10^{-3}$, $k_i = 10$ and $\Omega_0 = 5$. The initial condition is $\phi^0(x) = 4 \left( \int_0^x \phi_x^{\infty}(s) ds + 0.1 \cos(2x) \right)$, $\phi^1 = 2 \Omega_0$ and $Z(0) = 2 \left[ z_1^{\infty} \ z_2^{\infty} \right]^{\top}$.}
	\label{fig:energyLin}
\end{figure} 

\subsubsection{Stability of the closed loop system}

We are interested now in estimating the stability area in terms of the gains $k_p$ and $k_i$ such that the decay-rate of the coupled system is $\mu_{max}$ for an order $N = 5$. That leads to Figure~\ref{fig:kpki} where it is easy to see that increasing the gain $k_p$ decreases the range of possible $k_i$ while it increases its speed (see equation~\eqref{eq:alphamax}). It is quite natural to note that the larger $k_i$ is, the slower the system is, while increasing the proportional gain leads to a faster system. As a conclusion, for small values of $k_p$ and $k_i$, the system is stable and that was the conclusion of the two papers \cite{ATJdrilling,ATJECC} using a different Lyapunov functional. Note that with the previous papers, it was not possible to quantify the notion of ``small enough gains $k_p$ and $k_i$'' while it is possible to give an estimation with the method of this paper.

\begin{figure}
	\centering
	\includegraphics[width=7.5cm]{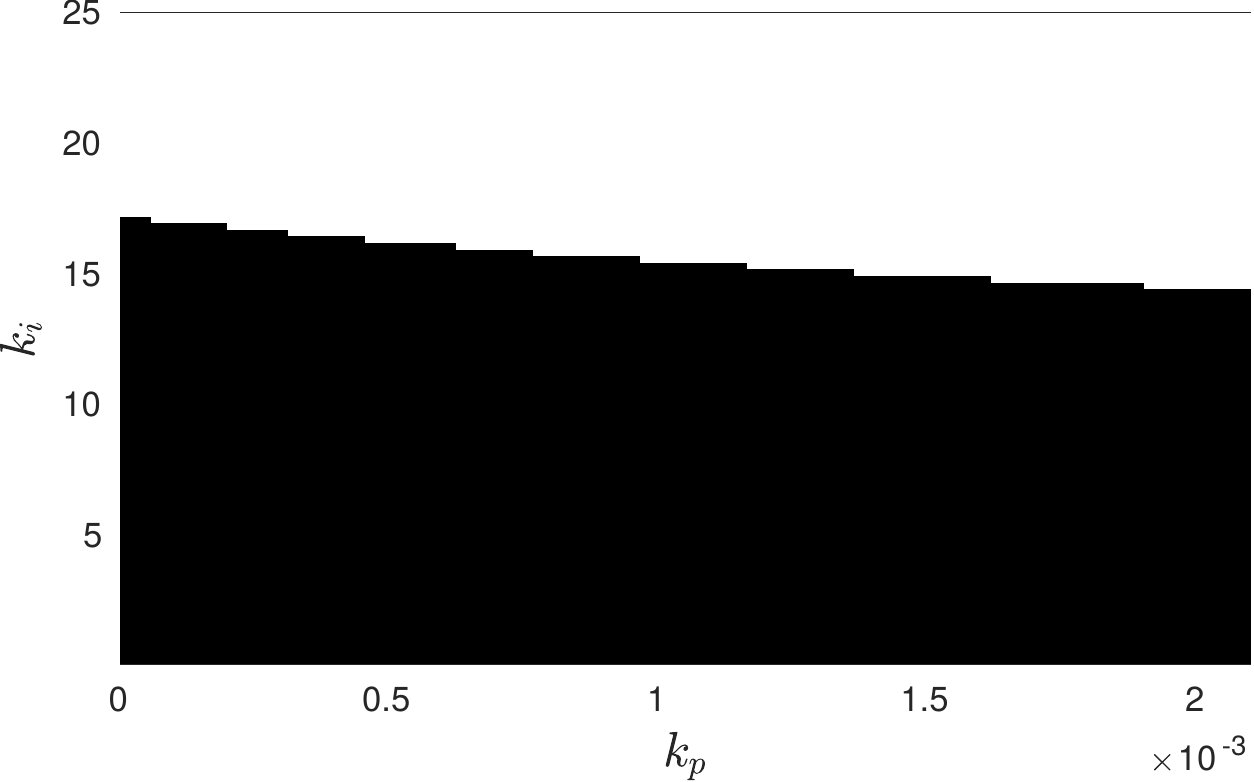}
	\caption{Values of gains $k_p$ and $k_i$ leading to a stable system with the maximum decay-rate for $N = 5$ using Theorem~1. The black area is stable and the white area is said unstable up to an order $5$.}
	\label{fig:kpki}
\end{figure} 

\subsection{The stick-slip effect on the nonlinear model}

The previous subsection shows that the linear model is globally asymptotically stable for some values of gains $k_p$ and $k_i$, so the nonlinear system~\eqref{eq:problemNonLin} is locally asymptotically stable for a large desired angular velocity $\Omega_0$. This can been verified in Figure~\ref{fig:bassinSimu} for $k_p = 10^{-3}$, $k_i = 10$ and $\Omega_0 = 10$. We can see that the linear and the nonlinear systems behave similarly if their initial condition is close to the equilibrium.
%even if the linear system (in this case) seems to converge faster to the equilibrium.

\begin{figure}
	\centering
	\subfloat[Systems~\eqref{eq:problemLin} and \eqref{eq:problemNonLin}: $\Omega_0 = \phi^1 = 10$, $\phi^0(x) = \left( 1 + 0.32 \sin(x) \right) \int_0^x \phi_x^{\infty}(s) ds$ and $Z(0) = \left( z_1^{\infty} \ z_2^{\infty} \right)^{\top}$.]{\includegraphics[width=7.5cm]{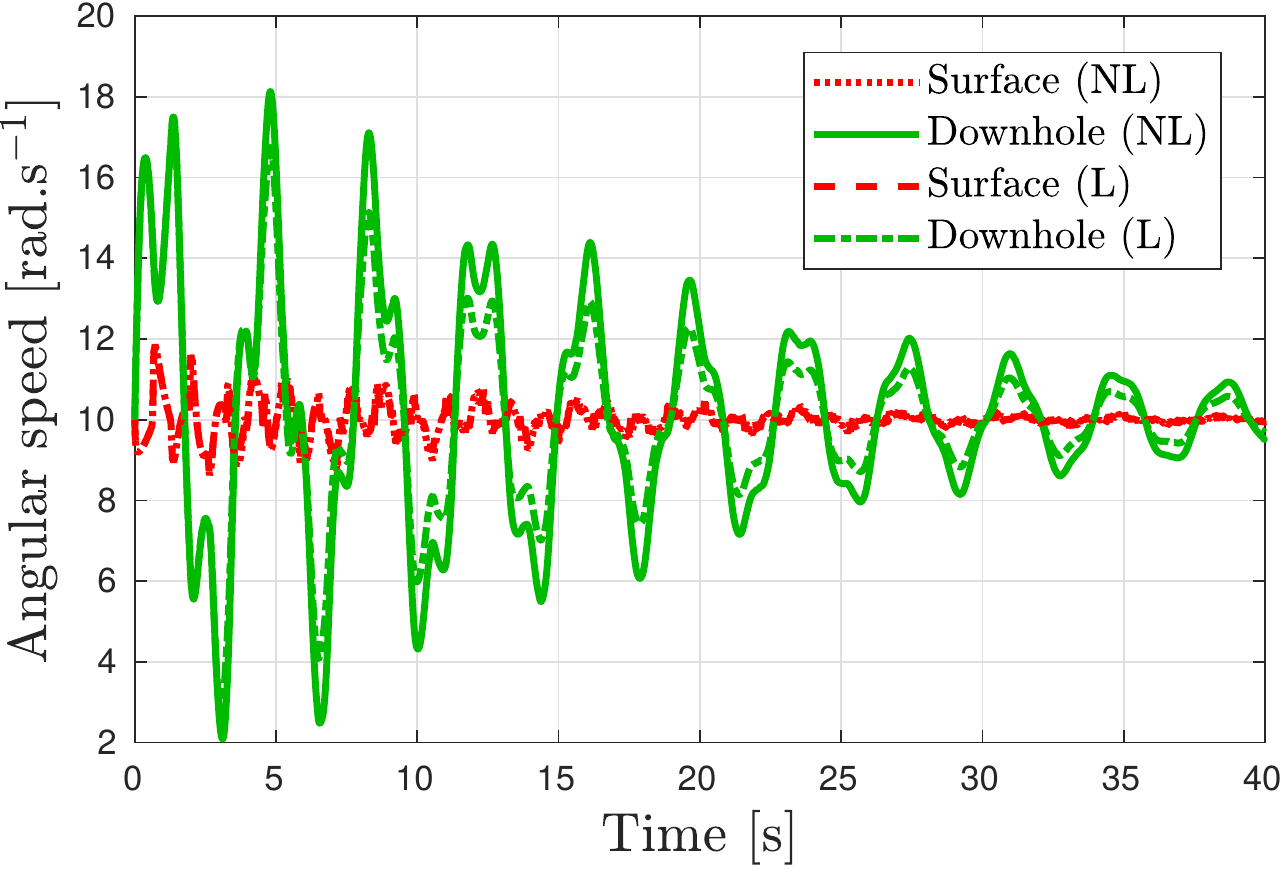} \label{fig:bassinSimu}} \\
	\subfloat[System~\eqref{eq:problemNonLin}: $\Omega_0 = 20$, $\phi^0 = 0$, $\phi^1 = 0$ and $Z(0) = 0_{2,1}$.]{\includegraphics[width=7.5cm]{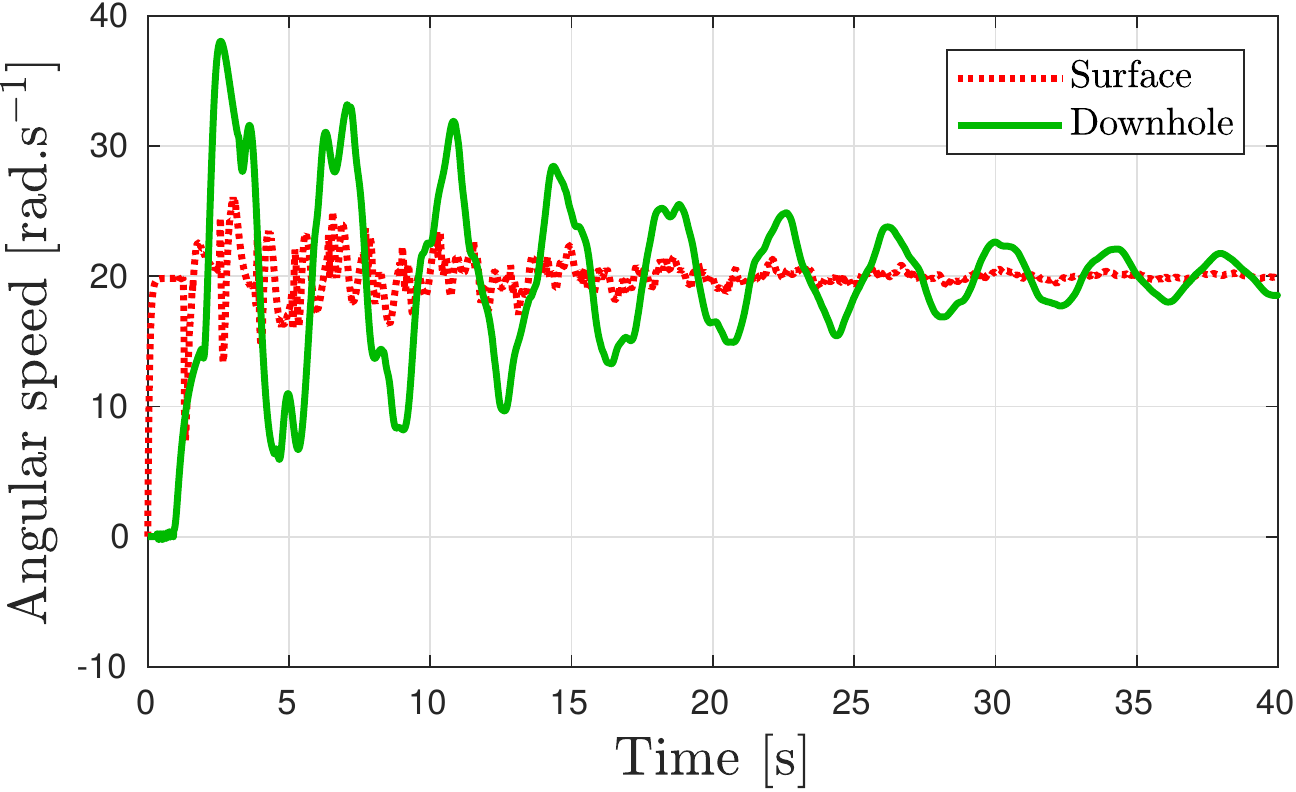} \label{fig:highOmega0}} \\
	\subfloat[System~\eqref{eq:problemNonLin}: $\Omega_0  = \phi^1 = 5$, $\phi^0(x) = \left( 1 + 0.1 \sin(x) \right) \int_0^x \phi_x^{\infty}(s) ds$ and $Z(0) = \left( z_1^{\infty} \ z_2^{\infty} \right)^{\top}$.]{\includegraphics[width=7.5cm]{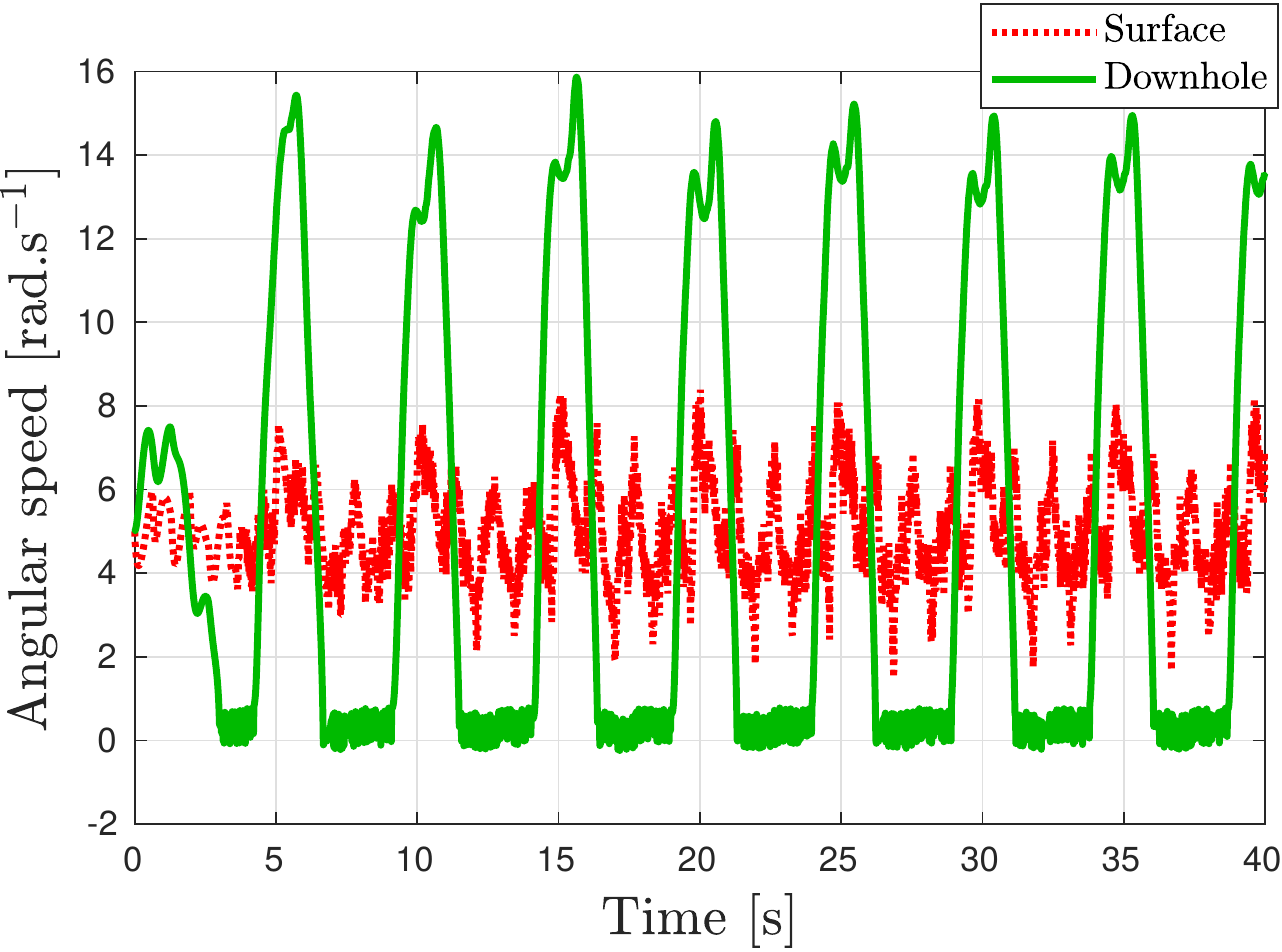} \label{fig:smallOmega0}}
	\caption{Numerical simulations for systems~\eqref{eq:problemLin} and \eqref{eq:problemNonLin} with $k_p = 10^{-3}$, $k_i = 10$.}
	\label{fig:numericalSimulations}
\end{figure} 

The higher $\Omega_0$ is, the larger the basin of attraction is. Indeed, the regulation tries to bring the system into the ``quasi''-linear zone of $T_{nl}$, where it is close to a constant $T_{min}$ as seen in Figure~\ref{fig:torque} and consequently, the stick-slip phenomenon may occur at the beginning but it is not effective for a long time as depicted in Figure~\ref{fig:highOmega0} which is a numerical simulation on the nonlinear model with a zero initial condition.

%\begin{figure}
%	\centering
%	\includegraphics[width=8cm]{}
%	\caption{Simulation for system~\eqref{eq:problemNonLin} with $k_p = 10^{-3}$, $k_i = 10$ and $\Omega_0 = 20$. The initial condition is $\phi^0 = 0$, $\phi^1 = 0$ and $Z(0) = 0_{2,1}$.}	
%\end{figure} 

The real challenge is then the case of low desired angular velocities $\Omega_0$. The result of a simulation on the nonlinear model for $\Omega_0 = 5$ rad.s${}^{-1}$ ($k_p = 10^{-3}$, $k_i = 10$) is displayed in Figure~\ref{fig:smallOmega0}. First of all, note that the oscillations are with a frequency of $0.2$Hz and with an amplitude of roughly $15$ rad.s${}^{-1}$. This is very close to what has been estimated using Figure~\ref{fig:stickslip}. Then, the model presented in Section~\ref{sec:model} seems to be a valid approximation of the real behavior, at least concerning the stick-slip phenomenon.

%\begin{figure}
%	\centering
%	\includegraphics[width=8cm]{}
%	\caption{Stick-slip phenomenon for $\Omega_0 = 5$, $k_p = 10^{-3}$ and $k_i = 10$. The initial condition is . 
	%The maximum downhole velocity is $14.77$ $rad.s^{-1}$.
%	}
%\end{figure} 

\subsection{Practical stability analysis}

Now, one can evaluate the amplitude of the oscillations using Theorem~\ref{theo:practical} with $k_p = 10^{-3}$ and $k_i = 10$. The result for several values of $\tau_0$ and for an order between $0$ and $7$ is depicted in Figure~\ref{fig:xboundOrder}. Note that after order $8$, there are numerical errors in the optimization process and the result is not accurate. The maximum $\tau_0$ is $2 \mu_{max} = 0.0155$ and the higher $\tau_0$ is, the better is the optimization. It appears that for $k_i = 10$, $k_p = 10^{-3}$ and $\Omega_0 = 5$, the optimal $X_{bound}$ is around $28$.
%Notice that no matter the value of $\tau_0$ chosen, when the order increases the obtained value seems to converge around the same value $X_{bound} \simeq 17$ for $k_i = 10$, $k_p = 10^{-3}$ and $\Omega_0 = 5$. That means the effect of choosing $\tau_0$ is erased when increasing the order. This is highly important because the main drawback of Theorem~\ref{theo:practical} was about solving the BMI, while this observation tends to show that no matter the value given to the parameter $\tau_0$ (as long as it is very small), the optimization gives the same local optimum when increasing the order $N$. \\

Figure~\ref{fig:energy} shows the energy of the system as a function of time. One can see that the bound $X_{bound}$ is quite accurate since the error between the maximum of the auto-oscillations and $X_{bound}$ is around $53\%$. Moreover, note that $\max | z_1 - \Omega_0 | = 11.7 = 0.4 X_{bound}$, in other words, nearly half of the oscillations are concentrated in the variable $z_1$, which means the stick-slip mostly acts on the variable $z_1$ and does not affect much the rest of the system. Particularly, it seems very difficult to estimate the variation of $z_1$ knowing only $\phi_t(0,t)$.

The final observation is about the variation of $X_{bound}$ for different $\Omega_0$. This is depicted in Figure~\ref{fig:xboundOmega0}. Up to errors in the numerical optimization, it seems that there is not important variation of $X_{bound}$ when $\Omega_0$ increases. This is counter-intuitive and does not reflect the observations made with Figure~\ref{fig:numericalSimulations}. One explanation is that we didn't state that $T_{nl}$ is a strictly decreasing function for positive $\theta$. 

\begin{figure}
	\centering
	\includegraphics[width=8.3cm]{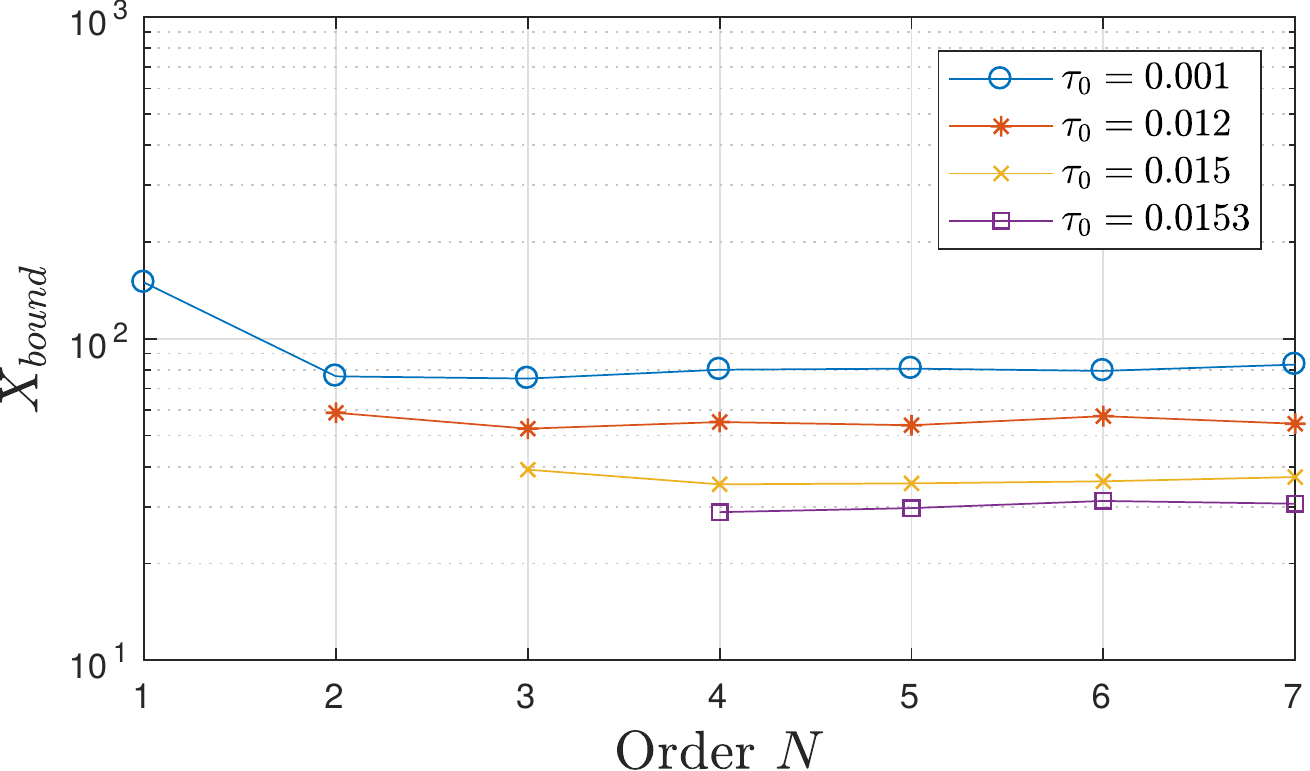}
	\caption{Solution $X_{bound}$ of BMI \eqref{eq:LMIpractical} for $k_p = 10^{-3}$, $k_i =  10$ and $\Omega_0 = 5$ and some values of $\tau_0$. The limit is $X_{bound} = 28$.}
	\label{fig:xboundOrder}
\end{figure} 

\begin{figure}
	\centering
	\includegraphics[width=8.3cm]{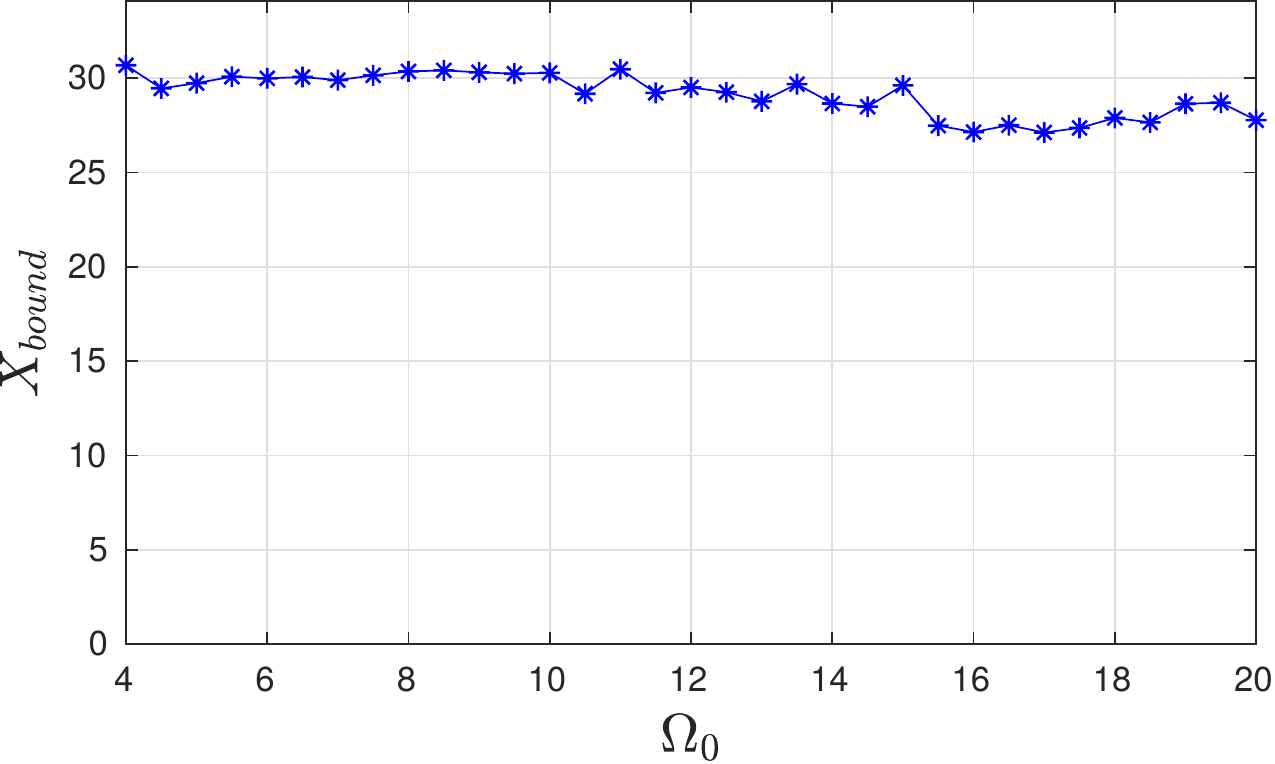}
	\caption{Solution $X_{bound}$ of BMI \eqref{eq:LMIpractical} for $k_p = 10^{-3}$, $k_i =  10$, $\tau_0 = 0.0153$ and $N = 5$.}
	\label{fig:xboundOmega0}
\end{figure} 

\begin{figure}
	\centering
	\includegraphics[width=8.3cm]{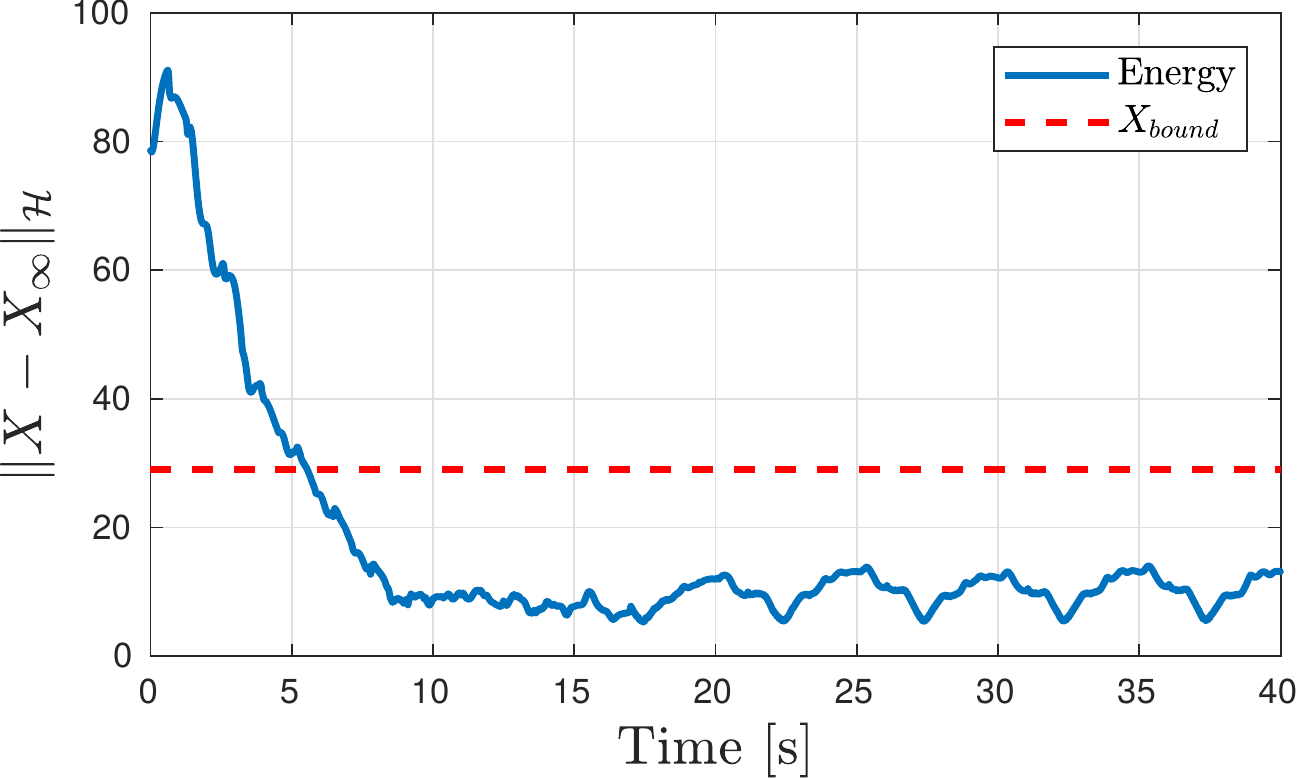}
	\caption{Energy of $X$ with the nonlinear system for $k_p = 10^{-3}$, $k_i = 10$ and $\Omega_0 = 5$. The initial condition is the same than in Figure~\ref{fig:energyLin}.}
	\label{fig:energy}
\end{figure} 

\subsection{Design of a PI controller}

Finally, the problem stated at the beginning of the paper was to find the best PI controller, meaning that it minimizes $X_{bound}$. The plot in Figure~\ref{fig:bestKi} shows that the value of the integral gain $k_i$ does impact the oscillations due to stick-slip since $X_{bound}$ increases from $25$ to $43$ for $k_i \in [0.5, 16]$ and the LMIs become infeasible after this point (these values have been obtained with $\Omega_0 = 5$, $k_p = 10^{-3}$ and $N = 5$). 

To stay robustly stable against small delay in the loop, as noted in Section~\ref{sec:kp}, we should consider $0 \leq k_p \leq 2.1 \cdot 10^{-3}$ which does not offer a large set of choices. For $k_i = 10$, we get Figure~\ref{fig:bestKp}. It appears that the minimum of $X_{bound}$ is obtained for $k_p$ close to $0$. Consequently, increasing the gain $k_p$ seems not to reduce the stick-slip effect. Consequently, even if a PI controller does not weaken the stick-slip effect, the equilibrium point of the controlled system is practically stable. Moreover, it does enable an oscillation around the desired equilibrium point and a local convergence to that point. 

\begin{figure}
	\centering
	\subfloat[$X_{bound}$ for $k_p = 10^{-3}$ but with different values of $k_i$]{\includegraphics[width=8.3cm]{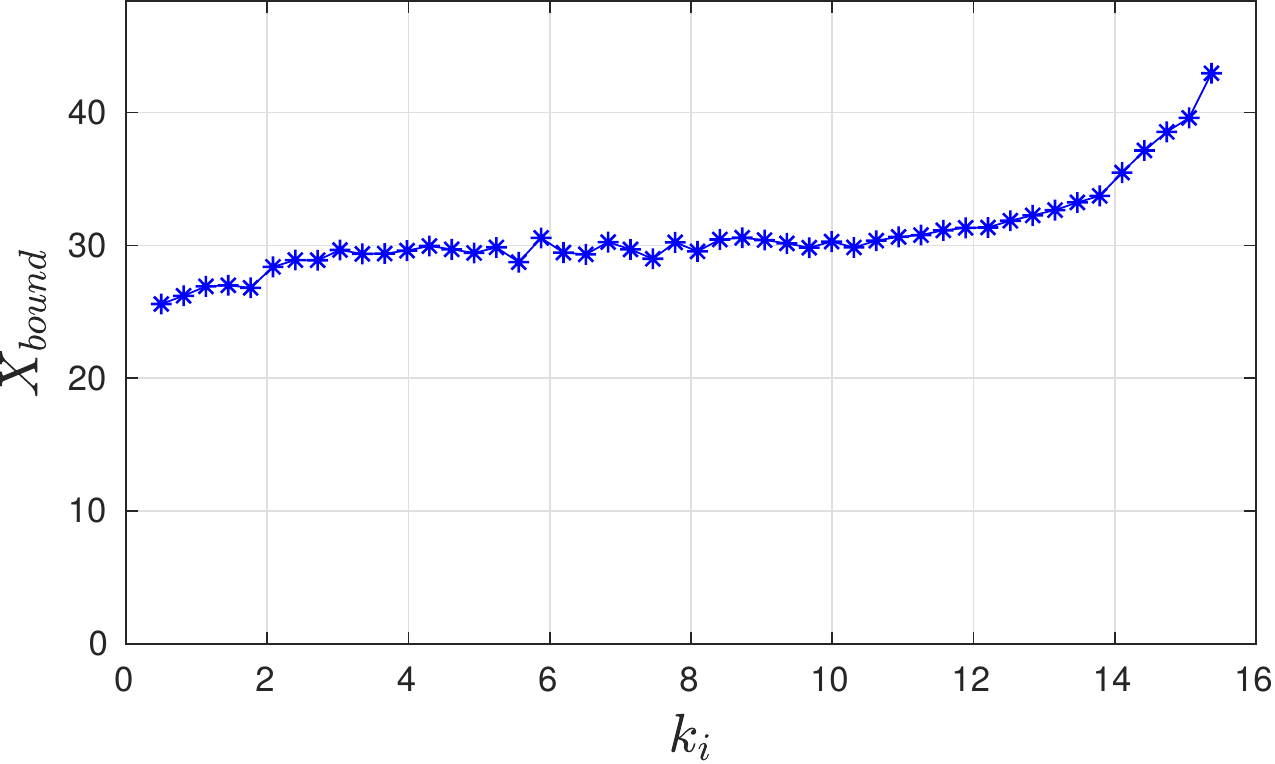} \label{fig:bestKi}}\\
	\subfloat[$X_{bound}$ for $k_i = 10$ but with different values of $k_p$]{\includegraphics[width=8.3cm]{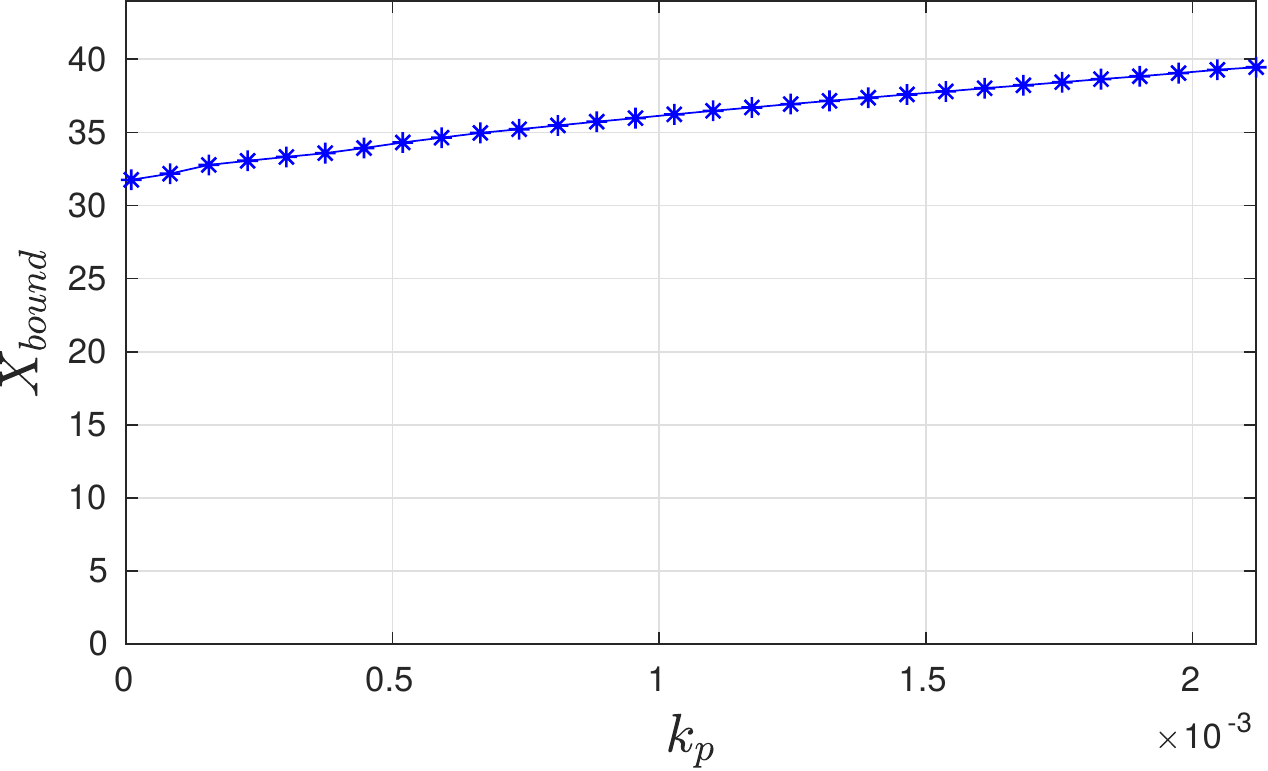} \label{fig:bestKp}}
	\caption{Minimal $X_{bound}$ obtained using Theorem~\ref{theo:practical} with $\Omega_0 = 5$ and $N = 5$.}
\end{figure} 

%\begin{figure}
%	\centering
%	\includegraphics[width=7.5cm]{}
%	\caption{ Minimal $X_{bound}$ obtained using Theorem~\ref{theo:practical} with $\Omega_0 = 5$, $k_i = 10$, $\tau_0 = 0.001$ and $N = 5$.}
%	\label{fig:bestKp}
%\end{figure} 

\section{Conclusion}

This paper focused on the analysis of the performance of a PI controller for a drilling pipe. First, a discussion between existing models in the literature allows us to conclude that the infinite-dimensional model is closer to the real drilling pipe and should then be used for simulations and analysis. Based on this last model, the exponential stability of the closed-loop system was ensured using a Lyapunov functional depending on projections of the infinite-dimensional state on a polynomials basis. This approach enables getting exponential stability of the linear system with an estimation of the decay-rate. This result was then extended to the nonlinear case considering practical stability. The example section shows that it is possible to get an estimate of the smallest attractive and invariant set and that this estimate is close to the minimal one. Further research would focus on similar analysis but using different controllers as the ones developed in \cite{canudasdewit:hal-00394990,navarro2009,hinfty} or in Chapter 10 of \cite{marquez2015analysis}.

\appendix

\subsection{Legendre Polynomials and Bessel Inequality} \label{sec:bessel}

The performance of the Lyapunov functional \eqref{eq:lyap} highly depends on the projection methodology developed in \cite{seuret:hal-01065142}. To help the reader to better understand the proof of Theorem~\ref{theo:StabLin}, a definition and some properties of Legendre polynomials is reminded. More information can be found in \cite{courant1966courant}.

\begin{definition} The orthonormal family of Legendre polynomials $\{ \mathcal{L}_k \}_{k \in \mathbb{N}}$ on $L^2([0,1])$ embedded with the canonical inner product is defined as follows:
\[
	\mathcal{L}_k(x) = (-1)^k \sum_{l = 0}^k (-1)^l \left( \begin{matrix} k \\ l \end{matrix} \right) \left( \begin{matrix} k+l \\ l \end{matrix} \right) x^l,
\]
where $\left( \begin{smallmatrix} k \\ l \end{smallmatrix} \right) = \frac{k!}{l! (k-l)!}$.
 \end{definition}

Their expended formulation is not useful in this paper but the two following lemmas are of main interest.

\begin{lemma} \label{lemma:bessel} For any function $\chi \in L^2$ and symmetric positive matrix $R \in \mathbb S^2_+$, the following Bessel-like integral inequality holds for all $N\in \mathbb N$:
	\begin{equation}\label{eq:bessel}
		\int_{0}^1 \chi^{\top}(x) R \chi(x) dx  \supeq \sum_{k=0}^{N} (2k+1)  \Chi_k^{\top} R \Chi_k
	\end{equation}
where $\Chi_k$ is the projection coefficient of $\chi$ with respect to the Legendre polynomial $\mathcal{L}_k$ as defined in \eqref{eq:chi}.
\end{lemma}

\begin{lemma} \label{lem:Chi_k}
	For any function $\chi \in L^2$ satisfying \eqref{eq:chi}, the following expression holds for any $N$ in $\mathbb N$:
	\begin{equation} \label{eq:deriv}
		\left[ \begin{smallmatrix}  \dot \Chi_0 \\ \vdots \\ \dot \Chi_{N} \end{smallmatrix} \right] = \mathbb 1_N\chi(1) - \bar {\mathbb 1}_N\chi(0) - L_N \left[ \begin{smallmatrix}  \Chi_0 \\ \vdots \\ \Chi_{N} \end{smallmatrix} \right],
	\end{equation}
	where $L_N, \mathbb{1}_N$ and $\bar{\mathbb{1}}_N$ are defined in \eqref{eq:defTheo}. 
\end{lemma}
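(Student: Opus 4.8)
The plan is to treat \eqref{eq:deriv} as a direct consequence of differentiating the projection coefficients $\tilde{\Chi}_k$ under the integral sign and then transferring the spatial derivative onto the Legendre polynomials by an integration by parts. Since each $\mathcal{L}_k$ is independent of $t$, I would first write $\dot{\tilde{\Chi}}_k = \int_0^1 \tilde\chi_t(x)\,\mathcal{L}_k(x)\,dx$ and substitute the transport-with-damping identity \eqref{eq:chi_t}, namely $\tilde\chi_t = \Lambda\tilde\chi_x - \tfrac{\gamma_t}{2}\left[\begin{smallmatrix}1&1\\1&1\end{smallmatrix}\right]\tilde\chi$. This splits $\dot{\tilde{\Chi}}_k$ into a propagation term $\Lambda\int_0^1\tilde\chi_x\,\mathcal{L}_k\,dx$ and a damping term $-\tfrac{\gamma_t}{2}\left[\begin{smallmatrix}1&1\\1&1\end{smallmatrix}\right]\tilde{\Chi}_k$, the latter already being in the desired form.

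For the propagation term I would integrate by parts to obtain $\int_0^1\tilde\chi_x\,\mathcal{L}_k\,dx = \mathcal{L}_k(1)\tilde\chi(1) - \mathcal{L}_k(0)\tilde\chi(0) - \int_0^1\tilde\chi\,\mathcal{L}_k'\,dx$. The proof then hinges on three structural facts about the shifted Legendre family. First, from the explicit formula one reads off the endpoint values $\mathcal{L}_k(1)=1$ and $\mathcal{L}_k(0)=(-1)^k$; these produce the blocks $\Lambda$ and $(-1)^k\Lambda$ that assemble into $\mathbb{1}_N$ and $\bar{\mathbb{1}}_N$ after stacking. Second, since $\mathcal{L}_k'$ has degree $k-1$, it expands on the lower-order basis as $\mathcal{L}_k' = \sum_{j=0}^{k-1}\ell_{k,j}\mathcal{L}_j$; the coefficients follow by orthogonality, $\ell_{k,j} = (2j+1)\int_0^1\mathcal{L}_k'\,\mathcal{L}_j\,dx$, and a further integration by parts together with the orthogonality of $\mathcal{L}_k$ to all lower-degree polynomials collapses this to $\ell_{k,j} = (2j+1)\bigl(1-(-1)^{k+j}\bigr)$ for $j<k$, matching the definition in \eqref{eq:defTheo}. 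Third, using $\int_0^1\tilde\chi\,\mathcal{L}_j\,dx = \tilde{\Chi}_j$ turns $\int_0^1\tilde\chi\,\mathcal{L}_k'\,dx$ into $\sum_{j<k}\ell_{k,j}\tilde{\Chi}_j$.

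Collecting the pieces yields, for each $k$, $\dot{\tilde{\Chi}}_k = \Lambda\tilde\chi(1) - (-1)^k\Lambda\tilde\chi(0) - \Lambda\sum_{j=0}^{k-1}\ell_{k,j}\tilde{\Chi}_j - \tfrac{\gamma_t}{2}\left[\begin{smallmatrix}1&1\\1&1\end{smallmatrix}\right]\tilde{\Chi}_k$. Stacking these identities for $k=0,\dots,N$ and reading the right-hand contributions column-wise reproduces exactly $\mathbb{1}_N\tilde\chi(1)$, $-\bar{\mathbb{1}}_N\tilde\chi(0)$ and $-L_N[\tilde{\Chi}_0^\top\cdots\tilde{\Chi}_N^\top]^\top$, where the $\ell_{k,j}\Lambda$ entries fill the block lower-triangular part of $L_N$ and the damping term supplies its diagonal $\left[\begin{smallmatrix}1&1\\1&1\end{smallmatrix}\right]$ blocks. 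I expect the only delicate step to be the second Legendre fact, the verification of the derivative-expansion coefficients $\ell_{k,j}$, since this is where the specific algebraic form of the basis enters and where a sign slip is easiest to make; the endpoint evaluations and the integrations by parts are otherwise routine.
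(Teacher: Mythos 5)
Your proposal is correct and follows essentially the same route as the paper: differentiate $\tilde{\Chi}_k$ under the integral, substitute the dynamics \eqref{eq:chi_t}, integrate by parts, and invoke the endpoint values $\mathcal{L}_k(0)=(-1)^k$, $\mathcal{L}_k(1)=1$ together with the derivative expansion $\mathcal{L}_k' = \sum_{j<k}\ell_{k,j}\mathcal{L}_j$. The only difference is that the paper simply cites \cite{courant1966courant} for the two Legendre properties, whereas you verify the expansion coefficients $\ell_{k,j}=(2j+1)\bigl(1-(-1)^{k+j}\bigr)$ yourself via orthogonality and a second integration by parts, which makes the argument self-contained (and your verification is correct).
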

\begin{proof} This proof is highly inspired from \cite{besselString}. Since $\chi \in L^2$ satisfies \eqref{eq:chi}, equation~\eqref{eq:chi_t} can be derived and the following holds:
\[
	\dot{\Chi}_k = \Lambda \left[ \chi(x) \mathcal{L}_k(x) \right]_0^1 - \Lambda \int_0^1\chi(x) \mathcal{L}'(x) dx - \frac{\gamma_t}{2} \left[ \begin{matrix} 1 & 1 \\ 1 & 1 \end{matrix} \right] \Chi_k.
\]
As noted in \cite{courant1966courant}, the interesting properties of Legendre polynomials are stated below:
\begin{enumerate}
	\item the boundary conditions on $\mathcal{L}_k$ ensures $\mathcal{L}_k(0) = (-1)^k$ and $\mathcal{L}_k(1) = 1$;
	\item the derivation rule for Legendre polynomials is $\frac{d}{dx} \mathcal{L}_k(x) = \sum_{j = 0}^k \ell_{j,k} \mathcal{L}_j(x)$.
\end{enumerate} 
These two properties lead to the proposed  result in equation~\eqref{eq:deriv}.
\end{proof}

\bibliographystyle{plain}
\bibliography{report_draft}

\begin{IEEEbiography}[{\includegraphics[width=1in,height=1.25in,clip,keepaspectratio]{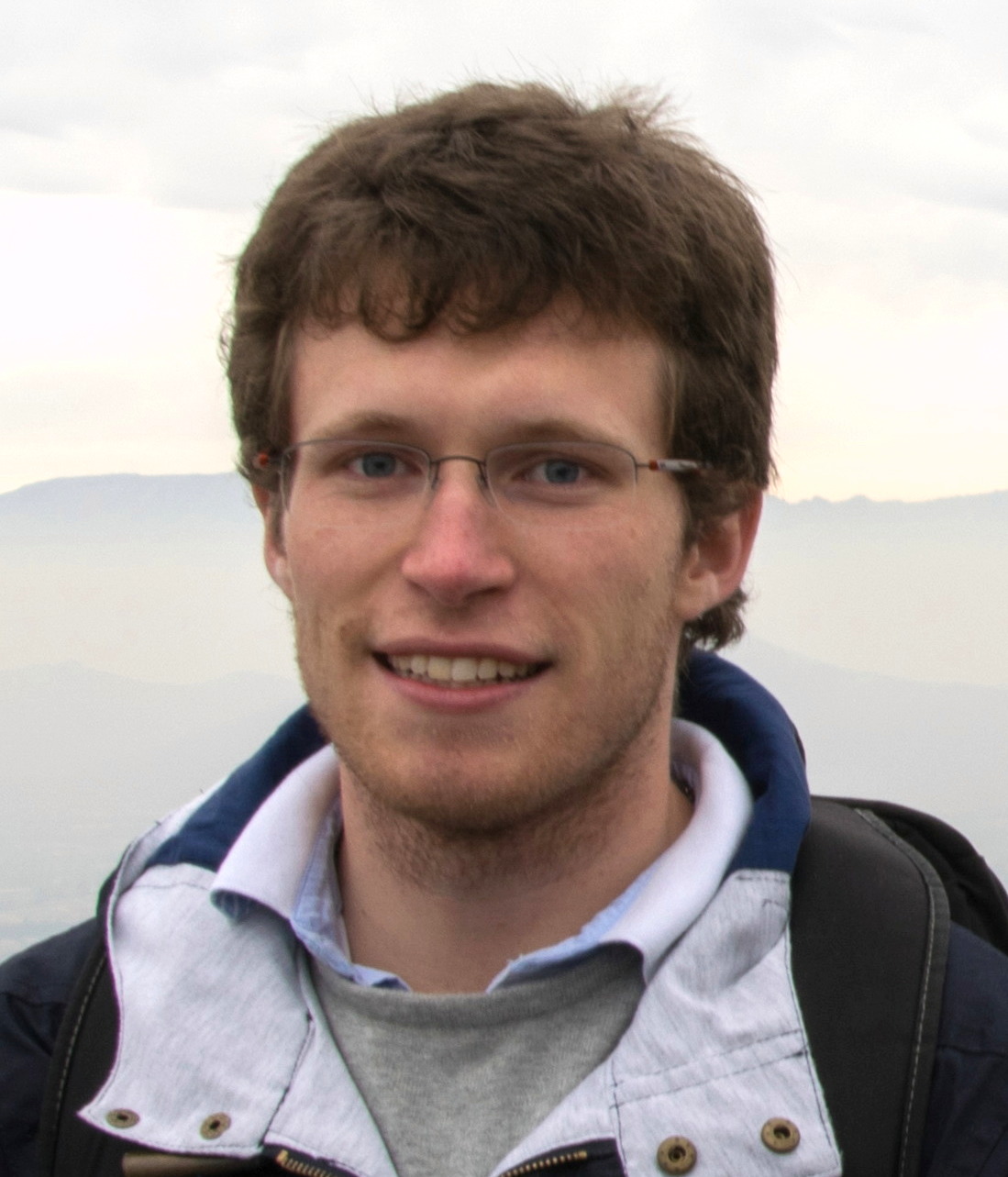}}]{M. Barreau} got the Engineer’s degree in aeronautical engineering from ISAE-ENSICA (Toulouse, France) and a double degree from KTH in Space Engineering with specialization in system engineering in 2015. He was a Ph.D. student in LAAS-CNRS `Laboratoire d'Analyse et d'Architecture des Systèmes'' in Toulouse, France under the supervision of A. Seuret and F. Gouaisbaut. He received the ``Diplôme de Doctorat'' in System Theory in 2019 from Université Paul Sabatier, Toulouse. His research interests include stability analysis of infinite dimension systems with a particular focus on drilling systems.
\end{IEEEbiography} 
\begin{IEEEbiography}[{\includegraphics[width=1in,height=1.25in,clip,keepaspectratio]{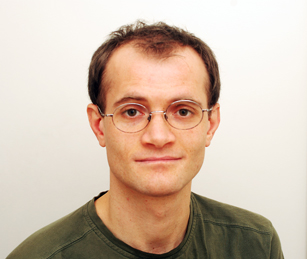}}]{F. Gouaisbaut} was born in Rennes (France) in April 26,1973. He received the ‘‘Diplôme d’Ingénieur’’ in automatic control (Engineers’ degree) from Ecole Centrale de Lille, France, in September 1997 and the ‘‘Diplôme d’Etudes Approfondies’’ (Masters’ Degree) in the same subject from the University of Science and Technology of Lille, France, in September 1997. From October 1998 to October 2001 he was a Ph.D. student at the Laboratoire d’Automatique, Génie Informatique et Signal (LAGIS) in Lille, France. He received the ‘‘Diplôme de Doctorat’’ (Ph.D. degree) from Ecole Centrale de Lille and University of Science and Technology of Lille, France, in October 2001. Since October 2003, he is an associate professor at the Paul Sabatier University (Toulouse). His research interests include time delay systems, quantized systems and robust control.
\end{IEEEbiography} 
\begin{IEEEbiography}[{\includegraphics[width=1in,height=1.25in,clip,keepaspectratio]{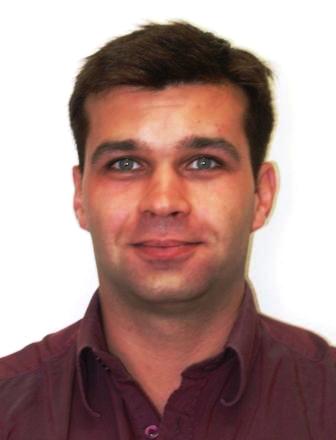}}]{A. Seuret} was born in 1980, in France. He earned the Engineer’s degree from the ‘Ecole Centrale de Lille’ (Lille, France) and the Master’s Degree in system theory from the University of Science and Technology of Lille (France) in 2003. He received the Ph.D. degree in Automatic Control from the ‘Ecole Centrale de Lille’ and the University of Science and Technology of Lille in 2006. From 2006 to 2008, he held one-year postdoctoral positions at the University of Leicester (UK) and the Royal Institute of Technology (KTH, Stockholm, Sweden). From 2008 to 2012, he was a junior CNRS researcher (Chargé de Recherche) at GIPSA-Lab in Grenoble, France. Since 2012, he has been with the ‘Laboratoire d’Architecture et d’analyse des Systèmes’’ (LAAS), in Toulouse, France as a junior CNRS researcher. His research interests include time-delay systems, networked control systems and multi-agent systems.
\end{IEEEbiography} 

\end{document}